\theoremstyle{plain}
\newtheorem{theorem}{Theorem}[section]
\newtheorem{cor}{Corollary}[section]
\newtheorem{lemma}{Lemma}[section]
\theoremstyle{definition}
\newtheorem{remark}{Remark}
\newcommand{\G}{\mathbb G}
\newcommand{\Om}{\Omega}
\long\def\symbolfootnote[#1]#2{\begingroup
\def\thefootnote{\fnsymbol{footnote}}\footnote[#1]{#2}\endgroup}
\numberwithin{equation}{section}
\numberwithin{equation}{section}
\begin{document}
\title[Subelliptic  nonlocal Brezis-Nirenberg problems] {Subelliptic  Nonlocal Brezis-Nirenberg Problems on Stratified Lie  Groups}
\author{Sekhar Ghosh, Vishvesh Kumar and Michael Ruzhansky}
\address[ Sekhar Ghosh]{Department of Mathematics, National Institute of Technology Calicut, Kozhikode, Kerala, India - 673601}
\email{sekharghosh1234@gmail.com / sekharghosh@nitc.ac.in}
\address[Vishvesh Kumar]{Department of Mathematics: Analysis, Logic and Discrete Mathematics, Ghent University, Ghent, Belgium}
\email{vishveshmishra@gmail.com / vishvesh.kumar@ugent.be}

\address[Michael Ruzhansky]{Department of Mathematics: Analysis, Logic and Discrete Mathematics, Ghent University, Ghent, Belgium\newline and \newline
School of Mathematical Sciences, Queen Marry University of London, United Kingdom}
\email{michael.ruzhansky@ugent.be}

\thanks{{\em 2020 Mathematics Subject Classification: } 35R03, 35H20, 22E30, 35J20, 35R11}

\keywords{Fractional $p$-sub-Laplacian, Stratified Lie groups, Variational methods, Brezis-Nirenberg problems, Critical nonlinearity, Subcritical nonlinearity}

 \vskip -1cm  \hrule \vskip 1cm \vspace{-8pt}

\begin{abstract} 
In this paper, we investigate the subelliptic nonlocal  Brezis-Nirenberg problem on stratified Lie groups involving critical nonlinearities, namely,
\begin{align*}
    (-\Delta_{\mathbb{G}, p})^s u&= \mu |u|^{p_s^*-2}u+\lambda h(x, u) \quad \text{in}\quad \Omega, \\
u&=0\quad  \text{in}\quad \mathbb{G}\backslash \Omega,
\end{align*}
where $(-\Delta_{\mathbb{G}, p})^s$ is the fractional $p$-sub-Laplacian on a stratified Lie group $\mathbb{G}$ with homogeneous dimension $Q,$ $\Omega$ is an open bounded subset of $\mathbb{G},$ $s \in (0,1)$,  $\frac{Q}{s}>p\geq2,$ $p_s^*:=\frac{pQ}{Q-ps}$ is subelliptic fractional Sobolev critical exponent,  $\mu, \lambda>0$ are real parameters and $h$ is  a lower order perturbation of the critical power $|u|^{p_s^*-2}u$. Utilising direct methods of the calculus of variation, we establish the existence of at least one weak solution for the above problem under the condition that the real parameter $\lambda$ is sufficiently small. Additionally, we examine the problem for $\mu = 0$, representing subelliptic nonlocal equations on stratified Lie groups depending on one real positive parameter and involving a subcritical nonlinearity. We demonstrate the existence of at least one solution in this scenario as well. We emphasize that the results obtained here are also novel for $p=2$ even for the Heisenberg group. 
\end{abstract}
\dedicatory{ {To the memory of Professor Ha\"im Brezis, 1944--2024.}}
 \maketitle 
{ \textwidth=4cm \hrule}
\tableofcontents

\section{Introduction}
\setcounter{theorem}{0}\setcounter{lemma}{0}\setcounter{definition}{0}\setcounter{proposition}{0}\setcounter{remark}{0}

	 Recently, significant attention has been directed towards nonlocal subelliptic operators, including the fractional sub-Laplacian, on the Heisenberg group and, more generally, on stratified Lie groups. These operators are noteworthy for their intriguing theoretical structures and practical applications. It is practically impossible to provide a comprehensive list of references, but we refer to \cite{FF15, FF2015, PP22, GKR22, KD20, GKR23, MMPP23, GT21, GLV, K20, WD20, FMPPS18, FMMT15, WN19, RT16, FZ24, PP24, PP22, Pic22, FMMT15, RT20} and references therein.  
  Motivated by the above studies, in this paper we consider the following nonlinear subelliptic nonlocal equation involving critical nonlinearities: 
  \begin{equation} \label{pro1intro}
      \begin{cases} 
(-\Delta_{\mathbb{G}, p})^s u= \mu |u|^{p_s^*-2}u+\lambda h(x, u) \quad &\text{in}\quad \Omega, \\
u=0\quad & \text{in}\quad \mathbb{G}\backslash \Omega,
\end{cases}
  \end{equation}
where $(-\Delta_{\mathbb{G}, p})^s$ is the fractional $p$-sub-Laplacian on the stratified Lie group $\mathbb{G}$ with homogeneous dimension $Q,$ $\Omega$ is a open bounded subset of $\mathbb{G},$ $s \in (0,1)$,  $\frac{Q}{s}>p\geq2,$ $p_s^*:=\frac{pQ}{Q-ps}$ is subelliptic fractional Sobolev critical exponent,  $\mu \geq 0, \lambda>0$ are real parameters and $h$ is a subcritical nonlinearity satisfying \begin{equation} \label{growthintro}
        |h(x, t)| \leq a_1+a_2|t|^{q-1}\quad \text{for}\,\,\text{a.e.}\,\,x\in \Omega,\,\, \text{and for all } t \in \mathbb{R},
    \end{equation}
    for some $a_1, a_2>0$ and $q \in [1, p_s^*).$ The operator $(-\Delta_{\mathbb{G}, p})^s$ is defined as 
    \begin{equation}
        (-\Delta_{\mathbb{G}, p})^su(x):= C(Q,s, p)\,\,  P.V. \int_{{\mathbb{G}} } \frac{|u(x)-u(y)|^{p-2}(u(x)-u(y))}{\left|y^{-1} x\right|^{Q+p s}} d y, \quad x \in {\mathbb{G}},
    \end{equation}
    with $|\cdot|$ being a homogenous norm on the stratified Lie group $\G,$ $C(Q,s, p)$ is the positive normalization constant depending only on the homogeneous dimension $Q$ of $\G$, $s \in (0, 1)$ and $p \in (1, \infty).$ Here, the symbol $P. V.$ stands for the Cauchy principal value. Due to the nature of problem \eqref{pro1intro}, we will analyse it in two separate cases: first, when $\mu>0$, and second, when $\mu=0$. The case $\mu>0$ is generally more intricate due to the presence of the critical exponent in the nonlinearity and the absence of compactness phenomena of Sobolev embedding for the fractional $p$-sub-Laplacian on stratified Lie groups. Problems of the type \eqref{pro1intro} are commonly referred to in the literature as Brezis-Nirenberg type problems, stemming from their origin in the seminal paper of Brezis and Nirenberg \cite{BN83} for the Laplace operator on $\mathbb{R}^n$ in 1983.
    
    Let us briefly review some studies on the Brezis-Nirenberg problem in the Euclidean setting.  Brezis and Nirenberg \cite{BN83} considered  the following nonlinear elliptic partial differential critical equation:
\begin{equation} \label{EucBN}
    \begin{cases}
        &-\Delta u = |u|^{2^*-2} u+\lambda u \quad \text{in} \,\, \Omega,\\&
        u=0 \quad \text{on} \,\, \partial \Omega
    \end{cases} 
\end{equation} for $n\geq 3$ and $2^*:= \frac{2n}{n-2},$ where $\Omega$ is a smooth bounded domain of $\mathbb{R}^n$ and $\lambda$ is a real parameter. They established that if $n \geq 4$, then problem \eqref{EucBN} admits a positive solution for $\lambda \in (0, \lambda_1)$, where $\lambda_1$ denotes the first eigenvalue of $(-\Delta)$ in $H^1_0(\Omega)$. In the three-dimensional Euclidean space, they infer the existence of a constant $\lambda_* \in (0, \lambda_1)$ such that for any $\lambda \in (\lambda_*, \lambda_1)$, problem \eqref{EucBN} possesses a positive solution. Furthermore, they demonstrate that equation \eqref{EucBN} has a positive solution if and only if $\lambda \in (\lambda_1/4, \lambda_1)$ when $\Omega$ is a ball. Conversely, for $\lambda \notin (0, \lambda_1)$, it can be observed, using the Poho$\check{\text{z}}$aev identity, that problem \eqref{EucBN} has no positive solution. Capozzi et al. \cite{CFP85} established that for $n \geq 4$, equation \eqref{EucBN} admits a nontrivial solution for every parameter $\lambda$.

The primary motivation for investigating the Brezis-Nirenberg problem \eqref{EucBN}  arises from its resemblance to variational problems in differential geometry and physics. Notably, the celebrated {\it Yamabe problem} on Riemannian manifolds is a well-known example that is directly related to the Brezis-Nirenberg problem \eqref{EucBN}. Specifically, 

{\it Given an $n$-dimensional compact Riemannian manifold $(M, g)$, where $n \geq 3$, with scalar curvature $k:=k(x)$, the Yamabe problem seeks to find a metric $\tilde{g}$ conformal to $g$ with constant scalar curvature $\tilde{k}$.}

In fact, by setting $\tilde{g}= u^{\frac{4}{n-2}} g,$ where $u>0$ is the conformal factor, we can formulate the Yamabe problem in the form of the following nonlinear critical equation involving the Laplace-Beltrami operator $\Delta_M$ on $(M, g):$ 
\begin{equation}
    -\frac{n-1}{n-2} \Delta_M u = \tilde{k} u^{2^*-1}- k(x) u.
\end{equation}
Additionally, the Brezis-Nirenberg problem \eqref{EucBN} is linked to the existence of extremal functions for functional inequalities and the existence of non-minimal solutions for Yang-Mills functions and $H$-system (see \cite{BN83}). Problem \eqref{EucBN} has been extensively studied in many different settings involving different local and nonlocal operators, e.g. $p$-Laplacian and fractional $p$-Laplacian.  For instance, in \cite{PP14}, Palatucci and Pisante obtained the concentration-compactness principle on a bounded domain for the fractional Laplacian and as an application, the existence of a weak solution is established to a subcritical problem. The main tools used are an improved Sobolev inequality combined with the Caffarelli-Silvestre extension.  Mosconi et al. \cite{MPSY16} studied the Brezis-Nirenberg problem for fractional $p$-Laplacian on a bounded domain. Here, the authors obtained the existence of a nontrivial solution to the critical Brezis-Nirenberg problem. The authors established the existence results with an appropriate upper bound for the mountain pass level for the corresponding energy functional along with the optimal decay for the extremals of the Sobolev constant. In \cite{MM17}, Mawhin and Molica Bisci studied the problem \eqref{pro1intro} in the context of Euclidean space $\mathbb{R}^n$ associated with the fractional $p$-Laplacian. They proved the existence of at least one weak solution, provided that the parameter $\lambda$ is sufficiently small. The results obtained here are routed from direct methods from the calculus of variations without using the concentration-compactness principle of Lions \cite{Lions85}. The Brezis-Nirenberg problem for the fractional $p$-Laplacian in the entire space $\mathbb{R}^n$ has been investigated in \cite{BSS18}, employing the concentration-compactness principle of Lions \cite{Lions85} and precise knowledge of the decay of optimal $p$-Sobolev extremizers. For a concise development on nonlocal Brezis-Nirenberg problems, we refer  \cite{BS15, MRS16, DNK23, GLPY21, HBV16, MM17, SER13, SER14, SERV05, SERV13, Shen24, WS22} and the references therein.

 Now, let us discuss the motivation for considering the Brezis-Nirenberg type problem on stratified Lie groups. 
In the realm of CR geometry, the {\it CR Yamabe problem} was first explored in the seminal work by Jerison and Lee \cite{JL}.  The CR Yamabe problem is: 

{\it Given a compact, strictly pseudoconvex CR manifold, find a choice of contact form for which
the pseudohermitian scalar curvature is constant.} 

The Heisenberg group, the simplest example of a stratified Lie group, plays a similar role in CR geometry as the Euclidean space in conformal geometry. Consequently, the analysis on stratified Lie groups has proven to be a fundamental tool in resolving the CR Yamabe problem. Consequently, substantial interest has been devoted to studying subelliptic PDEs on stratified Lie groups. As one can also expect, this is also related to determining the extremals for the Sobolev inequality on the Heisenberg group due to Folland and Stein \cite{FS82} and this has been investigated by Jerison and Lee \cite{JL89}. We also refer to \cite{RTY20} for a description of best constants of higher order Sobolev inequalities associated with a Rockland operator on graded Lie group, and to  \cite{GKR23} for best constants of fractional Sobolev inequalities on stratified Lie groups and related subelliptic variational problems. In recent times, researchers have delved into the fractional CR Yamabe problem and related problems \cite{GQ13, CW17, FMMT15, KMW17, K20, CHY21, DGN07}.

On another note, the importance of nilpotent Lie groups in deriving sharp subelliptic estimates for differential operators on manifolds was highlighted in the seminal paper by Rothschild and Stein \cite{RS76}. The Rothschild-Stein lifting theorem demonstrates that a general H\"ormander's sums of squares of vector fields on manifolds can be approximated by a sup-Laplacian on some stratified Lie group (see also \cite{F77} and \cite{Roth83}). As a result, it becomes crucial to study partial differential equations on stratified Lie groups, leading to numerous interesting and promising works that merge Lie group theory with the analysis of partial differential equations. In recent decades, there has been a rapid growth of interest in sub-Laplacians on stratified Lie groups due to their relevance not only in theoretical settings but also in practical applications, such as mathematical models of crystal materials and human vision (see \cite{C98} and \cite{CGS04}).

For the sub-Laplacian on stratified Lie group, the Brezis-Nirenberg problem 
\begin{equation} \label{BNLocal}
    \begin{cases}
        -\Delta_{\G} u=|u|^{2^*-2} u+\lambda u \quad &\text{in} \quad \Omega, \\ u=0 \quad &\text{in} \quad \partial\Omega,
         \end{cases}
\end{equation}
where $2^*=2Q/(Q-2)$ is the Folland-Sobolev critical exponent, was studied by  Loiudice \cite{L07} extending the Euclidean results in \cite{BN83}. She proved that \eqref{BNLocal} has at least a positive solution in $S^1_0(\Omega)$  for any $0<\lambda<\lambda_1,$ where $\lambda_1$ is the first eigenvalue of $-\Delta_{\G}$ in $S^1_0(\Omega).$ It is also known that for $\lambda \geq \lambda_1,$ there is no positive solution of \eqref{BNLocal}. The problem \eqref{BNLocal} was studied by Citti \cite{Citti95} in the setting of the Heisenberg group using the explicit form of minimizers of subelliptic Sobolev inequality due to Jerison and Lee \cite{JL89}. However, in the context of Carnot groups, the explicit form of the Sobolev extremals is not known, posing a challenge for applying the Brezis-Nirenberg approach. To address this, Loiudice \cite{L07} utilised the exact behaviour of any extremal function of the Folland-Stein Sobolev inequality by means of the deep analysis performed by Bonfiglioli and Uguzzoni \cite{BU04}. Here, it is worth noting that Garofalo-Vassilev \cite{GV00} proved that the best constant in the Folland–Stein embedding on $\G$ is attained at some function using the concentration-compactness principle of Lions \cite{Lions85}. Indeed, the concentration-compactness (CC) principle is one of the most extensively used tools in addressing the lack of compactness. It allows for proving the existence of a minimum for corresponding energy functional as the weak limit of a minimizing sequence.   In 2017, Molica Bisci and Repov$\check{\text{s}}$ \cite{BR17} investigated  \eqref{BNLocal} using direct variational methods and proved the existence of one weak solution in the Folland-Stein space $S^1_0(\Omega)$ without any use of concentration-compactness techniques, provided that the
parameter $\lambda$ is sufficiently small. They employed a weakly lower semicontinuity technique previously used in the literature for studying quasilinear equations involving critical nonlinearities in the Euclidean setting, as seen in \cite{FF15, Squ04}. We refer to several other interesting papers \cite{Lan03, GL92, BPV22, MBF16, MM07, BFP20a} and references therein dealing with subelliptic nonlinear equations on stratified Lie groups. 

The problem \eqref{BNLocal} was subsequently extended to the $p$-sub-Lapalcian on stratified Lie group by Loiudice  \cite{Loi22} using the knowledge of the exact rate of decay of the $p$-Sobolev extremals on stratified Lie groups investigated in  \cite{L19}. Specifically, she proved that the problem \begin{equation} \label{BNpLocal}
    \begin{cases}
        -\Delta_{p, \G} u=|u|^{p^*-p} u+\lambda |u|^{p-2} u \quad &\text{in} \quad \Omega, \\ u=0 \quad &\text{in} \quad \partial\Omega,
         \end{cases}
\end{equation} admits a positive solution for any $\lambda \in (0, \lambda_1)$ for $1<p<Q,$ where $\lambda_1$ is the first eigenvalue of $-\Delta_{p, \G}$ in $\Omega.$ On the other hand, Pucci and Temperini \cite{PT22} studied the entire solutions of quasi-linear equation involving $p$-sub-Laplacian on the Heisenberg group $\mathbb{H}^n.$ In \cite{H20}, the author considered the integral type Brezis-Nirenberg problems on the Heisenberg group. Utilizing the sharp Hardy-Littlewood-Sobolev inequalities on the Heisenberg group \cite{FL}, the author obtained nonexistence and existence results for the problem. 

Recently, there have been plenty of research activities dealing with  a general class on the nonlinear and nonlocal operators on the Heisenberg group or in general stratified Lie groups, namely, the fractional $p$-sub-Laplacian, $(-\Delta_{\mathbb{G}, p})^s$ defined by 
\begin{equation}
        (-\Delta_{\mathbb{G}, p})^su(x):= C(Q, s, p)  P.V. \int_{{\mathbb{G}} } \frac{|u(x)-u(y)|^{p-2}(u(x)-u(y))}{\left|y^{-1} x\right|^{Q+p s}} d y, \quad x \in {\mathbb{G}}.
    \end{equation}
We refer to a non-exhausted list of papers \cite{FF15, FF2015, PP22, GKR22, KD20, GKR23, MMPP23, FZ24, GT21, GLV, K20, WD20, FMPPS18, FMMT15, WN19, RT16,RT20, PP24, PP22, Pic22} and references therein for recent developments in this context. For $p=2$, the operator corresponds to the fractional sub-Laplacian $(-\Delta_\G)^s$, which has been extensively studied and found connections with various areas of mathematics in several intriguing papers \cite{BF13,RT16,FMMT15,GLV,FMPPS18,K20,RT20}, along with references cited therein. This operator, initially defined for $u \in C_c^\infty(\G)$, is given by:
\begin{equation}\label{fracl}
(-\Delta_\G)^su(x):= \lim_{\epsilon \rightarrow 0} \int_{\G \backslash B(x, \epsilon)} \frac{|u(x)-u(y)|}{|y^{-1}x|^{Q+2s}} d y = C(Q, s)\,\, P.V. \int_{\G} \frac{|u(x)-u(y)|}{|y^{-1}x|^{Q+2s}} d y.
\end{equation}
It is known that for an $H$-type group, the operator $(-\Delta_\G)^s,$ $s \in (0, \frac{1}{2}),$ is a multiple of the pseudo-differential operator defined as:
\begin{equation}
\mathcal{L}_s:=2^s (-\Delta_z)^{\frac{s}{2}} \frac{\Gamma (-\frac{1}{2}\Delta_\G (-\Delta_z)^{-\frac{1}{2}}+\frac{1+s}{2}) }{\Gamma (-\frac{1}{2}\Delta_\G (-\Delta_z)^{-\frac{1}{2}}+\frac{1-s}{2})},
\end{equation}
where $-\Delta_z$ is the positive Laplacian in the center of the $H$-type group $\G$, and $-\Delta_\G$ is the sub-Laplacian on $\G$. For further details, we refer to \cite{BF13,RT16,RT20}. It is noteworthy that $\mathcal{L}_s$ is a ``conformal invariant" operator and holds significance in CR geometry (see \cite{FMMT15}). Finally, we highlight that the operator $(-\Delta_\G)^s$ does not coincide with the standard fractional power $-\Delta_{\G}^s$ of the sublaplacian $-\Delta_{\G}$ in the Heisenberg group or in general, $H$-type groups, for any value of $s \in (0, 1)$, which is defined as:
$$(- \Delta_{\G}^su)(x):=-\frac{s}{\Gamma (1-s)}\int_0^\infty \frac{1}{t^{1+s}}(H_tu(x)-u(x))\,d t, $$
where $H_t:=e^{-t \Delta_\G}$ is the heat semigroup constructed by Folland \cite{F75}. It is also known that $$\lim_{s \rightarrow 1^-} (-\Delta_\G)^s u = -\Delta_\G u $$ for all $u \in C_0^\infty(\G),$ see \cite[Proposition 1]{PP22}. In a recent paper, Garofalo and Tralli \cite{GT21} computed explicitly the fundamental solutions of nonlocal operators $(-\Delta_\G)^s$ and $(- \Delta_{\G}^su)$ and proved some intertwining formulas on $H$-type groups.  

 In \cite{GKR22}, we investigated an eigenvalue problem for $(-\Delta_{\mathbb{G}, p})^s$ on fractional Folland-Stein-Sobolev spaces $X_0^{s, p}(\G)$ on $\G$ using the variational method. The main challenge in dealing with the operator $(-\Delta_{\mathbb{G}, p})^s$ lies in its definition. We need to manage both its nonlocal and nonlinear nature, as well as the non-Euclidean geometry of stratified Lie groups. When $p\neq 2,$ many classical tools that have been adapted to the sub-Riemannian case in the linear setup, such as the Dirichlet-to-Neumann map in \cite{FMMT15, GT21} and the approach via the non-commutative Fourier representation, are not readily applicable. Dealing with nonlocal operators typically involves considering a tail-type contribution to precisely control the inherent long-range interactions. Consequently, in \cite{GKR22}, we introduced a nonlocal tail within the framework of stratified Lie groups. It is worth mentioning that a similar concept within the context of the Heisenberg group was independently introduced by Palatucci and Piccinini \cite{PP22}, around the same time. We refer to \cite{MMPP23, Pic22, GLV} for applications of the nonlocal tail concept to deal with different kinds of problems in the Heisenberg group as well as in the general context of homogeneous groups. For the study of the regularity theory of subelliptic nonlocal equations involving $(-\Delta_{\mathbb{G}, p})^s,$ one may consult recent articles \cite{MMPP23, PP22, Pic22, FZ24} and references therein.  In \cite{GKR22}, we also have established the compactness of fractional Folland-Stein-Sobolev embeddings on $\G$ which serve as a crucial tool to study nonlocal equations. For further exploration of the best constants of such fractional Folland-Stein-Sobolev inequalities, we recommend consulting \cite{GKR23}. 
 
 Continuing this line of research on nonlocal subelliptic PDEs, in this paper we consider the Brezis-Nirenberg problem
\eqref{pro1intro} associated with the fractional $p$-sub-Laplacian $(-\Delta_{\mathbb{G}, p})^s$ on $\G.$

We state the first main result of this paper. 

\begin{theorem} \label{thm1.1}
    Let $\Omega$ be a bounded open subset of a stratified Lie group $\G$ with the homogeneous dimension $Q.$ Let $s \in (0, 1),$ $\frac{Q}{s}> p\geq 2$ and $q \in [1,p_s^*),$ where $p_s^*=\frac{pQ}{Q-ps}.$ Let $h:\Omega \times \mathbb{R} \rightarrow \mathbb{R}$ be a Carath\'eodory function with respect to Haar measure $dx$ on $\G$, which means, $h$ is Haar measurable in the first variable and continuous in the second variable,  such that 
    \begin{equation} \label{growth}
        |h(x, t)| \leq a_1+a_2|t|^{q-1}\quad \text{for}\,\,\text{a.e.}\,\,x\in \Omega,\,\, \forall t \in \mathbb{R},
    \end{equation}
    for some $a_1, a_2>0.$ Then, for every $\mu>0,$ there exists $\Lambda_\mu>0$ such that the following nonlocal problem
    \begin{equation} \label{pro1}
       \begin{cases} 
(-\Delta_{\mathbb{G}, p})^s u= \mu |u|^{p_s^*-2}u+\lambda h(x, u) \quad &\text{in}\quad \Omega \\
u=0\quad & \text{in}\quad \mathbb{G}\backslash \Omega
\end{cases},
    \end{equation} admits at least one (real-valued) weak solution $u_\lambda \in X^{s,p}_0(\Omega)$ for every $0<\lambda<\Lambda_\mu$ which is a local minimum of the energy functional 
    \begin{equation} \label{funcintro}
        \mathfrak{I}_{\mu, \lambda}(u):= \frac{1}{p} \iint_{\G \times \G} \frac{|u(x)-u(y)|^p}{|y^{-1} x|^{Q+sp}} d x d y -\frac{\mu}{p_s^*} \int_{\Omega} |u(x)|^{p_s^*} d x-\lambda \int_{\Omega} \int_0^{u(x)} h(x, \tau)d \tau,
    \end{equation} for every $u \in X^{s,p}_0(\Omega).$
\end{theorem}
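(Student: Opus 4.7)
The plan is to use the direct method of the calculus of variations, producing the weak solution $u_\lambda$ of \eqref{pro1} as a local minimum of the $C^1$ energy functional $\mathfrak{I}_{\mu,\lambda}$, whose critical points coincide with weak solutions of \eqref{pro1}. The key obstruction is that $\mathfrak{I}_{\mu,\lambda}$ is not globally weakly lower semicontinuous, because the critical embedding $X^{s,p}_0(\Omega) \hookrightarrow L^{p_s^*}(\Omega)$ is not compact. The strategy is to localise the minimisation on a suitably small closed ball $\overline{B_\rho} := \{u \in X^{s,p}_0(\Omega) : [u]_{s,p} \leq \rho\}$ for the Gagliardo norm $[u]_{s,p}^p := \iint_{\G\times\G}|u(x)-u(y)|^p|y^{-1}x|^{-Q-sp}\,dx\,dy$, on which the subelliptic Folland--Stein Sobolev inequality forces the coercive $p$-power to dominate the critical term, letting the Brezis--Lieb lemma salvage weak lower semicontinuity.

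Concretely, I would first fix $\rho = \rho(\mu, Q, s, p) > 0$ so small that every $v \in X^{s,p}_0(\Omega)$ with $[v]_{s,p} \leq 2\rho$ satisfies $\frac{1}{p}[v]_{s,p}^p - \frac{\mu}{p_s^*}\|v\|_{L^{p_s^*}(\Omega)}^{p_s^*} \geq 0$; this is possible since $p_s^* > p$ and $\|v\|_{L^{p_s^*}} \leq S^{-1/p}[v]_{s,p}$, with $S$ the best subelliptic Sobolev constant on $\G$. Then, given a sequence $u_n \rightharpoonup u$ in $X^{s,p}_0(\Omega)$ with $u_n \in \overline{B_\rho}$, the Brezis--Lieb lemma yields
\begin{align*}
[u_n]_{s,p}^p &= [u]_{s,p}^p + [u_n-u]_{s,p}^p + o(1), \\
\|u_n\|_{L^{p_s^*}}^{p_s^*} &= \|u\|_{L^{p_s^*}}^{p_s^*} + \|u_n-u\|_{L^{p_s^*}}^{p_s^*} + o(1),
\end{align*}
while the compact embedding $X^{s,p}_0(\Omega) \hookrightarrow L^q(\Omega)$ for $q < p_s^*$ established in \cite{GKR22}, combined with \eqref{growth}, produces $\int_\Omega H(x,u_n)\,dx \to \int_\Omega H(x,u)\,dx$ where $H(x,t) := \int_0^t h(x,\tau)\,d\tau$. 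Since $[u_n - u]_{s,p} \leq 2\rho$ eventually, the choice of $\rho$ makes the residual $\frac{1}{p}[u_n-u]_{s,p}^p - \frac{\mu}{p_s^*}\|u_n-u\|_{L^{p_s^*}}^{p_s^*}$ non-negative in the limit, whence $\liminf_n \mathfrak{I}_{\mu,\lambda}(u_n) \geq \mathfrak{I}_{\mu,\lambda}(u)$. By reflexivity of $X^{s,p}_0(\Omega)$ (valid since $p \geq 2$), the ball $\overline{B_\rho}$ is weakly sequentially compact, so $\mathfrak{I}_{\mu,\lambda}$ attains its infimum on $\overline{B_\rho}$ at some $u_\lambda$.

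To promote $u_\lambda$ to a local minimum on the whole space, it suffices to show $[u_\lambda]_{s,p} < \rho$. Tightening the choice of $\rho$ slightly (still depending only on $\mu, Q, s, p$) so that $\frac{1}{p}[u]_{s,p}^p - \frac{\mu}{p_s^*}\|u\|_{L^{p_s^*}}^{p_s^*} \geq \frac{\rho^p}{2p}$ for $[u]_{s,p} = \rho$, and using the continuous embeddings $X^{s,p}_0(\Omega) \hookrightarrow L^r(\Omega)$ for $r \in \{1,q\}$ together with the estimate $|H(x,t)| \leq a_1|t| + \frac{a_2}{q}|t|^q$, one obtains
\begin{equation*}
\inf_{[u]_{s,p} = \rho} \mathfrak{I}_{\mu,\lambda}(u) \geq \frac{\rho^p}{2p} - \lambda(c_1 \rho + c_2 \rho^q).
\end{equation*}
Defining $\Lambda_\mu > 0$ so that the right-hand side is strictly positive for every $0 < \lambda < \Lambda_\mu$, and recalling $\mathfrak{I}_{\mu,\lambda}(0) = 0$, forces the minimiser $u_\lambda$ to lie strictly inside $\overline{B_\rho}$. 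Hence $u_\lambda$ is a local minimum of $\mathfrak{I}_{\mu,\lambda}$ on $X^{s,p}_0(\Omega)$, and thus a weak solution of \eqref{pro1}. The main obstacle throughout is restoring weak lower semicontinuity in the presence of the critical exponent; the small-ball threshold dictated by the subelliptic Sobolev constant $S$, together with the Brezis--Lieb splitting, achieves precisely this, bypassing any use of Lions' concentration-compactness principle on $\G$.
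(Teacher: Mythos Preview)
Your argument is correct and follows a genuinely more direct route than the paper. Two points of divergence are worth highlighting.

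For the weak lower semicontinuity on $\overline{B_\rho}$, you invoke the Brezis--Lieb lemma for \emph{both} the Gagliardo seminorm and the critical $L^{p_s^*}$ norm, obtaining the exact splitting $[u_n]_{s,p}^p - [u_n-u]_{s,p}^p \to [u]_{s,p}^p$ after passing to an a.e.\ convergent subsequence. The paper instead relies on Lindqvist's elementary inequality $|b|^p - |a|^p \geq p|a|^{p-2}a(b-a) + 2^{1-p}|a-b|^p$ for $p\geq 2$ to compare $[u_n]_{s,p}^p$ with $[u]_{s,p}^p$ (Lemma~\ref{weaksemiconti}), reserving Brezis--Lieb only for the $L^{p_s^*}$ term. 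Your route is cleaner and yields a sharper residual (no factor $2^{1-p}$), at the cost of needing Brezis--Lieb applied to $L^p(\G\times\G, |y^{-1}x|^{-Q-sp}dx\,dy)$, which is perfectly legitimate once a.e.\ convergence in $\G$ (hence in $\G\times\G$) is secured via the compact embedding into $L^p(\Omega)$.

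For the interior location of the minimiser, you simply compare the boundary infimum with the value $\mathfrak{I}_{\mu,\lambda}(0)=0$: since $\inf_{[u]_{s,p}=\rho}\mathfrak{I}_{\mu,\lambda}(u) \geq \frac{\rho^p}{2p} - \lambda(c_1\rho + c_2\rho^q) > 0$ for $\lambda$ below an explicit threshold, the minimum cannot sit on $\partial B_\rho$. The paper instead builds a rational function $g_\mu(r)$ (equation~\eqref{rationfun}) and passes through Lemmas~\ref{Le3.3}--\ref{Le3.5} to manufacture a comparison element $w_{\mu,\lambda}$ in the open ball satisfying $\mathfrak{I}_{\mu,\lambda}(w_{\mu,\lambda}) < \frac{r_{0,\mu,\lambda}^p}{p} - \mathfrak{F}_{\mu,\lambda}(u)$ for every $u$ on the sphere. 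That machinery is heavier but delivers the precise localisation $\Lambda_\mu \leq \max_{r>0} g_\mu(r)$ advertised after Theorem~\ref{thm1.1}. Your argument also produces an explicit $\Lambda_\mu$, just of a different form; for the bare existence statement it is the more economical path.
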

We will also provide explicit information on the interval $(0, \Lambda_\mu).$ In fact, we show that 
$$(0, \Lambda_\mu) \subset \left(0, \max_{r \geq 0} \frac{r^{p-1}-\mu C_{Q, s,p^*_s, \Om}^{p_s^*} r^{p_s^*-1} }{a_1 C_{Q, s,p^*_s, \Om} |\Omega|^{\frac{p_s^*-1}{p_s^*}}+a_2C_{Q, s,p^*_s, \Om}^q|\Omega|^{\frac{p_s^*-q}{p_s^*}} r^{q-1}}\right),$$ where 
 $C_{Q, s,p^*_s, \Om}$ is the best constant in the fractional subelliptic Sobolev inequality given by \eqref{bestcon} and $|\Omega|$ is the Haar measure of the open bounded subset $\Omega$ of $ \G$.
\\

The proof of Theorem \ref{thm1.1} employs direct variational methods and circumvents the use of concentration-compactness techniques, given that the parameter $\lambda$ is sufficiently small while $\mu$ can be any real value. The primary challenge in proving this theorem is that the functional $\mathfrak{I}_{\mu, \lambda}:X_0^{s, p}(\Omega) \rightarrow \mathbb{R}$ defined by \eqref{funcintro} does not globally satisfy the ``Palais-Smale" condition. We will first prove a weak lower semicontinuity result in  Lemma \ref{weaksemiconti} to overcome this issue. Indeed, we will show that the restriction of the functional
$$u \mapsto \frac{1}{p} \iint_{\G \times \G} \frac{|u(x)-u(y)|^p}{|y^{-1} x|^{Q+sp}} d x d y -\frac{\mu}{p_s^*} \int_{\Omega} |u(x)|^{p_s^*} d x$$
to $\overline{B_{X_0^{s,p}(\Omega)}(0, r_{\mu})}$ for sufficiently small $r_{\mu}$ is sequentially weakly lower semicontinuous. This technique has already been used in studying local and nonlocal equations \cite{FF15, FF2015, FF2015, BR17, MM17}.  As a result, the energy functional $\mathfrak{I}_{\mu, \lambda}$ associated with \eqref{pro1} is locally sequentially weakly lower semicontinuous. Through direct minimization, we can demonstrate that for any $\mu>0$ and sufficiently small $\lambda$, the functional $\mathfrak{I}_{\mu, \lambda}$ has a critical point (local minimum), which serves as a weak solution to problem \eqref{pro1}.
It is important to note that the authors in \cite{BR17} used truncation arguments to prove the weak lower semicontinuity result. However, this approach is less effective in the nonlocal setting. Furthermore, extending the range from $p \in [2, \infty)$ to $p \in (1, \infty)$ is challenging due to a technical result used in the proof of Lemma \ref{weaksemiconti}. A different method will likely be needed to handle this broader range, which we plan to investigate in the future.

In the next result, we will study a specific case of the problem \eqref{pro1intro}, namely, equation \eqref{pro15.7intro} below. In this scenario, we will deduce the existence of a non-negative nontrivial weak solution.  

\begin{theorem} \label{thmnonmain}
    Let $\Omega$ be a bounded open subset of a stratified Lie group $\G$ with the
homogeneous dimension $Q.$ Let  $s \in (0,1)$ and $p \in (1, \infty)$ such that $\frac{Q}{s}>p\geq 2.$ Suppose that $m$ and $q$ are two real constants such that 
$$1 \leq m<p\leq q<p_s^*=\frac{pQ}{Q-ps}.$$
Then, there exists an open interval $\Lambda \subset (0, +\infty)$ such that, for every $\lambda \in \Lambda,$ the subelliptic nonlocal problem 
\begin{equation} \label{pro15.7intro}
       \begin{cases} 
(-\Delta_{\mathbb{G}, p})^s u=  |u|^{p_s^*-2}u+\lambda (|u|^{m-1}+|u|^{q-1}) \quad &\text{in}\quad \Omega, \\
u=0\quad & \text{in}\quad \mathbb{G}\backslash \Omega,
\end{cases}
    \end{equation} has at least a nonnegative nontrivial weak solution $u_\lambda \in X_{0}^{s,p}(\Omega).$ 
\end{theorem}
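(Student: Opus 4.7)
The plan is to deduce Theorem \ref{thmnonmain} from Theorem \ref{thm1.1} by upgrading the local minimum supplied by the latter to a nonnegative, nontrivial weak solution. Observe first that $h(x,t):=|t|^{m-1}+|t|^{q-1}$ is Carath\'eodory and, using $|t|^{m-1}\le 1+|t|^{q-1}$ valid whenever $1\le m\le q$, satisfies the growth \eqref{growth} with $a_1=a_2=2$ and exponent $q\in[1,p_s^*)$. Applying Theorem \ref{thm1.1} with $\mu=1$ therefore yields, for every $\lambda\in(0,\Lambda_1)$, a local minimum $u_\lambda\in\overline{B_{X_0^{s,p}(\Omega)}(0,r_1)}$ of $\mathfrak{I}_{1,\lambda}$ that is a weak solution of \eqref{pro15.7intro}, with $r_1>0$ the radius furnished by Lemma \ref{weaksemiconti}.

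To establish nonnegativity, I would test the weak formulation against $u_\lambda^-:=\max(-u_\lambda,0)\in X_0^{s,p}(\Omega)$. Using the elementary pointwise inequality
\begin{equation*}
|a-b|^{p-2}(a-b)(a^--b^-)\le -|a^--b^-|^{p},\qquad a,b\in\mathbb{R},\ p\ge 2,
\end{equation*}
with $a=u_\lambda(x)$, $b=u_\lambda(y)$, together with the pointwise identities
\begin{equation*}
|u_\lambda|^{p_s^*-2}u_\lambda\,u_\lambda^-=-(u_\lambda^-)^{p_s^*},\qquad (|u_\lambda|^{m-1}+|u_\lambda|^{q-1})\,u_\lambda^-=(u_\lambda^-)^m+(u_\lambda^-)^q\quad\text{a.e.\ in }\Omega,
\end{equation*}
one is led to
\begin{equation*}
\iint_{\G\times\G}\frac{|u_\lambda^-(x)-u_\lambda^-(y)|^{p}}{|y^{-1}x|^{Q+sp}}\,dx\,dy\le \int_\Omega (u_\lambda^-)^{p_s^*}\,dx.
\end{equation*}
The fractional subelliptic Sobolev inequality then yields the dichotomy $u_\lambda^-\equiv 0$ or $\|u_\lambda^-\|_{X_0^{s,p}(\Omega)}\ge c_0$ for some positive constant $c_0=c_0(Q,s,p,\Omega)$; since $\|u_\lambda^-\|_{X_0^{s,p}(\Omega)}\le\|u_\lambda\|_{X_0^{s,p}(\Omega)}\le r_1$, shrinking $r_1$ (and correspondingly $\Lambda_1$) so that $r_1<c_0$ rules out the second alternative, whence $u_\lambda\ge 0$ a.e.\ in $\G$.

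For nontriviality, fix any nontrivial nonnegative $\phi\in X_0^{s,p}(\Omega)$ and evaluate
\begin{equation*}
\mathfrak{I}_{1,\lambda}(t\phi)=\frac{t^{p}}{p}\iint_{\G\times\G}\frac{|\phi(x)-\phi(y)|^{p}}{|y^{-1}x|^{Q+sp}}\,dx\,dy-\frac{t^{p_s^*}}{p_s^*}\int_\Omega\phi^{p_s^*}\,dx-\lambda\left(\frac{t^{m}}{m}\int_\Omega\phi^{m}\,dx+\frac{t^{q}}{q}\int_\Omega\phi^{q}\,dx\right)
\end{equation*}
for $t>0$. Because $m<p\le q<p_s^*$, the sublinear term $-\lambda\frac{t^m}{m}\int_\Omega\phi^m\,dx$ dominates all other contributions as $t\to 0^+$, so $t_\lambda>0$ may be chosen small enough that $t_\lambda\phi\in\overline{B_{X_0^{s,p}(\Omega)}(0,r_1)}$ and $\mathfrak{I}_{1,\lambda}(t_\lambda\phi)<0$. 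Minimality of $u_\lambda$ then gives $\mathfrak{I}_{1,\lambda}(u_\lambda)\le\mathfrak{I}_{1,\lambda}(t_\lambda\phi)<0=\mathfrak{I}_{1,\lambda}(0)$, hence $u_\lambda\not\equiv 0$. Setting $\Lambda:=(0,\Lambda_1)$, with $\Lambda_1$ shrunk as in the nonnegativity step, completes the proof.

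The principal obstacle is the quantitative interplay between the radius $r_1$ from Lemma \ref{weaksemiconti} and the Sobolev threshold $c_0$: to guarantee nonnegativity one must enforce $r_1<c_0$, which in turn constrains the admissible range of $\lambda$. An alternative and arguably cleaner route is to work from the outset with the positive-part truncations $(u^+)^{m-1}+(u^+)^{q-1}$ and $(u^+)^{p_s^*-1}$ in place of $h$ and the critical term; then $u_\lambda^+ u_\lambda^-\equiv 0$ trivially yields $u_\lambda^-\equiv 0$ without any smallness on $r_1$, at the mild expense of verifying that Lemma \ref{weaksemiconti} adapts to the truncated critical nonlinearity (which it does, because $u\mapsto u^+$ is $1$-Lipschitz on $L^{p_s^*}(\Omega)$ and preserves the Brezis--Lieb type decomposition underlying the semicontinuity argument).
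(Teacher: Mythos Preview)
Your argument is correct, and the alternative you sketch at the end---working with positive-part truncations $(u^+)^{p_s^*-1}$ and $h_+(x,t)$ from the outset---is precisely the route the paper takes (via its Theorem \ref{subthm} and Theorem \ref{thm1.1dupl}). The paper applies Theorem \ref{thm1.1} to the truncated functional $\mathfrak{I}_{\mu,\lambda}^+$, then tests with $u_\lambda^-$ to obtain $\|u_\lambda^-\|_{X_0^{s,p}(\Omega)}=0$ directly, without any smallness constraint on the radius.

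Your primary approach differs: you keep the original (non-truncated) functional, test the Euler--Lagrange equation with $u_\lambda^-$, and use the Sobolev inequality to obtain the dichotomy $u_\lambda^-\equiv 0$ or $\|u_\lambda^-\|_{X_0^{s,p}(\Omega)}\ge c_0$, then force the first alternative by shrinking the radius from Lemma \ref{weaksemiconti} below $c_0$. This is legitimate---Lemma \ref{weaksemiconti} holds on any ball of radius at most the stated bound, and $g_\mu(r)>0$ for small $r>0$, so the resulting interval $\Lambda$ is still nonempty---but it yields a smaller (and less explicit) range of admissible $\lambda$ than the truncation route. One small imprecision: the global minimum $u_\lambda$ is attained on $\overline{B_{X_0^{s,p}(\Omega)}(0,r_{0,\mu,\lambda})}$, not on the larger ball of radius $r_1$, so your nontriviality comparison should place $t_\lambda\phi$ in the former; since $r_{0,\mu,\lambda}>0$ is fixed once $\lambda$ is, this is immediate. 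Your observation that the truncation route requires checking Lemma \ref{weaksemiconti} for $\int_\Omega (u^+)^{p_s^*}\,dx$ is well taken; the paper glosses over this, but your justification via the $1$-Lipschitz property of $u\mapsto u^+$ and Brezis--Lieb is the right one.
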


In Theorem \ref{thmnonmain}, the interval $\Lambda$ can be explicitly localised and given by  
\begin{equation}
  \Lambda \subset \left( 0,\,  \max_{r>0} \frac{r^{p-1}- C_{Q, s,p^*_s, \Om}^{p_s^*} r^{p_s^*-1} }{ C_{Q, s,p^*_s, \Om} |\Omega|^{\frac{p_s^*-1}{p_s^*}}+C_{Q, s,p^*_s, \Om}^m|\Omega|^{\frac{p_s^*-m}{p_s^*}} r^{m-1}+C_{Q, s,p^*_s, \Om}^q|\Omega|^{\frac{p_s^*-q}{p_s^*}} r^{q-1}} \right).
\end{equation}

Now, we will focus on the case when $\mu=0$ in problem \eqref{pro1intro}. In \cite{FBR17}, Ferrara, Bisci and Repov$\check{\text{s}}$  considered such a problem for the sub-Laplacian on stratified Lie groups and proved the existence of a weak solution. We aim to extend  \cite{FBR17} for nonlinear and nonlocal operators, namely for the fractional $p$-sub-Laplacian on stratified Lie groups. Additionally, we point to \cite{MBF16}, where the existence of multiple solutions for parametric elliptic equations on Carnot groups has been demonstrated. This was achieved by leveraging the renowned Ambrosetti-Rabinowitz condition and a local minimum result by Ricceri \cite{Ric00}.
 We state our result in the form of the following theorem:

 \begin{theorem} \label{thm1.2intro}
    Let $\Omega$ be a bounded open subset of a stratified Lie group $\G$ with the homogeneous dimension $Q.$ Let $s \in (0, 1),$ $1<p<q<p_s^*=\frac{pQ}{Q-ps}$ and $\frac{Q}{s}>p\geq2.$ Let $h:\Omega \times \mathbb{R} \rightarrow \mathbb{R}$ be a Carath\'eodory function such that 
    \begin{equation} 
        |h(x, t)| \leq a_1+a_2|t|^{q-1}\quad \text{for}\,\,\text{a.e.}\,\,x\in \Omega,\,\, \forall t \in \mathbb{R},
    \end{equation}
    for some $a_1, a_2>0.$ Furthermore, let 
    $$0<\lambda < \frac{(q-p)^{\frac{q-p}{q-1}} (p-1)^{\frac{p-1}{q-1}}}{(a_1 C_1)^{\frac{q-p}{q-1}} (a_2C_2)^{\frac{p-1}{q-1}} (q-1) C_{Q, s, p_s^*, \Omega}^p |\Omega|^{\frac{p_s^*-q}{p_s^*}\left( \frac{p-1}{q-1} \right)} |\Omega|^{\frac{p_s^*-1}{p_s^*} \left( \frac{q-p}{q-1} \right)}  },$$ where $C_1,$ $C_2$ and  $C_{Q, s, p_s^* , \Omega}$ is the embedding constant of $L^{p_s^*}(\Omega) \hookrightarrow L^1(\Omega),$ $L^{p_s^*}(\Omega) \hookrightarrow L^q(\Omega),$ and  $X_{0}^{s, p}(\Omega) \hookrightarrow L^{p_s^*}(\Omega),$ respectively and $|\Omega|$ is the Haar measure of set $\Omega.$ Then, the following  nonlocal subelliptic parameter  problem
    \begin{equation} \label{pro2intro}
       \begin{cases} 
(-\Delta_{\mathbb{G}, p})^s u= \lambda h(x, u) \quad &\text{in}\quad \Omega \\
u=0\quad & \text{in}\quad \mathbb{G}\backslash \Omega
\end{cases},
    \end{equation} admits a weak solution  $u_{0, \lambda} \in X^{s,p}_0(\Omega)$ and 
    $$\|u_{0,  \lambda}\|_{X^{s,p}_0(\Omega)}< \left( \frac{\lambda (q-1) a_2 C_2 C_{Q,s,p_s^*, \Omega}^q |\Omega|^{\frac{p_s^*-q}{p_s^*}}}{p-1} \right)^{\frac{1}{p-q}}.$$ 
\end{theorem}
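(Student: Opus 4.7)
The plan is to apply the direct method of the calculus of variations to the $C^1$ energy functional
\[
\mathfrak{J}_\lambda(u) := \frac{1}{p} \iint_{\G \times \G} \frac{|u(x) - u(y)|^p}{|y^{-1}x|^{Q + sp}} \, dx \, dy - \lambda \int_\Omega H(x, u(x)) \, dx, \qquad H(x, t) := \int_0^t h(x, \tau)\, d\tau,
\]
whose critical points in $X_0^{s,p}(\Omega)$ are precisely the weak solutions of \eqref{pro2intro}. The feature distinguishing this case from Theorem \ref{thm1.1} is that the problem is strictly subcritical ($q < p_s^*$), so the compact embedding of $X_0^{s,p}(\Omega)$ into $L^r(\Omega)$ for every $r \in [1, p_s^*)$ established in \cite{GKR22} is available; this is the single ingredient that lets us avoid all the delicate critical-exponent machinery of Theorem \ref{thm1.1}.

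I would first show that $\mathfrak{J}_\lambda$ is sequentially weakly lower semicontinuous on the whole space $X_0^{s,p}(\Omega)$. The Gagliardo seminorm term is convex and continuous, hence weakly l.s.c. For the lower-order term, if $u_n \rightharpoonup u$ in $X_0^{s,p}(\Omega)$, then $u_n \to u$ strongly in both $L^1(\Omega)$ and $L^q(\Omega)$ by the cited compactness (using $q < p_s^*$), and the growth condition on $h$ lets a standard Nemytskii/Vitali argument upgrade this to $\int_\Omega H(x,u_n)\,dx \to \int_\Omega H(x,u)\,dx$, so $u \mapsto \int_\Omega H(x,u)\,dx$ is in fact weakly \emph{continuous}.

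The central step is minimization on a closed ball. Set
\[
r_0 := \left(\frac{\lambda(q-1)\, a_2 C_2 C_{Q,s,p_s^*,\Omega}^q |\Omega|^{(p_s^*-q)/p_s^*}}{p-1}\right)^{1/(p-q)}.
\]
By reflexivity of $X_0^{s,p}(\Omega)$, weak compactness of the closed ball $\overline{B_{r_0}}$, and the weak lower semicontinuity above, $\mathfrak{J}_\lambda$ attains its infimum on $\overline{B_{r_0}}$ at some $u_{0,\lambda}$. Since $\mathfrak{J}_\lambda(0) = 0$, it suffices to show $\mathfrak{J}_\lambda(u) > 0$ on the sphere $\|u\|_{X_0^{s,p}(\Omega)} = r_0$ in order to force $u_{0,\lambda}$ into the open ball; then $u_{0,\lambda}$ is automatically a local minimum of $\mathfrak{J}_\lambda$ on all of $X_0^{s,p}(\Omega)$, hence a critical point and a weak solution. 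The sphere estimate uses the primitive bound $|H(x,t)| \le a_1|t| + (a_2/q)|t|^q$ and the chain of embeddings $X_0^{s,p}(\Omega) \hookrightarrow L^{p_s^*}(\Omega) \hookrightarrow L^1(\Omega),\, L^q(\Omega)$ (the latter two via H\"older on the bounded set $\Omega$), yielding a lower bound on $\mathfrak{J}_\lambda|_{\partial B_{r_0}}$ of the form $\tfrac{1}{p} r_0^p - c_1 \lambda r_0 - c_2 \lambda r_0^q$, with explicit constants $c_1, c_2$ built from $a_i$, $C_1$, $C_2$, $C_{Q,s,p_s^*,\Omega}$, and powers of $|\Omega|$. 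A one-variable optimization of this lower bound produces exactly the upper bound on $\lambda$ stated in the theorem, with the prescribed $r_0$ being the maximizer.

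Because we are firmly in the subcritical regime, there is no genuine analytic obstacle: neither concentration-compactness nor the delicate local weak l.s.c.\ argument of Lemma \ref{weaksemiconti} used in the critical case is required. The real technical work is the bookkeeping-heavy one-variable calculus optimization that produces the sharp constant in the $\lambda$ threshold and matches the prescribed $r_0$.
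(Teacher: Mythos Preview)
Your overall strategy---minimize $\mathfrak{J}_\lambda$ on a closed ball and show the minimum lies in the interior---is sound, and your observation that global weak lower semicontinuity holds on all of $X_0^{s,p}(\Omega)$ (because $q<p_s^*$) is correct and does simplify things compared with the critical case. The gap is in the quantitative step: the ``sphere values are positive'' argument you describe does \emph{not} reproduce the exact $\lambda$-threshold or the exact radius $r_0$ in the theorem.

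Concretely, your lower bound on the sphere gives the condition $\tfrac{1}{p}r^p > \lambda A r + \tfrac{\lambda B}{q}r^q$ (with $A=a_1C_1 C_{Q,s,p_s^*,\Omega}|\Omega|^{(p_s^*-1)/p_s^*}$ and $B=a_2C_2C_{Q,s,p_s^*,\Omega}^q|\Omega|^{(p_s^*-q)/p_s^*}$), i.e.\ a ``primitive level'' inequality. The theorem's threshold, however, corresponds to the sharper ``derivative level'' inequality $r^{p-1}>\lambda A+\lambda B r^{q-1}$. These are genuinely different: for instance with $p=2$, $q=3$, $A=B=1$, the derivative condition holds for some $r$ iff $\lambda<\tfrac12$, while your primitive condition holds for some $r$ iff $\lambda<\tfrac{\sqrt3}{4}\approx0.433$; the optimal radii also differ. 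So your argument proves a weaker version of the theorem, and the claimed identification of the threshold and of $r_0$ with the stated values is incorrect.

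The paper obtains the sharp constant by a different mechanism. Rather than comparing sphere values with $\mathfrak{J}_\lambda(0)=0$, it compares the suprema of $\mathfrak{F}_\lambda$ on nested balls of radii $r_0$ and $r_0-\epsilon$, shows (after rescaling and the growth bound) that
\[
\limsup_{\epsilon\to0^+}\frac{\sup_{\|v\|\le r_0}\mathfrak{F}_\lambda(v)-\sup_{\|v\|\le r_0-\epsilon}\mathfrak{F}_\lambda(v)}{\epsilon}\;\le\;\lambda\bigl(A+Br_0^{q-1}\bigr)\;<\;r_0^{p-1},
\]
and then feeds this into Lemmas~\ref{Le6.6}--\ref{le6.7} (the $\mu=0$ analogues of Lemmas~\ref{Le3.3}--\ref{Le3.5}) to produce an interior comparison point $w_\lambda$ with $\mathfrak{I}_\lambda(w_\lambda)<\tfrac{r_0^p}{p}-\mathfrak{F}_\lambda(u)$ for every $u$ in the closed ball. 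If the minimizer sat on the sphere one would get $\mathfrak{I}_\lambda(u_{0,\lambda})=\tfrac{r_0^p}{p}-\mathfrak{F}_\lambda(u_{0,\lambda})>\mathfrak{I}_\lambda(w_\lambda)$, a contradiction. The point is that $w_\lambda$ is not forced to be $0$; allowing a nontrivial interior comparison point is exactly what buys the sharper derivative-level threshold and the stated $r_{0,\lambda}$.
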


 This paper will be structured as follows: In the next section, we will introduce the fundamentals of analysis on stratified Lie groups and present the fractional Sobolev-Folland-Stein spaces, which are crucial for subsequent discussions. Section \ref{sec3} will focus on several supporting results, notably the locally sequentially weakly lower semicontinuous result, which will serve as the foundation for establishing the main results. The proof of Theorem \ref{thm1.1} will be provided in Section \ref{sec4}. In Section \ref{sec5}, we will present some applications and implications of Theorem \ref{thm1.1}. Finally, in the last section, we will address a subcritical parametric nonlocal subelliptic equation associated with the fractional $p$-sub-Laplacian and demonstrate the existence of a weak solution.

\section{Preliminaries: The Stratified Lie group and fractional Sobolev spaces}\label{s2}

In this section, we will review some essential tools related to stratified Lie groups and the fractional Sobolev spaces defined on them. There are multiple approaches to introducing the concept of stratified Lie groups, and interested readers can refer to various books and monographs such as \cite{FS82, BLU07, GL92, FR16, RS19,F75}.  
\subsection{Stratified Lie groups}
A Lie group $\mathbb{G}$ (on $\mathbb{R}^N$ with a Lie group law $\circ$) is said to be {\it homogeneous} if, for each $\lambda>0$, there exists an automorphism $D_{\lambda}:\mathbb{G} \rightarrow\mathbb{G}, $ called a  {\it dilation}, defined by $D_{\lambda}(x)=  (\lambda x^{(1)}, \lambda^2 x^{(2)},..., \lambda^{k}x^{(k)})$ for $x^{(i)} \in \mathbb{R}^{N_i},\,\forall\, i=1,2,..., k$ and $N_1 + N_2+ ... + N_k = N.$   We denote the Lie algebra associated with the Lie group $\G =(\mathbb{R}^N, \circ),$ that is, the Lie algebra of left-invariant (with respect to the group law $\circ$) vector fields on $\mathbb{R}^N$ by $\mathfrak{g}$. With $N_1$ the same as in the above decomposition of $\mathbb{R}^N$, let $X_1, ..., X_{N_1} \in \mathfrak{g} $ be such that $X_i(0) = \frac{\partial}{\partial x_i}|_0$ for $i = 1, ..., N_1$. We make the following assumption: 

{\it The H\"ormander condition $rank(Lie\{X_1, ..., X_{N_1} \}) = N$
		holds for every $x \in \mathbb{R}^{N}$, that is,   the Lie algebra generated  by  $X_1, ..., X_{N_1}$ is the whole $\mathfrak{g}$.}

With the above hypothesis, we call $\G =(\mathbb{R}^n, \circ, D_\lambda)$ a stratified Lie group (or a homogeneous Carnot group). Here $k$ is called the step of the stratified Lie group and $n:=N_1$ is the number of generators. The number $Q:=\sum_{i=1}^{k}iN_i$ is called the homogeneous dimension of $\G.$ At times, we write $\lambda x$ to denote the dilation $D_{\lambda}x$.  The Haar measure on $\mathbb{G}$ is denoted by $d x$ and it is nothing but the usual Lebesgue measure on $\mathbb{R}^N.$   Let $\Omega$ be a Haar measurable subset of $\mathbb{G}$. Then $|D_{\lambda}(\Omega)|=\lambda^{Q}|\Omega|$ where $|\Omega|$ is the Haar measure of $\Omega$. 

 We would like to note here that in the literature, a {\it stratified} Lie group (or Carnot group) $\mathbb{G}$ is defined as a connected and simply connected Lie group whose Lie algebra $\mathfrak{g}$ is {\it stratifiable}. This means $\mathfrak{g}$ admits a vector space decomposition $\mathfrak{g} = \bigoplus_{i=1}^k \mathfrak{g}_i$ such that  $[\mathfrak{g}_1, \mathfrak{g}_i] = \mathfrak{g}_{i+1}$ for all $i=1.2,\ldots, k-1$ and  $[\mathfrak{g}_1, \mathfrak{g}_k] = \{0\}.$ 
 It is not difficult to see that any homogeneous Carnot group is a Carnot group according to the classical definition (see \cite[Theorem 2.2.17]{BLU07}). The opposite implication is also true; we refer to \cite[Theorem 2.2.18]{BLU07} for a detailed proof.  

The advantage of our operative definition of a stratified Lie group is that it is not only more convenient for analytic purposes but also enables us to gain a clearer understanding of the underlying group law.   Examples of stratified Lie groups include the Heisenberg group, more generally, $H$-type groups, the Engel group, and the Cartan group.  

For any $x,y\in\mathbb{G}$ the Carnot-Carath\'{e}odory distance is defined as
$$\rho_{cc}(x,y)=\inf\{l>0: ~\text{there exists an admissible}~ \gamma:[0,l]\rightarrow\mathbb{G} ~\text{with}~ \gamma(0)=x ~\text{\&}~ \gamma(l)=y \}.$$
We define $\rho_{cc}(x,y)=0$, if no such curve exists. We recall that an absolutely continuous curve $\gamma:[0,1]\rightarrow\mathbb{R}$ is said to be admissible, if there exist functions $c_i:[0,1]:\rightarrow\mathbb{R}$, for $i=1,2...,n$ such that
$${\dot{\gamma}(t)}=\sum_{i=1}^{n}c_i(t)X_i(\gamma(t))~\text{and}~ \sum_{i=1}^{n}c_i(t)^2\leq1.$$
Note that the functions $c_i$ may not be unique since the vector fields $X_i$ might not be linearly independent. Generally,  $\rho_{cc}$ is not a metric, but the H\"ormander condition for the vector fields $X_1,X_2,..., X_{n}$ guarantees that $\rho_{cc}$ is indeed a metric. Consequently, the space $(\mathbb{G}, \rho_{cc})$ is classified as a Carnot-Carathéodory space.

	A continuous function $|\cdot|: \mathbb{G} \rightarrow \mathbb{R}^{+}$ is said to be a homogeneous quasi-norm on a homogeneous Lie group $\mathbb{G}$ if it is symmetric ($|x^{-1}| = |x|$ for all $x \in\mathbb{G}$), definite ($|x| = 0$ if and only if $x = 0$), and $1$-homogeneous ( $|D_\lambda x| = \lambda |x|$ for all $x \in\mathbb{G}$ and $\lambda>0$).

An example of a quasi-norm on \(\mathbb{G}\) is the norm defined as \(d(x):=\rho_{cc}(x, 0),\,\, x\in \mathbb{G}\), where \(\rho\) is a Carnot-Carathéodory distance associated with H\"ormander vector fields on \(\mathbb{G}\). It is known that all homogeneous quasi-norms on \(\mathbb{G}\) are equivalent. In this paper, we will use a left-invariant homogeneous distance \(d(x, y):=|y^{-1} \circ x|\) for all \(x, y \in \mathbb{G}\), which is derived from the homogeneous quasi-norm on \(\mathbb{G}\). The quasi-ball of radius $r$ centered at $x\in\mathbb{G}$ with respect to the quasi-norm $|\cdot|$ is defined as
\begin{equation}\label{d-ball}
	B(x, r)=\left\{y \in \mathbb{G}: \left|y^{-1} \circ x\right|<r\right\}.
\end{equation}

As a consequence of the H\"ormander hypoelliptic condition,
the sub-Laplacian (or horizontal Laplacian)  on $\mathbb{G}$ defined as
\begin{equation}\label{d-sub-lap}
	\mathcal{L}:=X_{1}^{2}+\cdots+X_{n}^{2}
\end{equation}
is a hypoelliptic operator. 
The horizontal gradient on $\mathbb{G}$ is defined as
\begin{equation}\label{d-h-grad}
	\nabla_{\mathbb{G}}:=\left(X_{1}, X_{2}, \cdots, X_{n}\right).
\end{equation}
For $p \in(1,+\infty)$, we define the $p$-sub-Laplacian on the stratified Lie group $\mathbb{G}$ as
\begin{equation}\label{d-p-sub}
	\Delta_{\mathbb{G},p} u:=\operatorname{div}_{\mathbb{G}}\left(\left|\nabla_{\mathbb{G}} u\right|^{p-2} \nabla_{\mathbb{G}} u\right).
\end{equation}
For $s \in (0, 1)$ and $p \in (1, \infty),$ we define the fractional $p$-sub-Laplacian, $(-\Delta_{\mathbb{G}, p})^s$ as 
\begin{equation}
        (-\Delta_{\mathbb{G}, p})^s(u)(x):= C(p, Q, s)  P.V. \int_{{\mathbb{G}} } \frac{|u(x)-u(y)|^{p-2}(u(x)-u(y))}{\left|y^{-1} x\right|^{Q+p s}} d y, \quad x \in {\mathbb{G}},
    \end{equation}
     where $|\cdot|$ is a homogeneous norm on the stratified Lie group $\G,$ $C(Q,s, p)$ is the positive normalization constant depending only on the homogeneous dimension $Q$, $s$ and $p.$



\subsection{Fractional Sobolev-Folland-Stein spaces on stratified Lie groups}

We are now in a position to define the notion of fractional Sobolev-Folland-Stein type spaces useful for our study. We refer to \cite{GKR22} for complete details and comparison with other definitions of fractional Sobolev spaces on $\G$.

Let $\G$ be a stratified Lie group.  Then for $0<s<1< p<\infty$, the fractional Sobolev space $W^{s,p}(\G)$ on stratified groups is defined as 

\begin{equation}
	W^{s,p}(\G):=\{u\in L^{p}(\G): [u]_{s, p,\Omega}<\infty\},
\end{equation}
endowed with the norm 
\begin{equation}
	\|u\|_{W^{s,p}(\G)}=\|u\|_{L^p(\G)}+[u]_{s,p,\G},
\end{equation}
where $[u]_{s, p,\G}$ denotes the Gagliardo semi-norm defined by
\begin{equation}
	[u]_{s, p,\G}:=\left(\int_{\G} \int_{\G} \frac{|u(x)-u(y)|^{p}}{\left|y^{-1} x\right|^{Q+ps}} d xd y\right)^{\frac{1}{p}}<\infty.
\end{equation} 
In a similar way, 
given an open subset  $\Omega \subset {\mathbb{G}},$ we  define the fractional Sobolev-Folland-Stein type $X_0^{s,p}(\Omega)$ as a closed subspace of $W^{s,p}(\G)$ by $$X_0^{s,p}(\Omega)=\{u\in W^{s,p}(\G): u=0 \text{ in }\G\setminus\Omega\},$$ equipped with the  norm $\|u\|_{X^{s,p}_0(\Omega)}:=\|u\|_{L^p(\Omega)}+[u]_{s, p,\mathbb{G}}$. When $\Omega$ is bounded, then by Poincar\'e inequality, $\|u\|_{X^{s,p}_0(\Omega)}:=[u]_{s, p,\mathbb{G}}$ becomes a norm on $X_0^{s,p}(\Omega)$, (see \cite{GKR22}). The space $X_0^{s,p}(\Omega)$ is a reflexive and separable Banach space for $1<p<\infty$ and $1\leq p<\infty$, respectively. For any $\varphi\in X_0^{s,p}(\Omega)$, we have
\begin{equation}
	\langle\left(-\Delta_{p,{\mathbb{G}}}\right)^s u,\varphi\rangle= \iint_{\mathbb{G} \times \mathbb{G}} \frac{|u(x)-u(y)|^{p-2}(u(x)-u(y))(\varphi(x)-\varphi(y))}{\left|y^{-1} x\right|^{Q+p s}} d xd y.
\end{equation}

The next result is about the inclusion of fractional Sobolev spaces  $X_0^{s, p}(\Omega)$ into the Lebesgue spaces. In particular, the compactness of the Sobolev embedding is highly useful for dealing with fractional nonlinear subelliptic problems. For a study of the best constants of such inequalities, the readers can consult \cite{GKR23} and references therein. We recall the following compact embedding result from our previous paper \cite{GKR22}.

\begin{theorem}\label{l-3} Let $\Omega\subset\mathbb{G}$  be an open subset of a stratified Lie group $\G$ of homogeneous dimension $Q$. Then for $0<s<1\leq p<\infty$ such that $\frac{Q}{s}>p$, the embedding  $X_0^{s, p}(\Omega)\hookrightarrow L^r(\Omega)$ is continuous for all $p\leq r\leq p_s^*:=\frac{Qp}{Q-sp}$, that is, there exists  a constant $C=C(Q,s,p, \Omega)>0$ such that for all $u\in X_0^{s, p}(\Omega)$, we have
\begin{equation} \label{emG}
   \|u\|_{L^r(\Omega)}\leq C \|u\|_{X_0^{s,p}(\Omega)}.
\end{equation}
Moreover, if $\Omega$ is bounded, then the embedding is compact for all $r\in[1,p_s^*)$.
\end{theorem}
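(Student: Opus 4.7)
The plan is to prove the critical continuous embedding first, interpolate to cover all $p \leq r \leq p_s^*$, and then establish compactness for $r \in [1, p_s^*)$ when $\Omega$ is bounded via a Fréchet--Kolmogorov-Riesz argument on the stratified group $\mathbb{G}$.

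For the critical continuous embedding, I would extend each $u \in X_0^{s,p}(\Omega)$ by zero to all of $\mathbb{G}$ and prove the fractional Folland--Stein inequality
$$\|u\|_{L^{p_s^*}(\mathbb{G})} \leq C(Q,s,p)\, [u]_{s,p,\mathbb{G}}.$$
One efficient route is to realise the Gagliardo seminorm $[u]_{s,p,\mathbb{G}}$ as equivalent, up to multiplicative constants, to a Besov-type norm expressed through the subelliptic heat semigroup $H_t = e^{t\Delta_{\mathbb{G}}}$, and then invoke the $L^p \to L^{p_s^*}$ boundedness of the Riesz-type potential $(-\Delta_{\mathbb{G}})^{-s/2}$ on $\mathbb{G}$. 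This is a Hardy--Littlewood--Sobolev theorem on homogeneous groups that follows from the $Q$-homogeneity of Haar measure under the family of dilations $\{D_\lambda\}_{\lambda>0}$. Alternatively, a De Giorgi--type truncation/layer-cake argument, carried out purely in terms of the homogeneous quasi-norm, delivers the same inequality. By density, it passes to $W^{s,p}(\mathbb{G})$ and hence to $X_0^{s,p}(\Omega)$. For $p \leq r \leq p_s^*$, Hölder's interpolation with $\tfrac{1}{r} = \tfrac{\theta}{p} + \tfrac{1-\theta}{p_s^*}$ yields
$$\|u\|_{L^r(\Omega)} \leq \|u\|_{L^p(\Omega)}^{\theta}\, \|u\|_{L^{p_s^*}(\Omega)}^{1-\theta} \leq C\,\|u\|_{X_0^{s,p}(\Omega)},$$
using the trivial bound $\|u\|_{L^p(\Omega)} \leq \|u\|_{X_0^{s,p}(\Omega)}$ and the critical estimate above.

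For compactness, let $\{u_n\}$ be bounded in $X_0^{s,p}(\Omega)$, weakly convergent along a subsequence to some $u$ by reflexivity. The crucial step is strong $L^p(\Omega)$ convergence, which I would prove via the Fréchet--Kolmogorov--Riesz theorem adapted to $\mathbb{G}$. Tightness is automatic since $\mathrm{supp}\,u_n \subset \Omega$ and $\Omega$ is bounded. For $L^p$-equicontinuity under right translations, I would derive the key estimate
$$\int_{\mathbb{G}} |u_n(x h) - u_n(x)|^p\, dx \leq C\, |h|^{sp}\, [u_n]_{s,p,\mathbb{G}}^p$$
for $|h|$ small, by a left-invariant change of variables in the Gagliardo double integral restricted to a small quasi-ball around the identity and averaging the resulting inequality over a neighbourhood of $h$. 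The uniform bound on $[u_n]_{s,p,\mathbb{G}}$ then yields uniform equicontinuity as $|h| \to 0$, so Fréchet--Kolmogorov delivers $u_n \to u$ strongly in $L^p(\Omega)$. For general $r \in [1, p_s^*)$, the inequality
$$\|u_n - u\|_{L^r(\Omega)} \leq \|u_n - u\|_{L^p(\Omega)}^{\alpha}\, \|u_n - u\|_{L^{p_s^*}(\Omega)}^{1-\alpha}$$
together with Hölder on the bounded set $\Omega$ (to cover the range $r < p$) combines the strong $L^p$ convergence with the uniform $L^{p_s^*}$ bound from the critical embedding to give strong $L^r$ convergence.

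The main obstacle will be the translation estimate in the Fréchet--Kolmogorov step: the non-commutativity of the group law and the fact that the kernel $|y^{-1} x|^{-Q-sp}$ in the Gagliardo seminorm interacts cleanly only with one side of translation require a careful pairing with the left-invariance of Haar measure on $\mathbb{G}$. A secondary technical point is that the homogeneous quasi-norm $|\cdot|$ is only equivalent to, not identically equal to, a genuine left-invariant distance on $\mathbb{G}$, so the constants produced by the averaging step must be tracked to ensure the homogeneous scaling $|h|^{sp}$ is preserved; this is where the dilation structure $D_\lambda$ and the $Q$-homogeneity of Haar measure enter decisively.
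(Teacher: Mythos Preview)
The paper does not prove this theorem; it is stated without proof and explicitly recalled from the authors' earlier work \cite{GKR22} (see the sentence immediately preceding the statement: ``We recall the following compact embedding result from our previous paper \cite{GKR22}''). There is therefore no proof in the present paper to compare your proposal against.

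That said, your outline is a standard and viable route to such a result. The critical embedding via Hardy--Littlewood--Sobolev on homogeneous groups (or an equivalent heat-semigroup/Besov characterisation) is the expected argument, and the interpolation step is routine. For the compactness part, the Fr\'echet--Kolmogorov approach works, but your translation estimate deserves one more remark: because the quasi-norm obeys a quasi-triangle inequality $|y^{-1}(xh)| \leq C(|y^{-1}x| + |h|)$, averaging over $y$ in a quasi-ball $B(x, C|h|)$ indeed controls both $|u_n(xh)-u_n(y)|$ and $|u_n(x)-u_n(y)|$ by pieces of the Gagliardo integral restricted to $\{|y^{-1}x| \lesssim |h|\}$, yielding the $|h|^{sp}$ scaling. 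Unimodularity of stratified (nilpotent) groups makes the left/right translation distinction harmless for the Haar integral. An alternative, and the one more commonly seen in this literature, is to mollify via convolution with a compactly supported approximate identity on $\mathbb{G}$ and pass to the limit; this sidesteps the translation estimate entirely at the cost of a density argument.
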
 
\subsection{Example: The Heisenberg group} 
The simplest example of a stratified Lie group is the Heisenberg group $\mathbb{H}^N, N \geq 1,$ which is a non-commutative Lie group. The underlying manifold of $\mathbb{H}^N$ is $\mathbb{R}^{2N+1}:=\mathbb{R}^N\times\mathbb{R}^N\times\mathbb{R}$ for $N\in \mathbb{N}$ but the group operation is different than usual commutative group operation on $\mathbb{R}^{2N+1}.$ Indeed,  for $(x, y, t),(x', y', t')\in \mathbb{H}^N$ the group operation $\circ$ on $\mathbb{H}^N$ is given by

\begin{equation*}
	(x, y, t) \circ(x', y', t')=(x+x', y+y', t+t'+2 (\langle x',y\rangle)-\langle x,y'\rangle),
\end{equation*}
where  $\langle\cdot,\cdot\rangle$ represents the inner product on $\mathbb{R}^N$. We define the dilation map $D_\lambda,$ $\lambda>0$ on $\mathbb{H}^N$ by \begin{equation*}
	D_{\lambda}(x,y,t)=(\lambda x, \lambda y, \lambda^2 t).
\end{equation*}  The Heisenberg group $\mathbb{H}^N$ equipped with canonical dilation map $D_\lambda, \lambda>0$ becomes a homogeneous group. Therefore, it is clear that the homogeneous dimension $Q$ of $\mathbb{H}^N$ is  $2N+2$ while the topological dimension of $\mathbb{H}^N$ is $2N+1.$
The left-invariant vector fields
\begin{align}
	X_i&=\frac{\partial}{\partial x_i}+2y_i\frac{\partial}{\partial t};
	Y_i=\frac{\partial}{\partial y_i}-2x_i\frac{\partial}{\partial t}~\text{and}~
	T=\frac{\partial}{\partial t}, ~\text{for}~i=1, 2,..., N,
\end{align}
form a basis for the Lie algebra corresponding to the Heisenberg group $\mathbb{H}^N.$
Since $[X_i,Y_i]=-4T$ for $i=1,2,...,N$ and $$[X_i,X_j]=[Y_i,Y_j]=[X_i,Y_j]=[X_i,T]=[Y_j,T]=0$$ for all $i\neq j,$ the vector fields $\{X_i, Y_i\}_{i=1}^N$ satisfy the H\"ormander rank condition. Consequently, the sub-Laplacian on $\mathbb{H}^N$  given by 
$$\mathfrak{L}_{\mathbb{H}^N}:=\sum_{i=1}^N (X_i^2+Y_i^2)$$ is a hypoelliptic differential operator.

\section{Some technical results: Weak lower semicontinuity properties } \label{sec3}
 Throughout this section, we fix $s \in (0, 1)$ and assume that $\Om \subset \G$ is a bounded open subset. We will show that the functional defined by 
\begin{equation} \label{funmu}
    \mathfrak{L}_{\mu}(u):= \frac{1}{p} \iint_{\G \times \G} \frac{|u(x)-u(y)|^p}{|y^{-1} x|^{Q+sp}} d x d y -\frac{\mu}{p_s^*} \int_{\Omega} |u(x)|^{p_s^*} d x,
\end{equation}
for every $u \in X_0^{s, p}(\Omega),$ is weak lower semicontinuous. For this objective, let us denote
\begin{align}\label{bestcon}
    C_{Q, s,p^*_s, \Om}:= \sup_{u \in X_0^{s,p}(\Om) \backslash \{0\}} \frac{\|u\|_{L^{p_s^*(\Om)}}}{\Bigg( \iint_{\G \times \G} \frac{|u(x)-u(y)|^p}{|y^{-1} x|^{Q+sp}} d x d y \Bigg)^{1/p}}
\end{align}
as the best constant of the Sobolev embedding $ X_0^{s,p}(\Omega) \hookrightarrow L^{p_s^*}(\Omega)$ given by \eqref{emG}.

Having fixed the above notation, we will now state the main result of this subsection regarding the weak lower semicontinuity property of the functional \eqref{funmu}. These technical results will play a vital role in this paper.
\begin{lemma}\label{weaksemiconti}
Let $s \in (0, 1)$ and $\frac{Q}{s}>p\geq 2$, where $Q$ is the homogeneous dimension of the stratified Lie group $\G.$ Denote by $\overline{B_{X_0^{s,p}(\Omega)}(0, r)}$  the closed ball of radius $r$ centered at $0$ in the subelliptic fractional Sobolev space $X_{0}^{s, p}(\Omega),$ that is, 
\begin{equation}
    \overline{B_{X_0^{s,p}(\Omega)}(0, r)}:= \Bigg\{ u \in X_0^{s,p}(\Om): \Bigg( \iint_{\G \times \G} \frac{|u(x)-u(y)|^p}{|y^{-1} x|^{Q+sp}} d x d y \Bigg)^{1/p} \leq r \Bigg\}.
\end{equation}
Then, for every $\mu>0,$ there exists $r_\mu>0$ such that the functional $\mathfrak{L}_{\mu}$ given by \eqref{funmu} is sequentially weakly lower semicontinuous on $\overline{B_{X_0^{s,p}(\Omega)}(0, r_{\mu})}.$
\end{lemma}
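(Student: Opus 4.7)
The plan is to take an arbitrary sequence $(u_n)_{n\in\mathbb{N}} \subset \overline{B_{X_0^{s,p}(\Omega)}(0, r_\mu)}$ with $u_n \rightharpoonup u$ weakly in $X_0^{s,p}(\Omega)$ and to show that $\mathfrak{L}_\mu(u) \leq \liminf_{n \to \infty} \mathfrak{L}_\mu(u_n)$, with $r_\mu > 0$ selected explicitly at the end. Since $Q/s > p$, the subcritical embedding $X_0^{s,p}(\Omega) \hookrightarrow L^p(\Omega)$ is compact by Theorem \ref{l-3}, so, extracting a subsequence if necessary, $u_n \to u$ strongly in $L^p(\Omega)$ and pointwise almost everywhere on $\mathbb{G}$. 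The weak lower semicontinuity of the Gagliardo seminorm also yields $[u]_{s,p,\mathbb{G}} \leq r_\mu$, so that $u$ itself lies in the same ball.

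The critical term is then handled via the Brezis--Lieb lemma: the boundedness of $\|u_n\|_{L^{p_s^*}(\Omega)}$, a consequence of Sobolev's inequality \eqref{bestcon}, combined with the pointwise convergence gives
\begin{equation*}
\|u_n\|_{L^{p_s^*}(\Omega)}^{p_s^*} - \|u_n - u\|_{L^{p_s^*}(\Omega)}^{p_s^*} = \|u\|_{L^{p_s^*}(\Omega)}^{p_s^*} + o(1).
\end{equation*}
For the kinetic term, the key tool is the elementary convexity inequality, valid for $p \geq 2$,
\begin{equation*}
|a|^p \geq |b|^p + p|b|^{p-2}b(a-b) + c_p |a-b|^p \qquad \text{for all } a, b \in \mathbb{R},
\end{equation*}
with some constant $c_p > 0$. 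Applying this pointwise with $a = u_n(x) - u_n(y)$ and $b = u(x) - u(y)$, dividing by $|y^{-1}x|^{Q+sp}$, and integrating over $\mathbb{G} \times \mathbb{G}$ produces
\begin{equation*}
[u_n]_{s,p,\mathbb{G}}^p \geq [u]_{s,p,\mathbb{G}}^p + p \,\langle (-\Delta_{\mathbb{G},p})^s u, u_n - u \rangle + c_p [u_n - u]_{s,p,\mathbb{G}}^p.
\end{equation*}
Since $(-\Delta_{\mathbb{G},p})^s u$ defines a bounded linear functional on $X_0^{s,p}(\Omega)$, the weak convergence $u_n \rightharpoonup u$ forces the duality pairing to tend to zero.

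Combining the two estimates and writing $t_n := [u_n - u]_{s,p,\mathbb{G}}$,
\begin{equation*}
\mathfrak{L}_\mu(u_n) - \mathfrak{L}_\mu(u) \geq \frac{c_p}{p} t_n^p - \frac{\mu}{p_s^*} \|u_n - u\|_{L^{p_s^*}(\Omega)}^{p_s^*} + o(1).
\end{equation*}
Invoking \eqref{bestcon} to control $\|u_n - u\|_{L^{p_s^*}(\Omega)}^{p_s^*} \leq C_{Q, s,p^*_s, \Om}^{p_s^*} t_n^{p_s^*}$ and using $t_n \leq 2 r_\mu$ by the triangle inequality, we arrive at
\begin{equation*}
\mathfrak{L}_\mu(u_n) - \mathfrak{L}_\mu(u) \geq \frac{t_n^p}{p} \left( c_p - \frac{p \mu}{p_s^*}\, C_{Q, s,p^*_s, \Om}^{p_s^*} (2 r_\mu)^{p_s^*-p} \right) + o(1).
\end{equation*}
Selecting
\begin{equation*}
r_\mu := \frac{1}{2} \left( \frac{c_p \, p_s^*}{p \mu\, C_{Q, s,p^*_s, \Om}^{p_s^*}} \right)^{\frac{1}{p_s^*-p}}
\end{equation*}
makes the bracket nonnegative, so $\liminf_n \mathfrak{L}_\mu(u_n) \geq \mathfrak{L}_\mu(u)$, which is the claim. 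The main obstacle, and precisely the place where the hypothesis $p \geq 2$ cannot be relaxed, is the pointwise convexity inequality above; for $1 < p < 2$ the natural substitute has the form $c_p |a-b|^2 (|a|+|b|)^{p-2}$, which is not $p$-homogeneous and thus destroys the scaling matching needed to absorb the critical term through a single radius $r_\mu$ of the present form.
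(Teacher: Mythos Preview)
Your proof is correct and follows the same strategy as the paper: both combine the pointwise convexity inequality $|a|^p \ge |b|^p + p|b|^{p-2}b(a-b) + c_p|a-b|^p$ (valid for $p\ge 2$, with $c_p=2^{1-p}$ in the paper) with the Brezis--Lieb lemma for the critical term, then absorb the $L^{p_s^*}$ remainder via the Sobolev constant and the bound $[u_n-u]_{s,p,\mathbb{G}}\le 2r_\mu$. Your handling of the cross term is in fact cleaner than the paper's: you observe directly that $\langle(-\Delta_{\mathbb{G},p})^s u,\,u_n-u\rangle\to 0$ because $(-\Delta_{\mathbb{G},p})^s u$ is a fixed element of the dual and $u_n-u\rightharpoonup 0$, whereas the paper reaches the same conclusion through a longer chain of auxiliary limits.
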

\begin{proof}
We begin the proof by fixing $\mu>0.$ To prove the sequentially weak lower semicontinuity of $\mathfrak{L}_{\mu}$ in $\overline{B_{X_0^{s,p}(\Omega)}(0, r_{\mu})},$ we choose an arbitrary sequence $(u_k)\subset \overline{B_{X_0^{s,p}(\Omega)}(0, r)} $ such that $u_k \rightharpoonup v $ weakly for some $v \in \overline{B_{X_0^{s,p}(\Omega)}(0, r)}.$ This implies that \begin{align} \label{eq3.4}
   & \iint_{\G \times \G} \frac{|u_k(x)-u_k(y)|^{p-2}(u_k(x)-u_k(y)) (\phi(x)-\phi(y))}{|y^{-1} x|^{Q+sp}} d x d y \nonumber\\& \rightarrow \iint_{\G \times \G} \frac{|v(x)-v(y)|^{p-2}(v(x)-v(y)) (\phi(x)-\phi(y))}{|y^{-1} x|^{Q+sp}} d x d y,
\end{align}
for every $\phi \in X^{s,p}_0(\Omega)$ as $k \rightarrow \infty.$ Note that since $s\in(0,1)$ and $1<p<Q$, we have $p_s^*=\frac{Qp}{Q-ps}>p$.  Thus, from the compact embedding as $p<p_s^*$, we get
\begin{equation}\label{eq3.11}
    u_k \rightarrow v~\text{in}~L^p(\Omega)
\end{equation}
Therefore, up to a subsequence
\begin{equation}\label{eq3.12}
    u_k(x) \rightarrow v(x)~\text{a.e in}~\Omega 
\end{equation}
and using a similar argument as in Lemma A.1 \cite{WM97}, we may assume that there exists $g\in L^p(\Omega)$ such that 
\begin{equation}\label{eq3.13}
    |u_k(x)|\leq g(x)~\text{a.e in}~\Omega, \forall\,k \in\mathbb{N}.
\end{equation}
Indeed from \eqref{eq3.12}, we may assume that there exists a subsequence $(w_k)$ of $(u_k)$ such that $$\|w_{j+1}-w_j\|_p\leq \frac{1}{2^j}~\forall\,j\in\mathbb{N}.$$
Define, $g(x)=w_1(x)+\sum_{j=1}^{\infty}\|w_{j+1}(x)-w_j(x)\|_p$. Thus $|w_k(x)|\leq g(x)$ and $|v(x)|\leq g(x).$

Testing \eqref{eq3.4} with $\phi=v$, we obtain 
 \begin{align}\label{eq3.9}
     & \iint_{\G \times \G} \frac{|u_k(x)-u_k(y)|^{p-2}(u_k(x)-u_k(y)) (v(x)-v(y))}{|y^{-1} x|^{Q+sp}} d x d y \nonumber\\& \rightarrow \iint_{\G \times \G} \frac{|v(x)-v(y)|^{p}}{|y^{-1} x|^{Q+sp}} d x d y.
 \end{align}
Notice that for $j=1, 2,3,\dots$, testing \eqref{eq3.4} with $\phi=u_j$ and taking $k\rightarrow\infty$, we get 
 \begin{align}\label{eq3.10}
     & \iint_{\G \times \G} \frac{|u_k(x)-u_k(y)|^{p-2}(u_k(x)-u_k(y)) (u_j(x)-u_j(y))}{|y^{-1} x|^{Q+sp}} d x d y \nonumber\\& \rightarrow \iint_{\G \times \G} \frac{|v(x)-v(y)|^{p-2}(v(x)-v(y)) (u_j(x)-u_j(y))}{|y^{-1} x|^{Q+sp}} d x d y.
 \end{align}
Again from \eqref{eq3.11}, \eqref{eq3.12} and the Lebesgue dominated convergence theorem, we get
\begin{align}\label{eq3.14}
     & \iint_{\G \times \G} \frac{|u_k(x)-u_k(y)|^{p}}{|y^{-1} x|^{Q+sp}} d x d y \rightarrow \iint_{\G \times \G} \frac{|v(x)-v(y)|^{p}}{|y^{-1} x|^{Q+sp}} d x d y.
 \end{align}
Therefore, 
 \begin{align}\label{eq3.15}
     & \iint_{\G \times \G} \frac{|v(x)-v(y)|^{p-2}(v(x)-v(y)) (u_j(x)-u_j(y))}{|y^{-1} x|^{Q+sp}} d x d y \nonumber\\& \rightarrow \iint_{\G \times \G} \frac{|v(x)-v(y)|^{p}}{|y^{-1} x|^{Q+sp}} d x d y.
 \end{align}
To prove the desired conclusion it is enough to show that 
\begin{equation}
    \liminf_{k \rightarrow \infty} \mathcal{L}_{\mu}(u_k) \geq \mathcal{L}_{\mu}(v),
\end{equation}
which means that 
\begin{align}
\nonumber \liminf_{k \rightarrow \infty} \Bigg[ \frac{1}{p} &\Bigg( \iint_{\G \times \G} \frac{|u_k(x)-u_k(y)|^p}{|y^{-1} x|^{Q+sp}} d x d y - \iint_{\G \times \G} \frac{|v(x)-v(y)|^p}{|y^{-1} x|^{Q+sp}} d x d y \Bigg)\\&- \frac{\mu}{p_s^*} \Bigg( \int_{\Omega} |u_k(x)|^{p_s^*} d x-\int_{\Omega} |v(x)|^{p_s^*} d x\Bigg) \Bigg] \geq 0.
\end{align}
First note that  the celebrated Brezis-Lieb lemma (\cite{BL83}) implies that 
 \begin{align} \label{eq3.7}
     \liminf_{k \rightarrow \infty}  \Bigg( \int_{\Omega} |u_k(x)|^{p_s^*} d x-\int_{\Omega} |v(x)|^{p_s^*} d x\Bigg)=\liminf_{k \rightarrow \infty} \int_{\Omega}|u_k(x)-v(x)|^{p_s^*} d x. 
 \end{align}

On the other hand, let us recall the following inequality from \cite[Lemma 4.2]{L90} for $p \geq 2$ and $a, b \in \mathbb{R}:$
\begin{equation}\label{albequ}
    |b|^p-|a|^p \geq p |a|^{p-2} a(b-a)+2^{1-p} |a-b|^p.
\end{equation}
Now, we choose $a:=v(x)-v(y)$ and $b:=u_k(x)-u_k(y)$ for $x, y \in \G.$ Then from \eqref{albequ}
 above we deduce that 
 
 \begin{align}\label{eq3.8}
  \nonumber   \frac{1}{p} &\Bigg( \iint_{\G \times \G} \frac{|u_k(x)-u_k(y)|^p}{|y^{-1} x|^{Q+sp}} d x d y - \iint_{\G \times \G} \frac{|v(x)-v(y)|^p}{|y^{-1} x|^{Q+sp}} d x d y \Bigg) \\&\geq \nonumber\iint_{\G \times \G} \frac{|v(x)-v(y)|^{p-2}(v(x)-v(y)) ((u_k(x)-u_k(y))}{|y^{-1} x|^{Q+sp}} d x d y\\&-\iint_{\G \times \G} \frac{|v(x)-v(y)|^p}{|y^{-1} x|^{Q+sp}} d x d y +\frac{2^{1-p}}{p} \iint_{\G \times \G} \frac{|(u_k-v)(x)-(u_k-v)(y))|^p}{|y^{-1} x|^{Q+sp}} d x d y.
 \end{align}

  Therefore, making use of \eqref{eq3.7} and \eqref{eq3.8} in \eqref{eq3.15} we get 
  \begin{align}\label{eq3.16}
      \liminf_{k \rightarrow \infty}(\mathcal{L}_\mu(u_k)-\mathcal{L}_\mu(v))\geq & \liminf_{k \rightarrow \infty}\left\{ \frac{2^{1-p}}{p} \iint_{\G \times \G} \frac{|(u_k-v)(x)-(u_k-v)(y))|^p}{|y^{-1} x|^{Q+sp}} d x d y\right. \nonumber\\ &\left. -\frac{\mu}{p_s^*}\int_{\Omega}|u_k(x)-v(x)|^{p_s^*} d x\right\}.
  \end{align}
 Now, using the fact $(u_k-v)\subset \overline{B_{X_0^{s,p}(\Omega)}(0, 2r)}$ and the continuous Sobolev embedding $X_0^{s,p}(\Omega)\rightarrow L^{p_s^*}(\Omega)$, from \eqref{eq3.16} we deduce
 \begin{align*}
   \nonumber   \liminf_{k \rightarrow \infty}(\mathcal{L}_\mu(u_k)-\mathcal{L}_\mu(v))\geq & \liminf_{k \rightarrow \infty} \left\{\frac{2^{1-p}}{p}  \|u_k-v\|^p_{X_0^{s,p}(\Omega)} - \frac{\mu}{p_s^*} \|u_k-v\|_{L^{p_s^*}(\Omega)}^{p_s^*}  \right\} 
 \\\geq & \liminf_{k \rightarrow \infty}   \|u_k-v\|^p_{X_0^{s,p}(\Omega)}\left\{ \frac{2^{1-p}}{p} -\frac{\mu c^{p_s^*}}{p_s^*}\|u_k-v\|_{X_0^{s,p}(\Omega)}^{p_s^*-p}\right\}\\
      \geq & \liminf_{k \rightarrow \infty}\|u_k-v\|^p_{X_0^{s,p}(\Omega)}\left\{ \frac{2^{1-p}}{p} -\frac{\mu c^{p_s^*}2^{p_s^*-p}}{p_s^*}r^{p_s^*-p}\right\},
  \end{align*}
  where $c$ is the positive embedding constant.
  
 Choose $$0<r\leq \frac{1}{2}\left( \frac{2^{1-p}p_s^*}{p\mu c^{p_s^*}} \right)^{\frac{1}{p_s^*-p}}.$$
 Then we get $$\liminf_{k \rightarrow \infty}(\mathcal{L}_\mu(u_k)-\mathcal{L}_\mu(v))\geq0.$$
 That is
 \begin{align*}
      \liminf_{k \rightarrow \infty}&\left\{\frac{1}{p}\left(\iint_{\G \times \G} \frac{|u_k(x)-u_k(y)|^{p}}{|y^{-1} x|^{Q+sp}} d x d y - \iint_{\G \times \G} \frac{|v(x)-v(y)|^{p}}{|y^{-1} x|^{Q+sp}} d x d y\right)\right. \\&-\frac{\mu}{p_s^*} \Bigg( \int_{\Omega} |u_k(x)|^{p_s^*} d x-\int_{\Omega} |v(x)|^{p_s^*} d x\Bigg)\geq 0.
 \end{align*}
 Hence, for any $0<r_{\mu}\leq \frac{1}{2}\left( \frac{2^{1-p}p_s^*}{p\mu c^{p_s^*}} \right)^{\frac{1}{p_s^*-p}}$, the functional $\mathcal{L}_\mu$ becomes sequentially weak lower semicontinuous on $\overline{B_{X_0^{s,p}(\Omega)}(0, r_\mu)}.$ This completes the proof.
  \end{proof}

For fixed $\mu, \lambda>0$ we define the functionals
\begin{equation}
    \mathfrak{E}_{p, s, Q}(u):= \left(\iint_{\G \times \G} \frac{|u(x)-u(y)|^p}{|y^{-1}x|^{Q+ps}} d x d y \right)^{\frac{1}{p}}
\end{equation}
and 
\begin{equation} \label{ruz19}
    \mathfrak{F}_{\mu, \lambda}(u):=\frac{\mu}{p_s^*} \int_\Omega  |u(x)|^{p_s^*} d x+\lambda \int_\Omega H(x, u(x)) d x,
\end{equation}
for every $u \in X_0^{s,p}(\Omega),$ where the function $H$ is given by 
\begin{equation} \label{ruz20}
    H(x, t):=\int_0^t h(x , \tau) d \tau, \quad \forall (x, t) \in \Omega \times \mathbb{R}.
\end{equation}

\begin{lemma} \label{Le3.3} Let $p \in (1, \infty),$ $\lambda, \mu>0$ and $0<\zeta<\xi.$ We set
$$\Psi_{\mu, \lambda}(\xi, \zeta):= \sup_{v \in \mathfrak{E}_{p, s, Q}^{-1}([0, \xi] )} \mathfrak{F}_{\mu, \lambda}(v)- \sup_{v \in \mathfrak{E}_{p, s, Q}^{-1}([0, \xi-\zeta]) } \mathfrak{F}_{\mu, \lambda}(v).$$ Suppose that 
\begin{equation} \label{L1}
    \limsup_{\epsilon \rightarrow 0^+}  \frac{\Psi_{\mu, \lambda}(r_0, \epsilon)}{\epsilon}< r_0^{p-1}
\end{equation}
for some $r_0>0.$
    Then 
    \begin{equation}\label{L4}
        \inf_{0<\eta <r_0} \frac{\Psi_{\mu, \lambda}(r_0, r_0-\eta)}{r_0^p-\eta^p}< \frac{1}{p}.
    \end{equation}
\end{lemma}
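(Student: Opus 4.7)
The plan is to translate the hypothesis, which is a bound on the ratio $\Psi_{\mu,\lambda}(r_0,\epsilon)/\epsilon$ as $\epsilon \to 0^+$, into a bound on the ratio $\Psi_{\mu,\lambda}(r_0, r_0-\eta)/(r_0^p - \eta^p)$ as $\eta \to r_0^-$, by making the substitution $\epsilon = r_0 - \eta$ and using the derivative of $t \mapsto t^p$ at $t = r_0$. The conclusion will then follow simply by passing to the limit, since the infimum is bounded above by the limit superior of a certain sequence.

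First, I would unpack the hypothesis \eqref{L1}: since $\limsup_{\epsilon \to 0^+} \Psi_{\mu,\lambda}(r_0,\epsilon)/\epsilon < r_0^{p-1}$, I can pick a constant $c$ with
\[
\limsup_{\epsilon \to 0^+} \frac{\Psi_{\mu,\lambda}(r_0,\epsilon)}{\epsilon} < c < r_0^{p-1},
\]
and then find $\epsilon_0 \in (0, r_0)$ so small that $\Psi_{\mu,\lambda}(r_0,\epsilon) \leq c\,\epsilon$ for every $\epsilon \in (0,\epsilon_0)$. Note here that $\Psi_{\mu,\lambda}(r_0,\epsilon) \geq 0$ by definition, because $\mathfrak{E}_{p,s,Q}^{-1}([0,r_0-\epsilon])$ is a subset of $\mathfrak{E}_{p,s,Q}^{-1}([0,r_0])$, so the supremum over the larger set dominates.

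Next, for each $\eta \in (r_0 - \epsilon_0, r_0)$, I would set $\epsilon := r_0 - \eta \in (0, \epsilon_0)$, which gives
\[
\frac{\Psi_{\mu,\lambda}(r_0, r_0-\eta)}{r_0^p - \eta^p} \; \leq \; \frac{c(r_0 - \eta)}{r_0^p - \eta^p}.
\]
Sending $\eta \to r_0^-$, the elementary limit $\lim_{\eta \to r_0^-}\frac{r_0^p - \eta^p}{r_0 - \eta} = p\, r_0^{p-1}$ (which is just the derivative of $t \mapsto t^p$ at $t = r_0$, valid for real $p \geq 2$) yields
\[
\limsup_{\eta \to r_0^-} \frac{\Psi_{\mu,\lambda}(r_0, r_0-\eta)}{r_0^p - \eta^p} \leq \frac{c}{p\, r_0^{p-1}} < \frac{r_0^{p-1}}{p\, r_0^{p-1}} = \frac{1}{p},
\]
where the strict inequality uses $c < r_0^{p-1}$. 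Consequently there exists $\eta^* \in (r_0 - \epsilon_0, r_0)$ with $\Psi_{\mu,\lambda}(r_0, r_0-\eta^*)/(r_0^p - (\eta^*)^p) < 1/p$, which forces the infimum in \eqref{L4} to be strictly less than $1/p$.

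There is no genuine obstacle here; the argument is a purely calculus-level manipulation once the right change of variable $\epsilon \leftrightarrow r_0 - \eta$ is identified. The only subtlety worth double-checking is that the hypothesis with a strict $<$ can be upgraded to a uniform bound $\Psi_{\mu,\lambda}(r_0,\epsilon) \leq c\,\epsilon$ on a one-sided neighbourhood of $0$, which is why I insert the intermediate constant $c$ between the $\limsup$ and $r_0^{p-1}$; without this buffer one could not pass from the $\limsup$ hypothesis to the strict inequality at the limit.
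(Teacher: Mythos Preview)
Your proof is correct and follows essentially the same route as the paper: both arguments make the substitution $\epsilon = r_0 - \eta$ and reduce the conclusion to the elementary limit $\lim_{\epsilon\to 0^+}(r_0^p-(r_0-\epsilon)^p)/\epsilon = p\,r_0^{p-1}$ (the paper phrases this via L'H\^opital, you via the derivative), then extract a single $\eta^*$ witnessing the strict inequality for the infimum. Your insertion of the intermediate constant $c$ between the $\limsup$ and $r_0^{p-1}$ is a cleaner way to handle the passage from the $\limsup$ hypothesis to the strict inequality than the paper's slightly informal product-of-limsup step; one cosmetic point is that the derivative of $t\mapsto t^p$ at $t=r_0$ is valid for all $p\in(1,\infty)$, not just $p\geq 2$, so your parenthetical restriction is unnecessary.
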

\begin{proof}
    Let us observe that if $\epsilon \in (0, r_0)$ then we have 
    \begin{align} \label{L2}
        \frac{\Psi_{\mu, \lambda}(r_0, \epsilon)}{r_0^p-(r_0- \epsilon)^p}=  \frac{\Psi_{\mu, \lambda}(r_0, \epsilon)}{\epsilon}  \frac{-\epsilon}{r_0^{p} \left[\left(1-\frac{\epsilon}{r_0} \right)^p-1 \right]}.
    \end{align}
    A simple use of the L'H\^opital rule yields that 
    $$\lim_{\epsilon \rightarrow 0^+} \frac{-\epsilon}{r_0^{p} \left[\left(1-\frac{\epsilon}{r_0} \right)^p-1 \right]} = \frac{1}{p\,r_0^{p-1}}.$$
Therefore, from \eqref{L1} and \eqref{L2} it follows that
\begin{equation}
    \label{L3}
    \limsup_{\epsilon \rightarrow 0^+}\frac{\Psi_{\mu, \lambda}(r_0, \epsilon)}{r_0^p-(r_0- \epsilon)^p}=\limsup_{\epsilon \rightarrow 0^+}\frac{\Psi_{\mu, \lambda}(r_0, \epsilon)}{\epsilon} \times \frac{1}{p\,r_0^{p-1}}< r_0^{p-1} \times \frac{1}{p\,r_0^{p-1}}  = \frac{1}{p}.
\end{equation}
Therefore, we can find  $\bar{\epsilon} \in (0, r_0)$ such that 
$$\frac{\Psi_{\mu, \lambda}(r_0, \bar{\epsilon})}{r_0^p-(r_0- \bar{\epsilon})^p}< \frac{1}{p}$$
and so $\bar{\eta}:=r_0-\bar{\epsilon}<r_0$ gives that $$ \inf_{0<\eta <r_0} \frac{\Psi_{\mu, \lambda}(r_0, r_0-\eta)}{r_0^p-\eta^p}<  \frac{\Psi_{\mu, \lambda}(r_0, r_0-\bar{\eta})}{r_0^p-\bar{\eta}^p}< \frac{1}{p},$$
verifying the inequality \eqref{L4}.
\end{proof}

\begin{lemma} \label{le3.4}
    Let $p \in (1, \infty)$ and $\lambda, \mu >0.$ Suppose that \eqref{L4} holds for some $r_0>0.$ Then we have 
    \begin{equation}
        \label{P1}
        \inf_{u \in \mathfrak{E}^{-1}_{p,s,Q}([0, r_0))} \frac{\sup_{v \in \mathfrak{E}^{-1}_{p,s,Q}([0, r_0])} \mathfrak{F}_{\mu, \lambda}(v)- \mathfrak{F}_{\mu, \lambda}(u)}{r_0^p- \mathfrak{E}_{p,s,Q}(u)^p} <\frac{1}{p}.
    \end{equation}
\end{lemma}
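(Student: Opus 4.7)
The plan is simply to unwind the definition of $\Psi_{\mu,\lambda}$ appearing in the hypothesis \eqref{L4} and to exhibit a near-maximizer of $\mathfrak{F}_{\mu,\lambda}$ over the smaller sublevel set as a test element for the infimum in \eqref{P1}. Introducing the shorthand
$$S_t := \sup_{v \in \mathfrak{E}^{-1}_{p,s,Q}([0, t])} \mathfrak{F}_{\mu,\lambda}(v), \qquad t > 0,$$
the hypothesis \eqref{L4} produces some $\eta_0 \in (0, r_0)$ with $S_{r_0} - S_{\eta_0} = \Psi_{\mu,\lambda}(r_0, r_0 - \eta_0) < \tfrac{1}{p}(r_0^p - \eta_0^p)$. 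In particular, there is a positive gap $\delta > 0$ such that
$$S_{r_0} - S_{\eta_0} < \Bigl(\frac{1}{p} - \delta\Bigr)(r_0^p - \eta_0^p).$$

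Next, I would appeal to the definition of supremum to select, for each sufficiently small $\epsilon > 0$, an element $u_\epsilon \in \mathfrak{E}^{-1}_{p,s,Q}([0, \eta_0])$ with $\mathfrak{F}_{\mu,\lambda}(u_\epsilon) > S_{\eta_0} - \epsilon$. Because $\eta_0 < r_0$, the witness $u_\epsilon$ automatically lies in $\mathfrak{E}^{-1}_{p,s,Q}([0, r_0))$, and the denominator $r_0^p - \mathfrak{E}_{p,s,Q}(u_\epsilon)^p \geq r_0^p - \eta_0^p > 0$ is strictly positive and uniformly bounded away from zero. Combining these two observations with the monotone bound $S_{r_0} \geq \mathfrak{F}_{\mu,\lambda}(u_\epsilon)$ gives the chain
$$\frac{S_{r_0} - \mathfrak{F}_{\mu,\lambda}(u_\epsilon)}{r_0^p - \mathfrak{E}_{p,s,Q}(u_\epsilon)^p} \leq \frac{S_{r_0} - S_{\eta_0} + \epsilon}{r_0^p - \eta_0^p}.$$

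Finally, I would pin down $\epsilon$ by choosing it smaller than $\delta(r_0^p - \eta_0^p)$, which forces the right-hand side of the above display to be strictly less than $\tfrac{1}{p}$. Since $u_\epsilon$ is admissible for the infimum in \eqref{P1}, this yields the desired strict inequality. I do not anticipate any genuine obstacle: once the definition of $\Psi_{\mu,\lambda}$ is written out and the strict gap from the preceding lemma is recorded, the argument reduces to a standard near-maximizer construction plus a one-line bookkeeping of the $\epsilon$-slack.
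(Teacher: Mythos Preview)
Your argument is correct. Once you record the strict gap from the hypothesis, pick a near-maximizer $u_\epsilon$ of $\mathfrak{F}_{\mu,\lambda}$ on the closed ball of radius $\eta_0$, and absorb the $\epsilon$-slack into that gap, the conclusion follows; the nonnegativity of the numerator (from $u_\epsilon \in \mathfrak{E}^{-1}_{p,s,Q}([0,r_0])$) and the uniform positive lower bound on the denominator make the final inequality chain valid.

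The paper proceeds differently: rather than working with a near-maximizer, it argues that the supremum $S_{\eta_0}$ is actually \emph{attained} at some $\tilde v$ with $\mathfrak{E}_{p,s,Q}(\tilde v)=\eta_0$, invoking the weak-closure relation between the sphere and the closed ball together with the sequential weak lower semicontinuity result of Lemma~\ref{weaksemiconti}. It then plugs this exact maximizer directly into the quotient. Your route is more elementary and self-contained: it bypasses entirely the question of whether $\mathfrak{F}_{\mu,\lambda}$ (which contains the critical term $\tfrac{\mu}{p_s^*}\|u\|_{p_s^*}^{p_s^*}$) actually attains its supremum on the ball, at the modest cost of carrying an explicit $\epsilon$ through one additional line. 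The paper's approach, when it works, gives a slightly cleaner endgame (a single test function with no $\epsilon$), but relies on a semicontinuity/attainment step that your argument shows is unnecessary for this lemma.
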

\begin{proof}
    In view  of \eqref{L4}, we have 
    \begin{equation} \label{eq3.26}
        \sup_{v \in \mathfrak{E}^{-1}_{p,s,Q}([0, \eta_0))} \mathfrak{F}_{\mu, \lambda}(v) > \sup_{v \in \mathfrak{E}^{-1}_{p,s,Q}([0, r_0))} \mathfrak{F}_{\mu, \lambda}(v)-\frac{r_0^p-\eta_0^p}{p}
    \end{equation}
    for some $0<\eta_0<r_0.$

    Now, let us recall a basic property of lower semicontinuous functions defined on a topological space $(A, \tau_A).$ Denote the topological closure of a subset $B \subset A$ by $\bar{B}.$ Then, for a real-valued lower semicontinuous function $g$ defined on $\bar{B},$ one have 
    $$\sup_{x \in \bar{B}} g(x) =\sup_{x \in B} g(x).$$ 
    We take $B:=\{ v \in X^{s,p}_0(\Omega): \mathfrak{E}_{p,s,Q}(v) = \eta_0 \},$ so that  the  weak closure of $B$ is $\bar{B}:=\mathfrak{E}^{-1}_{p,s,Q}([0, \eta_0])$ and  $\mathfrak{I}_{\mu, \lambda}$ is sequential weakly lower semicontinuous on $\mathfrak{E}^{-1}_{p,s,Q}([0, \eta_0]).$ Thus, we get 
     
\begin{equation}
        \sup_{v \in \mathfrak{E}^{-1}_{p,s,Q}([0, \eta_0])} \mathfrak{F}_{\mu, \lambda}(v) = \sup_{\{v \in X^{s,p}_0(\Omega): \mathfrak{E}_{p,s,Q}(v)=\eta_0\}} \mathfrak{F}_{\mu, \lambda}(v)=\mathfrak{F}_{\mu, \lambda}(\tilde{v}),
    \end{equation}
    for some $\tilde{v} \in X_0^{s,p}(\Omega)$ with $\mathfrak{E}_{p,s,Q}(\tilde{v})=\eta_0.$
    Thus, from \eqref{eq3.26}, we obtain
    \begin{equation}\label{P3}
        \mathfrak{F}_{\mu, \lambda}(\tilde{v}) > \sup_{v \in \mathfrak{E}^{-1}_{p,s,Q}([0, r_0])} \mathfrak{F}_{\mu, \lambda}(v)-\frac{r_0^p-\mathfrak{E}_{p,s,Q}(\tilde{v})^p}{p}.
    \end{equation}
    Therefore, we get from \eqref{P3} that
    \begin{equation}
        \inf_{u \in \mathfrak{E}^{-1}_{p,s,Q}([0, r_0))} \frac{\sup_{v \in \mathfrak{E}^{-1}_{p,s,Q}([0, r_0])} \mathfrak{F}_{\mu, \lambda}(v)- \mathfrak{F}_{\mu, \lambda}(u)}{r_0^p- \mathfrak{E}_{p,s,Q}(u)^p} < \frac{\sup_{v \in \mathfrak{E}^{-1}_{p,s,Q}([0, r_0])} \mathfrak{F}_{\mu, \lambda}(v)- \mathfrak{F}_{\mu, \lambda}(\tilde{v})}{r_0^p- \mathfrak{E}_{p,s,Q}(\tilde{v})^p} <\frac{1}{p},
    \end{equation}
    proving inequality \eqref{P1}.
\end{proof}

\begin{lemma} \label{Le3.5}
    Let $p \in (1, \infty)$ and $\lambda, \mu>0.$ Suppose that \eqref{P1} holds for some $r_0>0.$ Then, there exists $w \in \mathfrak{E}^{-1}_{p,s,Q}([0, r_0))$ such that 
    \begin{equation} \label{P323}
        \mathfrak{I}_{\lambda, \mu}(w)=\frac{1}{p} \mathfrak{E}_{p,s,Q}(w)^p- \mathfrak{F}_{\mu, \lambda}(w)<\frac{r_0^p}{p}- \mathfrak{F}_{\mu, \lambda}(u)
    \end{equation}
    for every $u \in \mathfrak{E}^{-1}_{p,s,Q}([0, r_0]).$
\end{lemma}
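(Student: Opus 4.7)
The plan is to extract the desired $w$ directly from hypothesis \eqref{P1} via the definition of infimum, and then convert the resulting ratio inequality into the target form using the trivial bound $\mathfrak{F}_{\mu,\lambda}(u) \leq \sup_{v} \mathfrak{F}_{\mu,\lambda}(v)$.

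First, since \eqref{P1} states that the infimum over $\mathfrak{E}^{-1}_{p,s,Q}([0,r_0))$ of the displayed ratio is strictly less than $1/p$, by the very definition of infimum there exists at least one element $w \in \mathfrak{E}^{-1}_{p,s,Q}([0,r_0))$ for which
$$\frac{\sup_{v \in \mathfrak{E}^{-1}_{p,s,Q}([0,r_0])} \mathfrak{F}_{\mu,\lambda}(v) - \mathfrak{F}_{\mu,\lambda}(w)}{r_0^p - \mathfrak{E}_{p,s,Q}(w)^p} < \frac{1}{p}.$$
Because $\mathfrak{E}_{p,s,Q}(w) < r_0$, the denominator is strictly positive, so this inequality is equivalent to
$$\sup_{v \in \mathfrak{E}^{-1}_{p,s,Q}([0,r_0])} \mathfrak{F}_{\mu,\lambda}(v) - \mathfrak{F}_{\mu,\lambda}(w) < \frac{r_0^p - \mathfrak{E}_{p,s,Q}(w)^p}{p}.$$

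Next I would fix this $w$ once and for all, then let $u \in \mathfrak{E}^{-1}_{p,s,Q}([0,r_0])$ be arbitrary. Since $\mathfrak{F}_{\mu,\lambda}(u)$ is bounded above by the supremum appearing on the left, the previous display yields
$$\mathfrak{F}_{\mu,\lambda}(u) - \mathfrak{F}_{\mu,\lambda}(w) < \frac{r_0^p - \mathfrak{E}_{p,s,Q}(w)^p}{p},$$
which, after separating terms on the two sides, gives
$$\frac{1}{p}\,\mathfrak{E}_{p,s,Q}(w)^p - \mathfrak{F}_{\mu,\lambda}(w) < \frac{r_0^p}{p} - \mathfrak{F}_{\mu,\lambda}(u).$$
Recognising the left-hand side as $\mathfrak{I}_{\lambda,\mu}(w)$ by the very definition of $\mathfrak{I}_{\lambda,\mu}$ and $\mathfrak{E}_{p,s,Q}$, this is exactly the inequality \eqref{P323}.

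There is essentially no serious obstacle here: the lemma is a direct consequence of the infimum characterization provided by \eqref{P1} combined with the trivial majorization of $\mathfrak{F}_{\mu,\lambda}(u)$ by its supremum over the ball. The one conceptual point worth emphasising is that the inequality must hold for \emph{every} $u$ in the closed ball with a single, uniform choice of $w$; this uniformity comes for free precisely because the supremum over $v$ is already absorbed into the ratio in \eqref{P1}, so the selected $w$ depends only on $r_0$ (and on $\mu,\lambda$), not on $u$.
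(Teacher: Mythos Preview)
Your proof is correct and follows essentially the same approach as the paper: extract $w$ from the infimum in \eqref{P1}, rearrange using the positivity of $r_0^p-\mathfrak{E}_{p,s,Q}(w)^p$, and then replace the supremum by $\mathfrak{F}_{\mu,\lambda}(u)$ for an arbitrary $u$ in the closed ball. If anything, your write-up is slightly more explicit about why the denominator is positive and why the single choice of $w$ works uniformly in $u$.
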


\begin{proof}
    It is clear from the assumption that
    \begin{equation}
        \inf_{u \in \mathfrak{E}^{-1}_{p,s,Q}([0, r_0))} \frac{\sup_{v \in \mathfrak{E}^{-1}_{p,s,Q}([0, r_0])} \mathfrak{F}_{\mu, \lambda}(v)- \mathfrak{F}_{\mu, \lambda}(u)}{r_0^p- \mathfrak{E}_{p,s,Q}(u)^p} <\frac{1}{p}
    \end{equation}
    holds for some $r_0>0.$ This shows that there exists $w \in \mathfrak{E}^{-1}_{p,s,Q}([0, r_0))$ such that 
    \begin{equation}
        \mathfrak{F}_{\mu, \lambda}(u) \leq \sup_{v \in \mathfrak{E}^{-1}_{p,s,Q}([0, r_0])} \mathfrak{F}_{\mu, \lambda}(v) < \mathfrak{F}_{\mu, \lambda}(w)+\frac{r_0^p-\mathfrak{E}_{p,s,Q}(w)^p}{p} 
    \end{equation}
    for every $u \in \mathfrak{E}^{-1}_{p,s,Q}([0, r_0]).$ Therefore, we have
    $$\mathfrak{I}_{\lambda, \mu}(w)<\frac{r_0^p}{p}-\mathfrak{F}_{\mu, \lambda}(u),$$
     for every $u \in \mathfrak{E}^{-1}_{p,s,Q}([0, r_0]).$ 
\end{proof}

\section{ Proof of Theorem \ref{thm1.1}} \label{sec4}

For  a fixed $\mu>0,$ let us define the following  function $g_\mu:(0, \infty) \rightarrow \mathbb{R}$ given by
\begin{equation} \label{rationfun}
    g_{\mu}(r):=\frac{r^{p-1}-\mu C_{Q, s,p^*_s, \Om}^{p_s^*} r^{p_s^*-1} }{a_1 C_{Q, s,p^*_s, \Om} |\Omega|^{\frac{p_s^*-1}{p_s^*}}+a_2C_{Q, s,p^*_s, \Om}^q|\Omega|^{\frac{p_s^*-q}{p_s^*}} r^{q-1}},
\end{equation}
where $C_{Q, s,p^*_s, \Om}$ is the best constant in the fractional subelliptic Sobolev inequality given by \eqref{bestcon}.
It is easy to note that $\lim_{r \rightarrow 0} g_\mu(r)=0$ as $a_1>0$ and $p_s^*>p \geq 2.$ On the other hand, $\lim_{r \rightarrow \infty} g_\mu(r)=-\infty$ from L'H\^opital's rule and $p_s^*>q$. It is easy to see that $g_\mu>0$ in the neighbourhood of $0.$ Indeed, we can find a small enough $r'>0$ (depending on $p$ and $p_s^*$) such that $g_\mu(r)>0$ for $r \in (0, r').$  Therefore, $g_\mu$ has a global maximum, that is,  there exists a  $r_{\mu, \max}$ such that 
$$g_\mu(r_{\mu, \max})=\max_{r>0} g_\mu(r).$$

Set $$r_{0, \mu}:=\min\{r_{\mu, \max}, r_\mu\},$$ where $r_\mu$ is defined by Lemma \ref{weaksemiconti}. By taking any $\lambda < \Lambda_\mu:= g_\mu(r_{0, \mu})$ we can find $r_{0, \mu, \lambda} \in (0, r_{0, \mu})$ such that 
\begin{equation} \label{S42}
    \lambda< \frac{r_{0, \mu, \lambda}^{p-1}-\mu C_{Q, s,p^*_s, \Om}^{p_s^*} r_{0, \mu, \lambda}^{p_s^*-1} }{a_1 C_{Q, s,p^*_s, \Om} |\Omega|^{\frac{p_s^*-1}{p_s^*}}+a_2C_{Q, s,p^*_s, \Om}^q|\Omega|^{\frac{p_s^*-q}{p_s^*}} r_{0, \mu, \lambda}^{q-1}}.
\end{equation}
Now, we take $0<\epsilon< r_{0, \mu, \lambda}$ and define
\begin{equation}
    \Lambda_{\lambda, \mu}(\epsilon, r_{0,\mu, \lambda}):=\frac{\sup_{v \in \mathfrak{E}_{p, s, Q}^{-1}([0, r_{0,\mu, \lambda}] )} \mathfrak{F}_{\mu, \lambda}(v)- \sup_{v \in \mathfrak{E}_{p, s, Q}^{-1}([0, r_{0,\mu, \lambda}-\epsilon]) } \mathfrak{F}_{\mu, \lambda}(v)}{\epsilon}.
\end{equation}
Therefore, by rescaling $v$ and using \eqref{ruz19} and \eqref{ruz20}, we obtain that 
\begin{align*}
    \Lambda_{\lambda, \mu}(\epsilon, r_{0,\mu, \lambda})& \leq \frac{1}{\epsilon} \Bigg| \sup_{v \in \mathfrak{E}_{p, s, Q}^{-1}([0, r_{0,\mu, \lambda}] )} \mathfrak{F}_{\mu, \lambda}(v)- \sup_{v \in \mathfrak{E}_{p, s, Q}^{-1}([0, r_{0,\mu, \lambda}-\epsilon]) } \mathfrak{F}_{\mu, \lambda}(v) \Bigg| \\&
    \leq \sup_{v \in \mathfrak{E}_{p, s, Q}^{-1}([0, 1] )} \int_\Omega \Bigg| \int^{r_{0, \mu, \lambda}v(x)}_{(r_{0,\mu, \lambda}-\epsilon)v(x)} \frac{f_{\mu, \lambda}(x, t)}{\epsilon} d t \Bigg| d x,
\end{align*}
where $f_{\lambda, \mu}(x, t):=\mu |t|^{p_s^*-1}+\lambda h(x, t)$ for a.e. $x \in \Omega$ and for all $t \in \mathbb{R}.$ 
Next, using the subcritical growth condition \eqref{growth} of $h$ along with the subelliptic continuous embedding $X^{s,p}(\Omega) \hookrightarrow L^{p_s^*}(\Omega)$ and H\"older inequality we get
\begin{align*}
    \Lambda_{\lambda, \mu}(\epsilon, r_{0,\mu, \lambda})&\leq \sup_{v \in \mathfrak{E}_{p, s, Q}^{-1}([0, 1] )} \int_\Omega \Big| \int^{r_{0, \mu, \lambda}v(x)}_{(r_{0,\mu, \lambda}-\epsilon)v(x)} \frac{f_{\mu, \lambda}(x, t)}{\epsilon} d t \Big| d x\\& \leq \frac{1}{\epsilon} \sup_{v \in \mathfrak{E}_{p, s, Q}^{-1}([0, 1] )} \int_\Omega \Big| \int^{r_{0, \mu, \lambda}v(x)}_{(r_{0,\mu, \lambda}-\epsilon)v(x)} \left[ \mu |t|^{p_s^*-1} +\lambda (a_1+a_2|t|^{q-1})) \right] d t \Big| d x \\&\leq \frac{\mu}{p_s^*} \sup_{v \in \mathfrak{E}_{p, s, Q}^{-1}([0, 1] )} \|v\|_{L^{p_s^*}(\Omega)}^{p_s^*} \left( \frac{r_{0,\mu, \lambda}^{p_s^*}-(r_{0,\mu, \lambda}-\epsilon)^{p_s^*}}{\epsilon} \right)\\&+ \lambda \sup_{v \in \mathfrak{E}_{p, s, Q}^{-1}([0, 1] )} \Bigg( a_1  \|v\|_{L^{1}(\Omega)} + \frac{a_2}{q}  \|v\|_{L^{q}(\Omega)}^q \left( \frac{r_{0,\mu, \lambda}^{q}-(r_{0,\mu, \lambda}-\epsilon)^{q}}{\epsilon} \right)\Bigg) \\&\leq \frac{\mu C_{Q, s, p_s^*, \Omega}^{p_s^*}}{p_s^*} \left( \frac{r_{0,\mu, \lambda}^{p_s^*}-(r_{0,\mu, \lambda}-\epsilon)^{p_s^*}}{\epsilon} \right)+ \lambda \Bigg( a_1 C_{Q, s, p_s^*, \Omega} |\Omega|^{\frac{p_s^*-1}{p_s^*}}\\&+a_2 \frac{C_{Q, s, p_s^*, \Omega}^q}{q} \left( \frac{r_{0,\mu, \lambda}^{q}-(r_{0,\mu, \lambda}-\epsilon)^{q}}{\epsilon} \right)|\Omega|^{\frac{p_s^*-q}{p_s^*}}\Bigg),
\end{align*} where $C_{Q, s, p_s^*, \Omega}$ is the best constant in the embedding $X^{s,p}(\Omega) \hookrightarrow L^{p_s^*}(\Omega).$ Therefore, 
\begin{align} \label{S44}
 \nonumber   \limsup_{\epsilon \rightarrow 0} \Lambda_{\lambda, \mu}(\epsilon, r_{0,\mu, \lambda}) &\leq \mu C_{Q, s, p_s^*, \Omega}^{p_s^*} r_{0,\mu, \lambda}^{p_s^*-1} + \lambda \Bigg( a_1 C_{Q, s, p_s^*, \Omega} |\Omega|^{\frac{p_s^*-1}{p_s^*}}+a_2 C_{Q, s, p_s^*, \Omega}^q r_{0,\mu, \lambda}^{q-1}|\Omega|^{\frac{p_s^*-q}{p_s^*}}\Bigg)
   \\& < r_{0, \mu, \lambda}^{p-1},
\end{align} by \eqref{S42}. Combining \eqref{S44} with Lemma \ref{Le3.3} and Lemma \ref{le3.4}, we deduce that 
\begin{equation}
     \inf_{u \in \mathfrak{E}^{-1}_{p,s,Q}([0, r_{0, \mu, \lambda}))} \frac{\sup_{v \in \mathfrak{E}^{-1}_{p,s,Q}([0, r_{0, \mu, \lambda}])} \mathfrak{F}_{\mu, \lambda}(v)- \mathfrak{F}_{\mu, \lambda}(u)}{r_{0, \mu, \lambda}^p- \mathfrak{E}_{p,s,Q}(u)^p} <\frac{1}{p}.
\end{equation}
Therefore, from  Lemma \ref{Le3.5} there exists $w_{\mu, \lambda} \in \mathfrak{E}^{-1}_{p,s,Q}([0, r_{0, \mu, \lambda}))$ such that 
    \begin{equation}
        \mathfrak{I}_{\lambda, \mu}(w_{\mu, \lambda})=\frac{1}{p} \mathfrak{E}_{p,s,Q}(w_{\mu, \lambda})^p- \mathfrak{F}_{\mu, \lambda}(w_{\mu, \lambda})<\frac{r_{0, \mu, \lambda}^p}{p}- \mathfrak{F}_{\mu, \lambda}(u)
    \end{equation}
    for every $u \in \mathfrak{E}^{-1}_{p,s,Q}([0, r_{0, \mu, \lambda}]).$ By recalling that  $r_{0, \mu, \lambda}< r_\mu$ and 
$$\mathfrak{I}_{\mu, \lambda}(u)=\mathfrak{L}_{\mu}(u)-\lambda \int_{\Omega} H(x, u(x)) d x,$$ 
we infer from Lemma \ref{weaksemiconti} that the energy functional $\mathfrak{I}_{\mu, \lambda}$ is sequentially weakly lower semicontinuous on $\overline{B_{X_0^{s,p}(\Omega)}(0, r_{0, \mu, \lambda})}.$ Thus, the restriction $\mathfrak{I}_{\mu, \lambda}|_{\overline{B_{X_0^{s,p}(\Omega)}(0, r_{0, \mu, \lambda})}}$ has a global minimum $u_{0, \mu, \lambda}$ in $\overline{B_{X_0^{s,p}(\Omega)}(0, r_{0, \mu, \lambda})}.$ Now, we claim that $u_{0, \mu, \lambda} \in B_{X_0^{s,p}(\Omega)}(0, r_{0, \mu, \lambda}).$ To prove this, suppose that $\mathfrak{E}_{p,s,Q}(u_{0, \mu, \lambda}):=\|u_{0, \mu, \lambda}\|_{X^{s,p}_0(\Omega)}=r_{0, \mu, \lambda}$. Then by \eqref{P323} we have 
$$ \mathfrak{I}_{\lambda, \mu}(u_{0, \mu, \lambda})=\frac{1}{p} r_{0, \mu, \lambda}^p- \mathfrak{F}_{\mu, \lambda}(u_{0, \mu, \lambda})> \mathfrak{I}_{\lambda, \mu}(w_{\mu, \lambda}),$$
contradicting the fact that $u_{0, \mu, \lambda}$ is a global minimum for $\mathfrak{I}_{\mu, \lambda}|_{\overline{B_{X_0^{s,p}(\Omega)}(0, r_{0, \mu, \lambda})}}.$ Therefore, we conclude that $u_{0, \mu,\lambda} \in X^{s,p}_0(\Omega)$ is a local minimum for the energy functional $\mathfrak{I}_{\mu, \lambda}$ with 
$$\mathfrak{E}_{p,s,Q}(u_{0, \mu, \lambda}):=\|u_{0, \mu, \lambda}\|_{X^{s,p}_0(\Omega)}<r_{0, \mu, \lambda}$$ and 
hence, a weak solution to the problem \eqref{pro1}. This completes the proof.\hfill\qed

\section{Some implications and applications} \label{sec5}
This section presents some applications and implications of Theorem \ref{thm1.1}.
The first application of Theorem \ref{thm1.1} we will present is the following result, which determines the sign of the solution in Theorem \ref{thm1.1} provided that we assume an extra condition for the Carath\'eodory function $h$ given by \eqref{growth}.
\begin{theorem} \label{subthm}
     Let $\Omega$ be a bounded open subset of a stratified Lie group $\G$ with the homogeneous dimension $Q.$ Let $s \in (0, 1),$ $\frac{Q}{s} >p \geq 2$ and $q \in [1,p_s^*),$ where $p_s^*=\frac{pQ}{Q-ps}.$ Suppose that $h:\Omega \times \mathbb{R} \rightarrow \mathbb{R}$ is a Carath\'eodory function such that 
    \begin{equation} \label{growth1}
        |h(x, t)| \leq a_1+a_2|t|^{q-1}\quad \text{for}\,\,\text{a.e.}\,\,x\in \Omega,\,\, \forall\, t \in \mathbb{R}
    \end{equation}
    for some $a_1, a_2>0$   and $h(x, 0)=0$ for a.e. $x \in \Omega.$ 
    Then, for every $\mu>0$ and sufficiently small $\lambda>0,$  the following nonlocal problem
    \begin{equation} \label{pro12}
       \begin{cases} 
(-\Delta_{\mathbb{G}, p})^s u= \mu |u|^{p_s^*-2}u+\lambda h(x, u) \quad &\text{in}\quad \Omega, \\
u=0\quad & \text{in}\quad \mathbb{G}\backslash \Omega,
\end{cases}
    \end{equation} has a nonnegative weak solution $u_{\mu, \lambda} \geq 0$ and a nonpositive weak solution $v_{\mu, \lambda} \leq 0$ in $X^{s,p}_0(\Omega).$   In addition, if 
     \begin{equation} \label{growth1-1}
        \liminf\limits_{t\rightarrow 0^+}\frac{H(x,t)}{t^p}=+\infty,~\text{ for a.e. } x\in\Omega, ~\text{where}~ H(x, t):=\int_0^t h(x, \tau)\, d \tau,
    \end{equation} holds, then $u_{\mu, \lambda}$ and $v_{\mu, \lambda}$ are nontrivial.
    
\end{theorem}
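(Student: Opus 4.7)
The plan is to produce signed solutions via truncation of $h$ and the critical nonlinearity, reducing matters to Theorem \ref{thm1.1}, and then use the superlinearity at zero encoded in (\ref{growth1-1}) to exhibit a test function at which the energy is strictly negative, forcing nontriviality of the local minimizer.

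For the sign via truncation, let $\tilde{h}^+(x,t):=h(x,t)\chi_{\{t\geq 0\}}$, which is Carath\'eodory (using $h(x,0)=0$ for continuity in $t$ at $0$) and inherits the growth (\ref{growth1}) with the same constants. Replacing also $|u|^{p_s^*-2}u$ by $(u^+)^{p_s^*-1}$, I apply Theorem \ref{thm1.1} to the resulting truncated problem and obtain $\Lambda_\mu>0$ and a local minimizer $u_{\mu,\lambda}\in X^{s,p}_0(\Omega)$ of the truncated energy functional on $\overline{B_{X_0^{s,p}(\Omega)}(0,r_{0,\mu,\lambda})}$. Testing the weak equation with $u_{\mu,\lambda}^-$, the right-hand side vanishes (since $(u^+)^{p_s^*-1}u^-=0$ and $\tilde{h}^+(x,u)u^-=0$), while the left-hand side is bounded above by $-[u_{\mu,\lambda}^-]^p_{s,p,\mathbb{G}}$ thanks to the elementary pointwise inequality
\begin{equation*}
|a-b|^{p-2}(a-b)(a^--b^-)\leq -|a^--b^-|^p,\qquad a,b\in\mathbb{R}.
\end{equation*}
Hence $u_{\mu,\lambda}\geq 0$ a.e., and on $\{u_{\mu,\lambda}\geq 0\}$ the truncated and original nonlinearities coincide, so $u_{\mu,\lambda}$ is a weak solution of (\ref{pro12}). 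The nonpositive solution $v_{\mu,\lambda}$ is produced by the symmetric truncation to $\{t\leq 0\}$ and the sign-flipped critical part.

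For the nontriviality under (\ref{growth1-1}), since $u_{\mu,\lambda}$ minimizes the truncated functional on a ball where the functional vanishes at the origin, it suffices to exhibit $\sigma>0$ and $\phi\in C^\infty_c(\Omega)$ with $\phi\geq 0$, $\phi\not\equiv 0$, $\|\phi\|_\infty\leq 1$, and $\sigma\phi$ in the ball, such that
\begin{equation*}
\mathfrak{I}_{\mu,\lambda}(\sigma\phi)=\frac{\sigma^p}{p}[\phi]^p_{s,p,\mathbb{G}}-\frac{\mu\sigma^{p_s^*}}{p_s^*}\|\phi\|^{p_s^*}_{L^{p_s^*}(\Omega)}-\lambda\int_\Omega H(x,\sigma\phi(x))\,dx<0.
\end{equation*}
Dividing by $\sigma^p$, the first summand is a positive constant, the second vanishes as $\sigma\to 0^+$ because $p_s^*>p$, so the task reduces to showing $\liminf_{\sigma\to 0^+}\sigma^{-p}\int_\Omega H(x,\sigma\phi)\,dx=+\infty$. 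For a.e.\ $x\in\{\phi>0\}$, (\ref{growth1-1}) gives
$H(x,\sigma\phi(x))/\sigma^p=\phi(x)^p\cdot H(x,\sigma\phi(x))/(\sigma\phi(x))^p\to +\infty$,
so Fatou's lemma applied to the nonnegative integrand $[H(x,\sigma\phi)/\sigma^p]^+$ already yields $\liminf_{\sigma\to 0^+}\sigma^{-p}\int_\Omega [H(x,\sigma\phi)]^+\,dx=+\infty$.

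The main obstacle is the control of the negative part $\sigma^{-p}\int_\Omega [H(x,\sigma\phi)]^-\,dx$, since the naive bound $|H|/\sigma^p\leq a_1\sigma^{1-p}\phi+(a_2/q)\sigma^{q-p}\phi^q$ diverges as $\sigma\to 0^+$ when $p>1$, so dominated convergence cannot be invoked directly. The strategy is to exploit that $[H(x,\sigma\phi)]^-\to 0$ pointwise a.e.\ (since, by (\ref{growth1-1}), $H(x,\sigma\phi(x))\geq 0$ once $\sigma$ is small enough, depending on $x$), and to quantify the rate by an Egoroff-type argument on the measurable sets $E_{M,\delta}:=\{x\in\Omega:H(x,t)\geq Mt^p\ \forall\,t\in(0,\delta)\}$, whose complement satisfies $|\Omega\setminus E_{M,\delta}|\to 0$ as $\delta\to 0^+$ for every fixed $M$. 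Coupling this with the sharper local bound $|h(x,t)|\leq \eta$ on a further Egoroff subset of large measure (a consequence of continuity of $h$ in $t$ and $h(x,0)=0$), and carefully balancing the parameters $M,\eta,\delta,\sigma$ against the measures of the exceptional sets, gives $\mathfrak{I}_{\mu,\lambda}(\sigma\phi)<0$ for some sufficiently small $\sigma$. This forces $u_{\mu,\lambda}\not\equiv 0$, and $v_{\mu,\lambda}\not\equiv 0$ follows symmetrically.
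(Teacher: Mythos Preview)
Your truncation and sign argument is essentially the paper's own: define $h_+$ (zero for $t\le 0$), replace the critical term by $(u^+)^{p_s^*-1}$, apply Theorem~\ref{thm1.1} to the truncated functional, and test the Euler--Lagrange equation with $u_{\mu,\lambda}^-$. The pointwise inequality you use, $|a-b|^{p-2}(a-b)(a^--b^-)\le -|a^--b^-|^p$, is exactly what the paper derives (via $(a-b)(a^--b^-)\le -(a^--b^-)^2$ together with $|a-b|\ge |a^--b^-|$ and $p\ge 2$). The nonpositive solution is obtained symmetrically. This part is correct and matches the paper.

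For nontriviality the paper is much more direct than you are. It reads \eqref{growth1-1} as giving, for each $M>0$, a single $\delta>0$ with $H(x,t)\ge Mt^p$ for all $t\in(0,\delta)$ and a.e.\ $x$; then for $t<\delta/\sup_\Omega w$ one simply bounds
\[
\mathfrak I_{\mu,\lambda}^+(tw)\le \tfrac{t^p}{p}\|w\|_{X_0^{s,p}}^p-\lambda M t^p\|w\|_{L^p}^p-\tfrac{\mu}{p_s^*}t^{p_s^*}\|w\|_{L^{p_s^*}}^{p_s^*}<0
\]
once $M$ is taken large. No Fatou, no Egoroff, no splitting into positive and negative parts of $H$.

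Your more elaborate route stems from interpreting \eqref{growth1-1} strictly pointwise, so that the threshold $\delta$ depends on $x$. The Fatou step for $[H(x,\sigma\phi)]^+/\sigma^p$ is correct and harmless. The problem is the negative part. Your Egoroff\,/\,balancing scheme does not close: on the exceptional set $\Omega\setminus E_{M,\delta}$ the only available bound is $|H(x,\sigma\phi)|/\sigma^p\le C\sigma^{1-p}$, while the constraint coming from $E_{M,\delta}$ forces $\sigma<\delta/\|\phi\|_\infty$. Absorbing the bad term therefore requires $|\Omega\setminus E_{M,\delta}|=o(\delta^{p-1})$, a quantitative rate that is \emph{not} implied by the pointwise hypothesis; the ``sharper local bound $|h(x,t)|\le\eta$'' you invoke still carries the same $\sigma^{1-p}$ prefactor and does not help. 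In fact, for large $p$ one can construct $h$ satisfying all hypotheses (pointwise) for which $\sigma^{-p}\int_\Omega H(x,\sigma\phi)\,dx\not\to+\infty$ for a fixed test function $\phi$. So the ``careful balancing'' you assert cannot be carried out in general; the paper's argument, like its Euclidean antecedents, implicitly relies on the uniform-in-$x$ reading of \eqref{growth1-1}.
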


\begin{proof}
    We begin by proving the existence of a solution to the problem \eqref{pro12}. Indeed, as we will be dealing with nonnegative solutions, it is sufficient to consider the function
    $$H_+(x, t):=\int_0^t h_+(x, \tau)\, d \tau,$$
    where $$h_+(x, \tau):=\begin{cases}
        h(x, \tau) &\text{if}\quad \tau >0, \\
        0&\text{if} \quad \tau \leq 0,
    \end{cases}$$ for almost every $x \in \Omega$ and $\tau \in \mathbb{R}.$
It is easy to see that $h_+$ is a Carath\'eodory function and satisfies the condition \eqref{growth1} with $h_+(x, 0)=0.$

Now, we define the functional $\mathfrak{I}_{\mu, \lambda}^+:X_0^{s,p}(\Omega) \rightarrow \mathbb{R}$ as 

    \begin{equation}
        \mathfrak{I}_{\mu, \lambda}^+(u):= \frac{1}{p} \iint_{\G \times \G} \frac{|u(x)-u(y)|^p}{|y^{-1} x|^{Q+sp}} d x d y -\frac{\mu}{p_s^*} \int_{\Omega} (u(x)^+)^{p_s^*} d x-\lambda \int_{\Omega} H_+(x, u(x)) d x,
    \end{equation} for every $u \in X^{s,p}_0(\Omega).$
    Note that $\mathfrak{I}_{\mu, \lambda}^+$ is Fr\'echet differentiable and therefore, we have 
     \begin{align} \label{M1}
   \nonumber   \langle  (\mathfrak{I}_{\mu, \lambda}^+)^{'} (u), \varphi \rangle &:= \iint_{\G \times \G} \frac{|u(x)-u(y)|^{p-2}(u(x)-u(y))(\varphi(x)-\varphi(y))}{|y^{-1} x|^{Q+sp}} d x d y \\&-\mu \int_{\Omega} (u(x)^+)^{p_s^*-1}  \varphi(x) d x-\lambda \int_{\Omega} h_+(x, u(x)) \varphi(x) d x,
       \end{align} for every $\varphi \in X^{s,p}_0(\Omega).$ Then, as an application of Theorem \ref{thm1.1}, for every $\mu>0$ and for sufficiently small $\lambda>0,$ there exists a critical point $u_{\mu, \lambda} \in X^{s,p}_0(\Omega)$ of  $\mathfrak{I}_{\mu, \lambda}^+$ and therefore, a weak solution of the problem \eqref{pro12} with $h_+$.

       It remains to show that the obtained solution $u_{\mu, \lambda}$ is nonnegative on $\Omega.$ For this purpose, let us take $\varphi$ as 
       $\varphi:=u_{\mu, \lambda}^{-}:=\max\{-u_{\mu, \lambda}, 0\}.$ We note $u_{\mu, \lambda} \in X^{s,p}_0(\Omega) $ and $||u_{\mu, \lambda}(x)|-|u_{\mu, \lambda}(y)|| \leq |u_{\mu, \lambda}(x)-u_{\mu, \lambda}(y)|$ imply that $|u_{\mu, \lambda}| \in X^{s,p}_0(\Omega).$  Since $|u_{\mu, \lambda}|$ and $u_{\mu, \lambda}$ belong to $X^{s,p}_0(\Omega),$ it is clear, using the fact that $X^{s,p}_0(\Omega)$ is a vector space,  that $u_{\mu, \lambda}^{-} \in X^{s,p}_0(\Omega).$ 
Thus, we test \eqref{M1} with $\varphi= u_{\mu, \lambda}^{-}$  and use the fact that $u_{\mu, \lambda}$ is a critical point of $\mathfrak{I}_{\mu, \lambda}^+,$ that is, $\langle  (\mathfrak{I}_{\mu, \lambda}^+)^{'} (u), \varphi \rangle=0$ for all $\varphi \in X_0^{s,p}(\Omega)$, to  obtain that 
       \begin{align}  \label{ruz5.5}
   \nonumber  0=& \langle  (\mathfrak{I}_{\mu, \lambda}^+)^{'} (u_{\mu, \lambda}), u_{\mu, \lambda}^{-} \rangle \\&= \nonumber  \iint_{\G \times \G} \frac{|u_{\mu, \lambda}(x)-u_{\mu, \lambda}(y)|^{p-2}(u_{\mu, \lambda}(x)-u_{\mu, \lambda}(y))(u_{\mu, \lambda}^{-}(x)-u_{\mu, \lambda}^{-}(y))}{|y^{-1} x|^{Q+sp}} d x d y \nonumber \\&\quad\quad-\mu \int_{\Omega} (u_{\mu, \lambda}(x)^+)^{p_s^*-1}  (u_{\mu, \lambda})(x)^{-} d x-\lambda \int_{\Omega} h_+(x, u_{\mu, \lambda}(x)) u_{\mu, \lambda}^{-}(x) d x\nonumber  \\&\leq  \iint_{\G \times \G} \frac{|u_{\mu, \lambda}(x)-u_{\mu, \lambda}(y)|^{p-2}(u_{\mu, \lambda}(x)-u_{\mu, \lambda}(y))(u_{\mu, \lambda}^{-}(x)-u_{\mu, \lambda}^{-}(y))}{|y^{-1} x|^{Q+sp}} d x d y,
       \end{align}
      using the definition of $h_+$ with the fact the $u_{\mu, \lambda}^{-}=0$ when $u_{\mu, \lambda} \geq 0.$
      Now, we use the inequality $(a-b)(a^--b^-) \leq -(a^- -b^-)^2$ in \eqref{ruz5.5} to get 
      \begin{align*}
          \nonumber  0=& \langle  (\mathfrak{I}_{\mu, \lambda}^+)^{'} (u_{\mu, \lambda}), u_{\mu, \lambda}^{-} \rangle   \\&\leq - \iint_{\G \times \G} \frac{|u_{\mu, \lambda}(x)-u_{\mu, \lambda}(y)|^{p-2}(u_{\mu, \lambda}^{-}(x)-u_{\mu, \lambda}^{-}(y))^2}{|y^{-1} x|^{Q+sp}} d x d y \nonumber \\&\leq
          - \|u_{\mu, \lambda}^{-}\|_{X_0^{s,p}(\Omega)}^p,
      \end{align*}
      
\noindent which implies that 
\begin{align}
     \|u_{\mu, \lambda}^{-}\|_{X_0^{s,p}(\Omega)} = 0,
\end{align} which further shows that $u_{\mu, \lambda}^{-}(x)=0$ for a.e. $x \in \Omega$ leading to the conclusion that $u_{\mu, \lambda}(x)\geq 0$ for a.e. $x \in \Omega.$  
We will now prove that $u_{\mu, \lambda} \not\equiv 0$ in $\Omega$. Consider, $w\in C_c^\infty(\Omega)$ such that $w\geq 0$ and $w\not\equiv 0$. Since, $\liminf\limits_{t\rightarrow 0^+}\frac{H(x,t)}{t^p}=+\infty$ from \eqref{growth1-1}, then for all $M>0$ there exists a $\delta>0$ such that 
$$H(x,t)\geq M t^p,~\forall\, t\in(0,\delta).$$
Thus choosing $t\in(0,\frac{\delta}{\sup_\Omega w})$, we get
\begin{align*}
      \mathfrak{I}_{\mu, \lambda}^+ (tw)&=\frac{1}{p}\|tw\|^p-\frac{\mu}{p_s^*}\int_\Omega|tw|^{p_s^*}dx-\lambda \int_\Omega H^+(x,tw)dx\\
    &\leq \frac{t^p}{p}\|w\|^p-\lambda M \|w\|^p_{L^p(\Omega)}t^p-\frac{\mu}{p_s^*}t^{p_s^*}\int_\Omega|w|^{p_s^*}dx\\
    &<0~\text{for sufficiently large } M>0.
\end{align*}
Therefore, $0$ cannot be a local minimum of $\mathfrak{I}_{\mu, \lambda}^+$ and hence $u_{\mu, \lambda}(x)$ is nontrivial, nonnegative weak solution to the problem \eqref{pro12}.

For the existence of a nonpositive weak solution, we work with the functional $\mathfrak{I}_{\mu, \lambda}^-:X_0^{s,p}(\Omega) \rightarrow \mathbb{R}$ as 

    \begin{equation}
        \mathfrak{I}_{\mu, \lambda}^-(u):= \frac{1}{p} \iint_{\G \times \G} \frac{|u(x)-u(y)|^p}{|y^{-1} x|^{Q+sp}} d x d y -\frac{\mu}{p_s^*} \int_{\Omega} (u(x)^-)^{p_s^*} d x-\lambda \int_{\Omega} H_-(x, u(x)) d x,
    \end{equation} for every $u \in X^{s,p}_0(\Omega).$ Here the function $H_-$ is given by
    $$H_-(x, t):=\int_0^t h_-(x, \tau)\, d \tau,$$
    where $$h_-(x, \tau):=\begin{cases}
        h(x, \tau) &\text{if}\quad \tau <0 \\
        0&\text{if} \quad \tau \geq 0,
    \end{cases}$$ for almost everywhere $x \in \Omega$ and $\tau \in \mathbb{R}.$ By proceeding the same as for the nonnegative solutions, again using Theorem \ref{thm1.1}, we conclude that for every $\mu>0$ and for sufficiently small $\lambda$, there exists a nontrivial, nonpositive weak solution of problem \eqref{pro12}. This concludes the proof of the theorem.
      \end{proof}
    
\begin{remark}
   It is noteworthy to mention that the condition \eqref{growth1-1} is required for obtaining a nontrivial solution to \eqref{pro12}. Thus, any equivalent condition to \eqref{growth1-1} which also satisfies \eqref{growth1} can be assumed for the existence of a non-trivial solution.
\end{remark} 
      Before proceeding to the proof of Theorem \ref{thmnonmain} we note that the proof of Theorem \ref{thm1.1} still holds if we  consider the nonlocal problem involving the lower order term $h: \Omega \times \mathbb{R} \rightarrow \mathbb{R}$ such that 
      \begin{equation}
          |h(x, t)| \leq a_1+a_2 |t|^{m-1}+a_3 |t|^{q-1}\quad \text{a. e.}\,\,\,x \in \Omega,\, \forall t \in \mathbb{R},
      \end{equation}
      for some positive constants $a_j, j=1,2,3$ and $1<m<p \leq q <p_s^*.$ In fact, we have the following results: 
      \begin{theorem} \label{thm1.1dupl}
    Let $\Omega$ be a bounded open subset of a stratified Lie group $G$ with the homogeneous dimension $Q.$ Let $s \in (0, 1),$ $\frac{Q}{s}>p \geq 2.$  Suppose that $m$ and $q$ are two real constants such that 
$$1 \leq  m<p \leq q<p_s^*.$$ Let $h:\Omega \times \mathbb{R} \rightarrow \mathbb{R}$ be a Carath\'eodory function such that 
    \begin{equation} \label{growthdupl}
       |h(x, t)| \leq a_1+a_2 |t|^{m-1}+a_3 |t|^{q-1}\quad \text{a.e. }\,x \in \Omega,\, \forall\, t \in \mathbb{R},
    \end{equation}
    for some $a_1, a_2, a_3>0.$ Then, for every $\mu>0,$ there exists   $\Sigma_\mu>0$ such that the following nonlocal problem
    \begin{equation} \label{pro1dump}
       \begin{cases} 
(-\Delta_{\mathbb{G}, p})^s u= \mu |u|^{p_s^*-2}u+\lambda h(x, u) \quad &\text{in}\quad \Omega, \\
u=0\quad & \text{in}\quad \mathbb{G}\backslash \Omega,
\end{cases}
    \end{equation} admits at least one  weak solution on $u_\lambda \in X^{s,p}_0(\Omega)$ for every $0<\lambda < \Sigma_\mu,$ which is local minimum of the energy functional 
    \begin{equation}
        \mathfrak{I}_{\mu, \lambda}(u):= \frac{1}{p} \iint_{\G \times \G} \frac{|u(x)-u(y)|^p}{|y^{-1} x|^{Q+sp}} d x d y -\frac{\mu}{p_s^*} \int_{\Omega} |u(x)|^{p_s^*} d x-\lambda \int_{\Omega} \int_0^{u(x)} h(x, \tau)d \tau,
    \end{equation} for every $u \in X^{s,p}_0(\Omega).$
\end{theorem}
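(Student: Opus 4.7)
The plan is to adapt the argument used in the proof of Theorem \ref{thm1.1} (Section \ref{sec4}) nearly verbatim, the only genuine modification being that the rational function $g_\mu$ in \eqref{rationfun} must be replaced by one with a denominator that accommodates the additional growth term $a_2|t|^{m-1}$ from \eqref{growthdupl}. Concretely, I would define $g_\mu : (0,\infty) \to \mathbb{R}$ by
\[
g_\mu(r) := \frac{r^{p-1} - \mu C^{p_s^*} r^{p_s^*-1}}{a_1 C |\Omega|^{\frac{p_s^*-1}{p_s^*}} + a_2 C^m |\Omega|^{\frac{p_s^*-m}{p_s^*}} r^{m-1} + a_3 C^q |\Omega|^{\frac{p_s^*-q}{p_s^*}} r^{q-1}},
\]
where $C := C_{Q,s,p^*_s,\Omega}$. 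First I would check that $\lim_{r \to 0^+} g_\mu(r) = 0$ (the denominator stays bounded away from zero as $r \to 0^+$ since $a_1 > 0$, while the numerator is $O(r^{p-1})$), that $g_\mu(r) > 0$ on a right neighborhood of the origin (because $p_s^* > p$, so the numerator is positive for small $r$), and that $\lim_{r \to \infty} g_\mu(r) = -\infty$, which follows via L'H\^opital from the inequalities $p_s^* - 1 > q - 1 \geq p - 1 > m - 1$ guaranteed by the hypothesis $1 \leq m < p \leq q < p_s^*$. Consequently $g_\mu$ attains a positive global maximum at some $r_{\mu,\max} > 0$, and one may set $r_{0,\mu} := \min\{r_{\mu,\max}, r_\mu\}$ with $r_\mu$ supplied by Lemma \ref{weaksemiconti}, and $\Sigma_\mu := g_\mu(r_{0,\mu}) > 0$.

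For any $\lambda \in (0, \Sigma_\mu)$, one then selects $r_{0,\mu,\lambda} \in (0, r_{0,\mu}]$ with $\lambda < g_\mu(r_{0,\mu,\lambda})$. The estimate of $\Lambda_{\lambda,\mu}(\epsilon, r_{0,\mu,\lambda})$ performed in Section \ref{sec4} goes through with the only change being that $|h(x, t)|$ now yields three terms rather than two; using the continuous subelliptic Sobolev embedding from Theorem \ref{l-3} (applied with exponents $1$, $m$, and $q$, which all lie in $[1, p_s^*)$) together with H\"older's inequality, I would arrive at
\[
\limsup_{\epsilon \to 0^+} \Lambda_{\lambda,\mu}(\epsilon, r_{0,\mu,\lambda}) \leq \mu C^{p_s^*} r_{0,\mu,\lambda}^{p_s^*-1} + \lambda\!\left( a_1 C |\Omega|^{\frac{p_s^*-1}{p_s^*}} + a_2 C^m |\Omega|^{\frac{p_s^*-m}{p_s^*}} r_{0,\mu,\lambda}^{m-1} + a_3 C^q |\Omega|^{\frac{p_s^*-q}{p_s^*}} r_{0,\mu,\lambda}^{q-1} \right) < r_{0,\mu,\lambda}^{p-1},
\]
where the final strict inequality records precisely the choice $\lambda < g_\mu(r_{0,\mu,\lambda})$.

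Finally, I would invoke Lemmas \ref{Le3.3}, \ref{le3.4}, and \ref{Le3.5} in sequence: the $\limsup$ bound above triggers Lemma \ref{Le3.3}, which feeds into Lemma \ref{le3.4}, and Lemma \ref{Le3.5} then produces $w_{\mu,\lambda} \in \mathfrak{E}_{p,s,Q}^{-1}([0, r_{0,\mu,\lambda}))$ satisfying the strict inequality \eqref{P323}. Since $r_{0,\mu,\lambda} \leq r_\mu$, Lemma \ref{weaksemiconti} ensures that $\mathfrak{I}_{\mu,\lambda}$ is sequentially weakly lower semicontinuous on the closed ball $\overline{B_{X_0^{s,p}(\Omega)}(0, r_{0,\mu,\lambda})}$, so by the Eberlein–\v{S}mulian/reflexivity argument it attains a global minimum $u_{0,\mu,\lambda}$ on this ball. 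The boundary-exclusion step from Section \ref{sec4}, applied with $w_{\mu,\lambda}$ as competitor, rules out $\|u_{0,\mu,\lambda}\|_{X_0^{s,p}(\Omega)} = r_{0,\mu,\lambda}$, so $u_{0,\mu,\lambda}$ is in fact a local minimum of $\mathfrak{I}_{\mu,\lambda}$ on the whole space $X_0^{s,p}(\Omega)$, hence a weak solution of \eqref{pro1dump}. The main technical hurdle is purely bookkeeping: tracking the new middle term throughout the $\Lambda_{\lambda,\mu}$ estimate and confirming that the enlarged denominator of $g_\mu$ still produces the correct asymptotics at $0$ and at $\infty$; no new analytic ingredient beyond those already developed in Section \ref{sec4} is required.
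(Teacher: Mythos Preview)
Your proposal is correct and follows essentially the same approach as the paper's own proof: the paper likewise redefines $g_\mu$ with the three-term denominator accommodating the extra $a_2|t|^{m-1}$ growth, checks the same asymptotics at $0$ and $\infty$, and then reruns the argument of Section~\ref{sec4} verbatim through Lemmas~\ref{Le3.3}, \ref{le3.4}, \ref{Le3.5} and the weak lower semicontinuity Lemma~\ref{weaksemiconti}. The only change is bookkeeping for the middle term in the $\Lambda_{\lambda,\mu}$ estimate, exactly as you describe.
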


\begin{proof} The proof of this theorem is a readaptation of the proof of Theorem \ref{thm1.1}. For the completeness of the paper, we present it here. 
    For  a fixed $\mu>0,$ let us define the following  function $g_\mu:(0, \infty) \rightarrow \mathbb{R}$ given by
\begin{equation} \label{rationfun2}
    g_{\mu}(r):=\frac{r^{p-1}-\mu C_{Q, s,p^*_s, \Om}^{p_s^*} r^{p_s^*-1} }{a_1 C_{Q, s,p^*_s, \Om} |\Omega|^{\frac{p_s^*-1}{p_s^*}}+a_2C_{Q, s,p^*_s, \Om}^m|\Omega|^{\frac{p_s^*-m}{p_s^*}} r^{m-1}+a_3C_{Q, s,p^*_s, \Om}^q|\Omega|^{\frac{p_s^*-q}{p_s^*}} r^{q-1}},
\end{equation}
where $C_{Q, s,p^*_s, \Om}$ is the best constant in the fractional subelliptic Sobolev inequality given by \eqref{bestcon}. Since $p_s^*>q \geq p>m \geq 1,$
we note that $\lim_{r \rightarrow 0} g_\mu(r)=0$  and  $\lim_{r \rightarrow \infty} g_\mu(r)=-\infty.$ Arguing similarly to the proof of Theorem \ref{thm1.1}, we deduce that $g_\mu$ has a global maximum, that is,  there exists a  $r_{\mu, \max}$ such that 
$$g_\mu(r_{\mu, \max})=\max_{r>0} g_\mu(r).$$

Set $$r_{0, \mu}:=\min\{r_{\mu, \max}, r_\mu\},$$ where $r_\mu$ is defined by Lemma \ref{weaksemiconti}. By taking any $\lambda < \Sigma_\mu:= g_\mu(r_{0, \mu})$ we can find $r_{0, \mu, \lambda} \in (0, r_{0, \mu})$ such that 
\begin{equation} \label{S42a}
    \lambda< \frac{r_{0, \mu, \lambda}^{p-1}-\mu C_{Q, s,p^*_s, \Om}^{p_s^*} r_{0, \mu, \lambda}^{p_s^*-1} }{a_1 C_{Q, s,p^*_s, \Om} |\Omega|^{\frac{p_s^*-1}{p_s^*}}+a_2C_{Q, s,p^*_s, \Om}^m|\Omega|^{\frac{p_s^*-m}{p_s^*}} r_{0, \mu, \lambda}^{m-1}+a_3C_{Q, s,p^*_s, \Om}^q|\Omega|^{\frac{p_s^*-q}{p_s^*}} r_{0, \mu, \lambda}^{q-1}}.
\end{equation}
Now, we take $0<\epsilon< r_{0, \mu, \lambda}$ and define
\begin{equation}
    \Lambda_{\lambda, \mu}(\epsilon, r_{0,\mu, \lambda}):=\frac{\sup_{v \in \mathfrak{E}_{p, s, Q}^{-1}([0, r_{0,\mu, \lambda}] )} \mathfrak{F}_{\mu, \lambda}(v)- \sup_{v \in \mathfrak{E}_{p, s, Q}^{-1}([0, r_{0,\mu, \lambda}-\epsilon]) } \mathfrak{F}_{\mu, \lambda}(v)}{\epsilon}.
\end{equation}
Therefore, by rescaling $v$ and using \eqref{ruz19} and \eqref{ruz20}, we obtain that 
\begin{align*}
    \Lambda_{\lambda, \mu}(\epsilon, r_{0,\mu, \lambda})& \leq \frac{1}{\epsilon} \Bigg| \sup_{v \in \mathfrak{E}_{p, s, Q}^{-1}([0, r_{0,\mu, \lambda}] )} \mathfrak{F}_{\mu, \lambda}(v)- \sup_{v \in \mathfrak{E}_{p, s, Q}^{-1}([0, r_{0,\mu, \lambda}-\epsilon]) } \mathfrak{F}_{\mu, \lambda}(v) \Bigg| \\&
    \leq \sup_{v \in \mathfrak{E}_{p, s, Q}^{-1}([0, 1] )} \int_\Omega \Bigg| \int^{r_{0, \mu, \lambda}v(x)}_{(r_{0,\mu, \lambda}-\epsilon)v(x)} \frac{f_{\mu, \lambda}(x, t)}{\epsilon} d t \Bigg| d x,
\end{align*}
where $f_{\lambda, \mu}(x, t):=\mu |t|^{p_s^*-1}+\lambda h(x, t)$ for a.e. $x \in \Omega$ and for all $t \in \mathbb{R}.$ 
Next, using the subcritical growth condition \eqref{growth} of $h$ along with the subelliptic continuous embedding $X^{s,p}(\Omega) \hookrightarrow L^{p_s^*}(\Omega)$ and H\"older inequality we get
\begin{align*}
    \Lambda_{\lambda, \mu}(\epsilon, r_{0,\mu, \lambda})&\leq \sup_{v \in \mathfrak{E}_{p, s, Q}^{-1}([0, 1] )} \int_\Omega \Big| \int^{r_{0, \mu, \lambda}v(x)}_{(r_{0,\mu, \lambda}-\epsilon)v(x)} \frac{f_{\mu, \lambda}(x, t)}{\epsilon} d t \Big| d x\\& \leq \frac{1}{\epsilon} \sup_{v \in \mathfrak{E}_{p, s, Q}^{-1}([0, 1] )} \int_\Omega \Big| \int^{r_{0, \mu, \lambda}v(x)}_{(r_{0,\mu, \lambda}-\epsilon)v(x)} \left[ \mu |t|^{p_s^*-1} +\lambda (a_1+ a_2|t|^{m-1}+a_3|t|^{q-1})) \right] d t \Big| d x \\&\leq \frac{\mu}{p_s^*} \sup_{v \in \mathfrak{E}_{p, s, Q}^{-1}([0, 1] )} \|v\|_{L^{p_s^*}(\Omega)}^{p_s^*} \left( \frac{r_{0,\mu, \lambda}^{p_s^*}-(r_{0,\mu, \lambda}-\epsilon)^{p_s^*}}{\epsilon} \right)+ \lambda \sup_{v \in \mathfrak{E}_{p, s, Q}^{-1}([0, 1] )} \Bigg( a_1  \|v\|_{L^{1}(\Omega)}  \\&+\frac{a_2}{m}  \|v\|_{L^{m}(\Omega)}^m \left( \frac{r_{0,\mu, \lambda}^{m}-(r_{0,\mu, \lambda}-\epsilon)^{m}}{\epsilon} \right)+\frac{a_3}{q}  \|v\|_{L^{q}(\Omega)}^q \left( \frac{r_{0,\mu, \lambda}^{q}-(r_{0,\mu, \lambda}-\epsilon)^{q}}{\epsilon} \right)\Bigg) \\&\leq \frac{\mu C_{Q, s, p_s^*, \Omega}^{p_s^*}}{p_s^*} \left( \frac{r_{0,\mu, \lambda}^{p_s^*}-(r_{0,\mu, \lambda}-\epsilon)^{p_s^*}}{\epsilon} \right)\\&+ \lambda \Bigg( a_1 C_{Q, s, p_s^*, \Omega} |\Omega|^{\frac{p_s^*-1}{p_s^*}}+a_2 \frac{C_{Q, s, p_s^*, \Omega}^m}{m} \left( \frac{r_{0,\mu, \lambda}^{m}-(r_{0,\mu, \lambda}-\epsilon)^{m}}{\epsilon} \right)|\Omega|^{\frac{p_s^*-m}{p_s^*}}\\&\quad \quad\quad\quad+a_3 \frac{C_{Q, s, p_s^*, \Omega}^q}{q} \left( \frac{r_{0,\mu, \lambda}^{q}-(r_{0,\mu, \lambda}-\epsilon)^{q}}{\epsilon} \right)|\Omega|^{\frac{p_s^*-q}{p_s^*}}\Bigg),
\end{align*} where $C_{Q, s, p_s^*, \Omega}$ is the best constant in the embedding $X^{s,p}(\Omega) \hookrightarrow L^{p_s^*}(\Omega).$ Therefore, 
\begin{align} \label{S44a}
 \nonumber   \limsup_{\epsilon \rightarrow 0} \Lambda_{\lambda, \mu}(\epsilon, r_{0,\mu, \lambda}) &\leq \mu C_{Q, s, p_s^*, \Omega}^{p_s^*} r_{0,\mu, \lambda}^{p_s^*-1} + \lambda \Bigg( a_1 C_{Q, s, p_s^*, \Omega} |\Omega|^{\frac{p_s^*-1}{p_s^*}}+a_2 C_{Q, s, p_s^*, \Omega}^m r_{0,\mu, \lambda}^{m-1}|\Omega|^{\frac{p_s^*-m}{p_s^*}}\nonumber\\&\quad\quad\quad\quad+a_3 C_{Q, s, p_s^*, \Omega}^q r_{0,\mu, \lambda}^{q-1}|\Omega|^{\frac{p_s^*-q}{p_s^*}}\Bigg)
    < r_{0, \mu, \lambda}^{p-1},
\end{align} by \eqref{S42a}. Combining \eqref{S44a} with Lemma \ref{Le3.3} and Lemma \ref{le3.4}, we deduce that 
\begin{equation}
     \inf_{u \in \mathfrak{E}^{-1}_{p,s,Q}([0, r_{0, \mu, \lambda}))} \frac{\sup_{v \in \mathfrak{E}^{-1}_{p,s,Q}([0, r_{0, \mu, \lambda}])} \mathfrak{F}_{\mu, \lambda}(v)- \mathfrak{F}_{\mu, \lambda}(u)}{r_{0, \mu, \lambda}^p- \mathfrak{E}_{p,s,Q}(u)^p} <\frac{1}{p}.
\end{equation}
Therefore, from  Lemma \ref{Le3.5} there exists $w_{\mu, \lambda} \in \mathfrak{E}^{-1}_{p,s,Q}([0, r_{0, \mu, \lambda}))$ such that 
    \begin{equation}
        \mathfrak{I}_{\lambda, \mu}(w_{\mu, \lambda})=\frac{1}{p} \mathfrak{E}_{p,s,Q}(w_{\mu, \lambda})^p- \mathfrak{F}_{\mu, \lambda}(w_{\mu, \lambda})<\frac{r_{0, \mu, \lambda}^p}{p}- \mathfrak{F}_{\mu, \lambda}(u)
    \end{equation}
    for every $u \in \mathfrak{E}^{-1}_{p,s,Q}([0, r_{0, \mu, \lambda}]).$ By recalling that  $r_{0, \mu, \lambda}< r_\mu$ and 
$$\mathfrak{I}_{\mu, \lambda}(u)=\mathfrak{L}_{\mu}(u)-\lambda \int_{\Omega} H(x, u(x)) d x,$$ 
we infer from Lemma \ref{weaksemiconti} that the energy functional $\mathfrak{I}_{\mu, \lambda}$ is sequentially weakly lower semicontinuous on $\overline{B_{X_0^{s,p}(\Omega)}(0, r_{0, \mu, \lambda})}.$ Thus, the restriction $\mathfrak{I}_{\mu, \lambda}|_{\overline{B_{X_0^{s,p}(\Omega)}(0, r_{0, \mu, \lambda})}}$ has a global minimum $u_{0, \mu, \lambda}$ in $\overline{B_{X_0^{s,p}(\Omega)}(0, r_{0, \mu, \lambda})}.$ Now, we claim that $u_{0, \mu, \lambda} \in B_{X_0^{s,p}(\Omega)}(0, r_{0, \mu, \lambda}).$ To prove this, suppose that $\mathfrak{E}_{p,s,Q}(u_{0, \mu, \lambda}):=\|u_{0, \mu, \lambda}\|_{X^{s,p}_0(\Omega)}=r_{0, \mu, \lambda}$. Then by \eqref{P323} we have 
$$ \mathfrak{I}_{\lambda, \mu}(u_{0, \mu, \lambda})=\frac{1}{p} r_{0, \mu, \lambda}^p- \mathfrak{F}_{\mu, \lambda}(u_{0, \mu, \lambda})> \mathfrak{I}_{\lambda, \mu}(w_{\mu, \lambda}),$$
contradicting the fact that $u_{0, \mu, \lambda}$ is a global minimum for $\mathfrak{I}_{\mu, \lambda}|_{\overline{B_{X_0^{s,p}(\Omega)}(0, r_{0, \mu, \lambda})}}.$ Therefore, we conclude that $u_{0, \mu,\lambda} \in X^{s,p}_0(\Omega)$ is a local minimum for the energy functional $\mathfrak{I}_{\mu, \lambda}$ with 
$$\mathfrak{E}_{p,s,Q}(u_{0, \mu, \lambda}):=\|u_{0, \mu, \lambda}\|_{X^{s,p}_0(\Omega)}<r_{0, \mu, \lambda}$$ and 
hence, a weak solution to the problem \eqref{pro1}. This completes the proof.
\end{proof}
\begin{theorem} 
    Let $\Omega$ be a bounded open subset of a stratified Lie group $\G$ with the
homogeneous dimension $Q.$ Let  $s \in (0,1)$ and  $\frac{Q}{s}>p\geq 2.$ Suppose that $m$ and $q$ are two real constants such that 
$$1 \leq m<p\leq q<p_s^*.$$
Then, there exists an open interval $\Sigma \subset (0, +\infty)$ such that, for every $\lambda \in \Sigma,$ the subelliptic nonlocal problem 
\begin{equation} \label{pro15.7}
       \begin{cases} 
(-\Delta_{\mathbb{G}, p})^s u=  |u|^{p_s^*-2}u+\lambda (|u|^{m-1}+|u|^{q-1}) \quad &\text{in}\quad \Omega \\
u=0\quad & \text{in}\quad \mathbb{G}\backslash \Omega
\end{cases},
    \end{equation} has at least a nonnegative nontrivial weak solution $u_\lambda \in X_{0}^{s,p}(\Omega),$ which is a  local minimum of the energy functional 
     \begin{equation}
        \mathfrak{I}_{ \lambda}(u):= \frac{1}{p} \iint_{\G \times \G} \frac{|u(x)-u(y)|^p}{|y^{-1} x|^{Q+sp}} d x d y -\frac{1}{p_s^*} \int_{\Omega} |u(x)|^{p_s^*} d x-\frac{\lambda}{m} \int_{\Omega} |u(x)|^m  d x-\frac{\lambda}{q} \int_{\Omega} |u(x)|^q  d x.
    \end{equation} 
\end{theorem}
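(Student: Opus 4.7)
The plan is to derive this result as an application of Theorem \ref{thm1.1dupl} (the three-term growth generalization proved above), combined with the sign/nontriviality machinery from Theorem \ref{subthm}. Concretely, set $\mu=1$ and define the Carath\'eodory function
$$h(x,t):=|t|^{m-1}\mathrm{sgn}(t)+|t|^{q-1}\mathrm{sgn}(t),$$
so that the antiderivative $H(x,t)=\int_0^t h(x,\tau)\,d\tau$ equals $\tfrac{|t|^m}{m}+\tfrac{|t|^q}{q}$ and the Euler-Lagrange equation associated with $\mathfrak{I}_\lambda$ is exactly \eqref{pro15.7}. The growth bound $|h(x,t)|\leq |t|^{m-1}+|t|^{q-1}$ fits the hypothesis \eqref{growthdupl} with $a_1$ chosen arbitrarily small (or zero after a trivial modification), $a_2=a_3=1$, and $1\leq m<p\leq q<p_s^*$. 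Thus Theorem \ref{thm1.1dupl} supplies $\Sigma_1>0$ and, for every $\lambda\in(0,\Sigma_1)$, a local-minimum weak solution $u_\lambda\in X_0^{s,p}(\Omega)$ of $\mathfrak{I}_\lambda$. This gives existence and the explicit localization of $\Sigma$ via \eqref{rationfun2}.

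For the sign, I would repeat the truncation device from Theorem \ref{subthm}: replace $h$ by its positive part $h_+(x,t):=h(x,t)\chi_{\{t>0\}}$ and work with the modified functional
$$\mathfrak{I}^+_\lambda(u):=\tfrac{1}{p}\iint_{\G\times\G}\tfrac{|u(x)-u(y)|^p}{|y^{-1}x|^{Q+sp}}\,dx\,dy-\tfrac{1}{p_s^*}\int_\Omega (u^+)^{p_s^*}\,dx-\lambda\int_\Omega H_+(x,u)\,dx.$$
Theorem \ref{thm1.1dupl} still applies to $\mathfrak{I}^+_\lambda$ since $h_+$ satisfies the same growth. Testing the Euler-Lagrange equation against $\varphi=u_\lambda^-\in X_0^{s,p}(\Omega)$ and using the pointwise inequality $(a-b)(a^--b^-)\leq -(a^--b^-)^2$ yields $\|u_\lambda^-\|_{X_0^{s,p}(\Omega)}^p\leq 0$, whence $u_\lambda\geq 0$ a.e. in $\Omega$; on the set where $u_\lambda\geq 0$, $h_+$ and $h$ coincide, so $u_\lambda$ solves \eqref{pro15.7}.

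For nontriviality I would verify the auxiliary condition \eqref{growth1-1}: since $H(x,t)=t^m/m+t^q/q$ for $t>0$ and $m<p$, one has $H(x,t)/t^p\geq t^{m-p}/m\to+\infty$ as $t\to 0^+$, uniformly in $x$. Picking any nonzero $w\in C_c^\infty(\Omega)$ with $w\geq 0$ and evaluating $\mathfrak{I}_\lambda(tw)$ for $t\in(0,\delta/\sup w)$ gives a bound
$$\mathfrak{I}_\lambda(tw)\leq \tfrac{t^p}{p}\|w\|_{X_0^{s,p}(\Omega)}^p-\lambda M t^p\|w\|_{L^p(\Omega)}^p-\tfrac{t^{p_s^*}}{p_s^*}\|w\|_{L^{p_s^*}(\Omega)}^{p_s^*},$$
which is strictly negative once $M$ is chosen large enough (permissible by \eqref{growth1-1}). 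Hence $\mathfrak{I}_\lambda$ takes negative values in every neighborhood of $0$, so the local minimum $u_\lambda$ cannot be zero, proving nontriviality.

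The only delicate point is to make sure that, when invoking Theorem \ref{thm1.1dupl} with $h_+$ instead of $h$, the constants entering the bound \eqref{rationfun2} still produce a valid interval $\Sigma$, which is routine since truncation does not increase the growth constants. The main expected obstacle — and the reason one cannot simply quote Theorem \ref{thm1.1} — is the mixed subcritical exponent structure $|u|^{m-1}+|u|^{q-1}$ with $m<p\leq q$; this is precisely what Theorem \ref{thm1.1dupl} was tailored to accommodate, so the rest reduces to the sign and nontriviality arguments above.
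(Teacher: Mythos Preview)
Your proposal is correct and follows essentially the same route as the paper: apply Theorem~\ref{thm1.1dupl} with $\mu=1$ and $h(x,t)=|t|^{m-1}+|t|^{q-1}$ to obtain a local-minimum weak solution, then use the truncation argument of Theorem~\ref{subthm} to force nonnegativity, and finally exploit $m<p$ to rule out the trivial solution. The only cosmetic difference is in the nontriviality step: you verify the abstract hypothesis \eqref{growth1-1} (via $H(x,t)/t^p\geq t^{m-p}/m\to+\infty$) and invoke the machinery of Theorem~\ref{subthm}, whereas the paper bypasses that hypothesis and directly computes $\mathfrak{I}_\lambda^+(\tau u_0)<0$ for small $\tau>0$, noting that the $-\frac{\lambda\tau^m}{m}\int u_0^m$ term dominates all positive terms since $m$ is the smallest exponent---but this is precisely the computation hidden inside your appeal to \eqref{growth1-1}, so the two arguments coincide.
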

\begin{proof} We first note that the statement and proof of Theorem \ref{subthm} holds if $h:\Omega \times \mathbb{R} \rightarrow \mathbb{R}$ is a Carath\'eodory function such that 
    \begin{equation} \label{growthdupl1}
       |h(x, t)| \leq a_1+a_2 |t|^{m-1}+a_3 |t|^{q-1}\quad \text{a.e.}\,\,\,x \in \Omega,\, \forall t \in \mathbb{R},
    \end{equation}
    for some $a_1, a_2, a_3>0.$   In our case $h(x, t)=|t|^{m-1}+|t|^{q-1}.$  In this case, $h(x, 0)=0$ for a.e. $x \in \Omega$ and  $u \equiv 0$ is a  trivial solution of \eqref{pro15.7}. The proof of this theorem follows from the fact that if zero is not a local minimum for the energy functional $\mathfrak{I}_{\lambda}$ then any weak solution of problem \eqref{pro15.7} obtained by Theorem \ref{thm1.1dupl} is nontrivial as a consequence of Theorem \ref{subthm}.
   In particular, we observe that zero is not a local minimum   for the energy functional $\mathfrak{I}_{\mu, \lambda}^+$ given by 
 \begin{equation}
        \mathfrak{I}_{\lambda}^+(u):= \frac{1}{p} \iint_{\G \times \G} \frac{|u(x)-u(y)|^p}{|y^{-1} x|^{Q+sp}} d x d y -\frac{1}{p_s^*} \int_{\Omega} (u(x)^+)^{p_s^*} d x-\lambda \int_{\Omega} H_+(x, u(x)) d x,
    \end{equation} for every $u \in X^{s,p}_0(\Omega),$ where $$H_+(x, u):= \begin{cases}
         \frac{u^m}{m}+\frac{u^q}{q}\quad \text{if} \quad u>0, \\
         0 \quad \text{if} \quad u \leq 0.
    \end{cases}$$
    To see that zero is not a local minimum, let us fix a nonnegative function $u_0 \in X^{s, p}_0(\Omega) \backslash \{0\}.$ Then, for sufficiently small $\tau>0,$ we get 
   \begin{align*}
         \mathfrak{I}_{\lambda}^+(\tau u_0)&= \frac{\tau^p}{p} \iint_{\G \times \G} \frac{|u_0(x)-u_0(y)|^p}{|y^{-1} x|^{Q+sp}} d x d y\\& -\frac{\tau^{p_s^*}}{p_s^*} \int_{\Omega} (u_0(x)^+)^{p_s^*} d x- \frac{\lambda \tau^m}{m} \int_{\Omega} u_0(x)^m\, d x-\frac{\lambda \tau^q}{q} \int_{\Omega} u_0(x)^q\, d x<0,
        \end{align*}
     as $1 \leq m<p\leq q<p_s^*.$  This completes the proof of the theorem. \end{proof}

\section{Subcritical nonlocal Brezis-Nirenberg problem }
In this section, we consider the problem \eqref{pro1intro} for $\mu=0,$  that is, we investigate the problem \begin{equation} \label{prob6.1}
      \begin{cases} 
(-\Delta_{\mathbb{G}, p})^s u= \lambda h(x, u) \quad &\text{in}\quad \Omega, \\
u=0\quad & \text{in}\quad \mathbb{G}\backslash \Omega,
\end{cases}
  \end{equation}
where $ \lambda>0$ is a real parameter and $h$ is a subcritical nonlinearity satisfying \begin{equation} \label{growthsec6}
        |h(x, t)| \leq a_1+a_2|t|^{q-1}\quad \text{for}\,\,\text{a.e.}\,\,x\in \Omega,\,\, \forall t \in \mathbb{R}
    \end{equation}
    for some $a_1, a_2>0$ and $q \in [1, p_s^*).$

We consider the energy function $\mathfrak{I}_{\lambda}: X_0^{s, p}(\Omega) \rightarrow \mathbb{R}$ defined by 
\begin{equation}
        \mathfrak{I}_{\lambda}(u):= \frac{1}{p} \iint_{\G \times \G} \frac{|u(x)-u(y)|^p}{|y^{-1} x|^{Q+sp}} d x d y -\lambda \int_{\Omega} \int_0^{u(x)} h(x, \tau)d \tau,
    \end{equation} for every $u \in X^{s,p}_0(\Omega).$ As a consequence of our hypothesis \eqref{growthsec6} for the growth of $h,$ the functional $\mathfrak{I}_\lambda \in C^1(X_0^{s,p}(\Omega))$ and  its derivative at $u \in X_0^{s, p}(\Omega)$ is given by 
    \begin{align}
        \langle \mathfrak{I}_\lambda'(u), \varphi \rangle& \nonumber= \int_{\G \times \G} \frac{|u(x)-u(y)|^{p-2}(u(x)-u(y)) (\varphi(x)-\varphi(y))}{|y^{-1} \circ x|^{Q+sp}} d x d y\\& \quad\quad   - \lambda \int_{G} h(x, u(x)) \varphi(x) d x
    \end{align}
for every $\varphi \in X_{0}^{s, p}(\Omega).$ Recall that, we say that a function $u \in X_{0}^{s, p}(\Omega)$ is a weak solution of \eqref{prob6.1} if 
    \begin{equation}
        \int_{\G \times \G} \frac{|u(x)-u(y)|^{p-2}(u(x)-u(y)) (\varphi(x)-\varphi(y))}{|y^{-1} \circ x|^{Q+sp}} d x d y = \lambda \int_{G} h(x, u(x)) \varphi(x) d x
    \end{equation}
    for all $\varphi \in X_{0}^{s, p}(\Omega).$ In other words, weak solutions are the critical points of the energy functional $\mathfrak{I}_\lambda,$ that is, $u \in X_{0}^{s, p}(\Omega)$ such that  $\langle \mathfrak{I}_\lambda(u), \varphi \rangle=0$ for every $\varphi \in X_{0}^{s, p}(\Omega).$ 

Similar to the previous section, 
for a fixed $\lambda>0$ we define the functional 
\begin{equation}
    \mathfrak{E}_{p, s, Q}(u):= \left(\iint_{\G \times \G} \frac{|u(x)-u(y)|^p}{|y^{-1}x|^{Q+ps}} d x d y \right)^{\frac{1}{p}}
\end{equation}
and 
\begin{equation}
    \mathfrak{F}_{ \lambda}(u):=\lambda \int_\Omega H(x, u(x)) d x,
\end{equation}
for every $u \in X_0^{s,p}(\Omega),$ where the function $H$ is given by 
\begin{equation}
    H(x, t):=\int_0^t h(x , \tau) d \tau, \quad \forall (x, t) \in \Omega \times \mathbb{R}.
\end{equation}

It is worth noting that, due to condition \eqref{growthsec6}, the operator $\mathfrak{F}_{\lambda}$ is well-defined and sequentially weakly (upper) continuous. Consequently, the operator $\mathfrak{I}_\lambda$ is sequentially weakly lower semicontinuous on $X_0^{s,p}(\Omega)$. With the aforementioned notations, we can establish the following two lemmas, which will be pivotal in the subsequent analysis.

\begin{lemma} \label{Le6.6} Let $p \in (1, \infty),$ $\lambda>0$ and $0<\zeta<\xi.$ We set
$$\Psi_{\lambda}(\xi, \zeta ):= \sup_{v \in \mathfrak{E}_{p, s, Q}^{-1}([0, \xi] )} \mathfrak{F}_{\lambda}(v)- \sup_{v \in \mathfrak{E}_{p, s, Q}^{-1}([0, \xi-\zeta]) } \mathfrak{F}_{\lambda}(v).$$ Suppose that \begin{equation} \label{L6.1}
    \limsup_{\epsilon \rightarrow 0^+}  \frac{\Psi_{\lambda}(r_0, \epsilon)}{\epsilon}< r_0^{p-1}
\end{equation}
for some $r_0>0.$
    Then 
    \begin{equation}\label{L6.4}
        \inf_{0<\eta <r_0} \frac{\Psi_{\lambda}(r_0, r_0-\eta)}{r_0^p-\eta^p}< \frac{1}{p}.
    \end{equation}
\end{lemma}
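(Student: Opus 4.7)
The plan is to mirror the argument used in Lemma \ref{Le3.3}, since the statement of Lemma \ref{Le6.6} is structurally identical once $\mu = 0$ is absorbed into the definition of $\mathfrak{F}_\lambda$. The key observation is that the substitution $\eta = r_0 - \epsilon$ turns the quantity appearing in the desired inequality \eqref{L6.4} into $\Psi_\lambda(r_0,\epsilon)/(r_0^p - (r_0-\epsilon)^p)$, so it will suffice to exhibit one value $\bar\epsilon \in (0, r_0)$ at which this ratio is strictly less than $1/p$.

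First I would split the ratio algebraically as
\begin{equation*}
\frac{\Psi_\lambda(r_0,\epsilon)}{r_0^p - (r_0-\epsilon)^p} \;=\; \frac{\Psi_\lambda(r_0,\epsilon)}{\epsilon} \cdot \frac{\epsilon}{r_0^p - (r_0-\epsilon)^p}.
\end{equation*}
The first factor has $\limsup$ strictly less than $r_0^{p-1}$ as $\epsilon \to 0^+$ by the hypothesis \eqref{L6.1}. For the second factor, rewriting it as $-\epsilon/(r_0^p[(1-\epsilon/r_0)^p-1])$ and applying L'H\^opital's rule (equivalently, a first-order Taylor expansion $(1-t)^p = 1 - pt + o(t)$) gives
\begin{equation*}
\lim_{\epsilon \to 0^+} \frac{\epsilon}{r_0^p - (r_0-\epsilon)^p} \;=\; \frac{1}{p\, r_0^{p-1}}.
\end{equation*}
Multiplying, I obtain $\limsup_{\epsilon \to 0^+} \Psi_\lambda(r_0,\epsilon)/(r_0^p - (r_0-\epsilon)^p) < r_0^{p-1}\cdot 1/(p\, r_0^{p-1}) = 1/p$.

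From this strict limsup inequality I can select some $\bar\epsilon \in (0, r_0)$ at which $\Psi_\lambda(r_0,\bar\epsilon)/(r_0^p - (r_0-\bar\epsilon)^p) < 1/p$. Setting $\bar\eta := r_0 - \bar\epsilon \in (0, r_0)$, the infimum in \eqref{L6.4} is bounded above by this particular ratio, yielding the claim. No real obstacle arises; the only point requiring minor care is the positivity of $r_0^p - (r_0-\epsilon)^p$ for $\epsilon \in (0, r_0)$, which is automatic from the strict monotonicity of $x \mapsto x^p$ on $[0,\infty)$ for $p > 1$, ensuring that the algebraic manipulations above are well-defined throughout.
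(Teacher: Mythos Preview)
Your proposal is correct and follows precisely the approach of the paper, which in fact omits the proof of Lemma~\ref{Le6.6} by noting it is identical to that of Lemma~\ref{Le3.3}. The algebraic splitting, the L'H\^opital computation of the second factor, and the choice of $\bar\eta = r_0 - \bar\epsilon$ all match the paper's argument exactly.
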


\begin{lemma} \label{le6.7}
    Let $p \in (1, \infty)$ and $\lambda >0.$ Suppose that \eqref{L6.4} holds for some $r_0>0.$ Then we have 
    \begin{equation}
        \inf_{u \in \mathfrak{E}^{-1}_{p,s,Q}([0, r_0))} \frac{\sup_{v \in \mathfrak{E}^{-1}_{p,s,Q}([0, r_0])} \mathfrak{F}_{ \lambda}(v)- \mathfrak{F}_{\lambda}(u)}{r_0^p- \mathfrak{E}_{p,s,Q}(u)^p} <\frac{1}{p}.
    \end{equation}
\end{lemma}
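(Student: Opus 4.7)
The plan is to follow the proof of Lemma \ref{le3.4} almost verbatim, since Lemma \ref{le6.7} is its analogue with the critical term suppressed ($\mu=0$), and the structural argument there does not rely on the particular form of the critical piece. First, I would unpack the hypothesis \eqref{L6.4}: by definition of the infimum, there exists $\eta_0 \in (0, r_0)$ with $\Psi_\lambda(r_0, r_0-\eta_0) < (r_0^p - \eta_0^p)/p$, which, after expanding $\Psi_\lambda$ according to its definition, is the statement
$$
\sup_{v \in \mathfrak{E}^{-1}_{p,s,Q}([0, \eta_0])} \mathfrak{F}_{\lambda}(v) \;>\; \sup_{v \in \mathfrak{E}^{-1}_{p,s,Q}([0, r_0])} \mathfrak{F}_{\lambda}(v) - \frac{r_0^p - \eta_0^p}{p}.
$$

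Next, I would exploit two structural facts already recorded in the preamble of this section: the Banach space $X_0^{s,p}(\Omega)$ is reflexive, so the closed ball $\mathfrak{E}^{-1}_{p,s,Q}([0, \eta_0])$ is sequentially weakly compact; and $\mathfrak{F}_{\lambda}$ is sequentially weakly (upper) continuous on $X_0^{s,p}(\Omega)$ thanks to the subcritical growth bound \eqref{growthsec6} combined with the compact subelliptic embedding $X_0^{s,p}(\Omega) \hookrightarrow L^r(\Omega)$ for $r \in [1, p_s^*)$ from Theorem \ref{l-3}. Consequently, there exists $\tilde{v} \in X_0^{s,p}(\Omega)$ with $\mathfrak{E}_{p,s,Q}(\tilde{v}) \leq \eta_0$ realising the supremum on the left-hand side above.

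To conclude, since $\mathfrak{E}_{p,s,Q}(\tilde{v}) \leq \eta_0 < r_0$, the point $\tilde{v}$ belongs to $\mathfrak{E}^{-1}_{p,s,Q}([0, r_0))$ and one has
$$
\frac{\sup_{v \in \mathfrak{E}^{-1}_{p,s,Q}([0, r_0])} \mathfrak{F}_{\lambda}(v) - \mathfrak{F}_{\lambda}(\tilde{v})}{r_0^p - \mathfrak{E}_{p,s,Q}(\tilde{v})^p} \;<\; \frac{(r_0^p - \eta_0^p)/p}{r_0^p - \eta_0^p} \;=\; \frac{1}{p},
$$
because the numerator is strictly less than $(r_0^p - \eta_0^p)/p$ by the choice of $\eta_0$, while the denominator is at least $r_0^p - \eta_0^p > 0$. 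Taking the infimum over $u \in \mathfrak{E}^{-1}_{p,s,Q}([0, r_0))$ then produces the claimed strict bound. The only mildly delicate step I foresee is justifying the attainment of the supremum by $\tilde{v}$, which rests on reflexivity plus the weak upper continuity of $\mathfrak{F}_{\lambda}$; in contrast with Lemma \ref{le3.4}, no closure trick forcing $\mathfrak{E}_{p,s,Q}(\tilde{v}) = \eta_0$ is required here, since the loose bound $\mathfrak{E}_{p,s,Q}(\tilde{v}) \leq \eta_0 < r_0$ already places $\tilde{v}$ strictly inside the open ball, which is all the conclusion needs.
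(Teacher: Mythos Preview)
Your proposal is correct and follows the same structural argument as the paper, which explicitly states that the proof of Lemma~\ref{le6.7} is identical to that of Lemma~\ref{le3.4}. The one place you deviate---obtaining the maximiser $\tilde v$ directly from reflexivity and the sequential weak upper continuity of $\mathfrak{F}_\lambda$, rather than via the sphere/weak-closure device used in Lemma~\ref{le3.4}---is a legitimate simplification available precisely because $\mu=0$ removes the critical term, and your observation that $\mathfrak{E}_{p,s,Q}(\tilde v)\le \eta_0<r_0$ suffices (no need for equality) is exactly right.
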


The proofs of these two lemmas are identical with Lemma \ref{Le3.3} and Lemma \ref{le3.4} and therefore, we skip the proof here. Our main result of this section is stated below. 
    \begin{theorem} \label{thm6.5}
    Let $\Omega$ be a bounded open subset of a stratified Lie group $G$ with the homogeneous dimension $Q.$ Let $s \in (0, 1),$ $1<p<q<p_s^*$ and $ps<Q,$ where $p_s^*:=\frac{Qp}{Q-ps}.$ Let $h:\Omega \times \mathbb{R} \rightarrow \mathbb{R}$ be a Carath\'eodory function such that 
    \begin{equation} 
        |h(x, t)| \leq a_1+a_2|t|^{q-1}\quad \text{for}\,\,\text{a.e.}\,\,x\in \Omega,\,\, \forall t \in \mathbb{R}
    \end{equation}
    for some $a_1, a_2>0.$ Furthermore, let 
    $$0<\lambda < \frac{(q-p)^{\frac{q-p}{q-1}} (p-1)^{\frac{p-1}{q-1}}}{(a_1 C1)^{\frac{q-p}{q-1}} (a_2C_2)^{\frac{p-1}{q-1}} (q-1) C_{Q, s, p_s^*, \Omega}^p |\Omega|^{\frac{p_s^*-q}{p_s^*}\left( \frac{p-1}{q-1} \right)} |\Omega|^{\frac{p_s^*-1}{p_s^*} \left( \frac{q-p}{q-1} \right)}  },$$ where $C_1,$ $C_2$ and  $C_{Q, s, p_s^* , \Omega}$ is the embedding constant of $L^{p_s^*}(\Omega) \hookrightarrow L^1(\Omega),$ $L^{p_s^*}(\Omega) \hookrightarrow L^q(\Omega),$ and  $X_{0}^{s, p}(\Omega) \hookrightarrow L^{p_s^*}(\Omega),$ respectively and $|\Omega|$ is the Haar measure of the set $\Omega.$ Then, the following  nonlocal subelliptic parameter  problem
    \begin{equation} 
       \begin{cases} 
(-\Delta_{\mathbb{G}, p})^s u= \lambda h(x, u) \quad &\text{in}\quad \Omega \\
u=0\quad & \text{in}\quad \mathbb{G}\backslash \Omega
\end{cases},
    \end{equation} admits a weak solution  $u_{0, \lambda} \in X^{s,p}_0(\Omega)$ and 
    $$\|u_{0,  \lambda}\|_{X^{s,p}_0(\Omega)}< \left( \frac{\lambda (q-1) a_2 C_{Q,s, r, \Omega}^q |\Omega|^{\frac{r-q}{r}}}{p-1} \right)^{\frac{1}{p-q}}.$$ 
\end{theorem}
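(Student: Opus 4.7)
\medskip

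\noindent\textbf{Proof proposal for Theorem \ref{thm6.5}.} The plan is to reuse the variational machinery of Lemmas \ref{Le6.6} and \ref{le6.7}, exploiting the fact that the subcritical structure of the problem makes the full energy functional $\mathfrak{I}_\lambda$ sequentially weakly lower semicontinuous on all of $X_0^{s,p}(\Omega)$, so that no restriction to a small ball (as in Lemma \ref{weaksemiconti}) is required. First I would repeat the rescaling trick from Section \ref{sec4}: for $0<\epsilon<r_0$, write
\begin{equation*}
\Psi_{\lambda}(r_0,\epsilon)\leq \lambda \sup_{v\in\mathfrak{E}_{p,s,Q}^{-1}([0,1])}\int_{\Omega}\left|\int_{(r_0-\epsilon)v(x)}^{r_0 v(x)} h(x,t)\,dt\right|dx
\end{equation*}
and estimate the inner integral by $|h(x,t)|\leq a_1+a_2|t|^{q-1}$. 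Using the continuous embeddings $X_0^{s,p}(\Omega)\hookrightarrow L^{p_s^*}(\Omega)\hookrightarrow L^1(\Omega),L^q(\Omega)$ with the stated constants $C_1,C_2,C_{Q,s,p_s^*,\Omega}$, one obtains the clean upper bound
\begin{equation*}
\limsup_{\epsilon\to 0^+}\frac{\Psi_{\lambda}(r_0,\epsilon)}{\epsilon}\leq \lambda\bigl(A+B\,r_0^{q-1}\bigr),
\end{equation*}
where $A=a_1 C_1 C_{Q,s,p_s^*,\Omega}|\Omega|^{(p_s^*-1)/p_s^*}$ and $B=a_2 C_2 C_{Q,s,p_s^*,\Omega}^q |\Omega|^{(p_s^*-q)/p_s^*}$.

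\medskip

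Next I would verify the hypothesis \eqref{L6.1} of Lemma \ref{Le6.6} by \emph{maximising} the function $r\mapsto r^{p-1}/(A+Br^{q-1})$ over $r>0$. Since $1<p<q$, calculus yields a unique critical point
\begin{equation*}
r_{\lambda}^{*}:=\left(\frac{(p-1)A}{(q-p)B}\right)^{\!\tfrac{1}{q-1}},
\end{equation*}
at which the maximum equals $\frac{(q-p)^{(q-p)/(q-1)}(p-1)^{(p-1)/(q-1)}}{(q-1)A^{(q-p)/(q-1)}B^{(p-1)/(q-1)}}$. A short computation shows that this maximum coincides exactly with the explicit upper threshold for $\lambda$ stated in the theorem (the power $p$ on $C_{Q,s,p_s^*,\Omega}$ arises from $\frac{q-p}{q-1}+q\cdot\frac{p-1}{q-1}=p$). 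Hence the hypothesis $\lambda<\text{(threshold)}$ is precisely $\lambda(A+Br_{\lambda}^{*,q-1})<r_{\lambda}^{*,p-1}$, so \eqref{L6.1} holds with $r_0=r_{\lambda}^{*}$.

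\medskip

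Lemma \ref{Le6.6} then delivers \eqref{L6.4}, and Lemma \ref{le6.7} the strict infimum bound $<1/p$. Following the conclusion of Section \ref{sec4} verbatim, the bound produces a point $w_{\lambda}\in\mathfrak{E}_{p,s,Q}^{-1}([0,r_{\lambda}^{*}))$ with
\begin{equation*}
\mathfrak{I}_{\lambda}(w_{\lambda})<\frac{(r_{\lambda}^{*})^{p}}{p}-\mathfrak{F}_{\lambda}(u)\qquad\forall\,u\in\mathfrak{E}_{p,s,Q}^{-1}([0,r_{\lambda}^{*}]).
\end{equation*}
Here the key feature of the subcritical setting kicks in: because $q<p_s^{*}$, Theorem \ref{l-3} gives a \emph{compact} embedding $X_0^{s,p}(\Omega)\hookrightarrow L^{q}(\Omega)$, so $\mathfrak{F}_{\lambda}$ is sequentially weakly continuous and $\mathfrak{I}_{\lambda}$ is sequentially weakly lower semicontinuous on the entire space. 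Thus $\mathfrak{I}_{\lambda}$ restricted to the weakly closed ball $\overline{B_{X_0^{s,p}(\Omega)}(0,r_{\lambda}^{*})}$ attains its global minimum at some $u_{0,\lambda}$. Using the strict inequality above exactly as in the last paragraph of Section \ref{sec4}, the norm $\|u_{0,\lambda}\|_{X_0^{s,p}(\Omega)}=r_{\lambda}^{*}$ is ruled out, so the minimiser lies in the open ball and is a local minimum of $\mathfrak{I}_{\lambda}$ on $X_0^{s,p}(\Omega)$, hence a critical point and a weak solution of \eqref{prob6.1}.

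\medskip

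Finally, the quantitative norm bound is obtained by rewriting $\|u_{0,\lambda}\|_{X_0^{s,p}(\Omega)}<r_{\lambda}^{*}$ via the explicit formula for $r_{\lambda}^{*}$: equivalently $r_{\lambda}^{*,q-1}=\frac{(p-1)A}{(q-p)B}$ and a rearrangement in terms of $\lambda$ and $B$ yields the target expression $\bigl(\lambda(q-1)a_{2}C_{2}C_{Q,s,p_s^*,\Omega}^{q}|\Omega|^{(p_s^{*}-q)/p_s^{*}}/(p-1)\bigr)^{1/(p-q)}$. I expect the principal obstacle to be purely bookkeeping — tracking powers of $C_{Q,s,p_s^*,\Omega}$ and of $|\Omega|$ through the optimisation so that the threshold for $\lambda$ matches the stated one exactly; no essentially new analytic ideas are needed beyond those already in Sections \ref{sec3}--\ref{sec4}, since the absence of the critical term $|u|^{p_s^{*}-2}u$ removes the only serious non-compactness issue.
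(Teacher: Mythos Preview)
Your overall strategy coincides with the paper's: estimate $\Psi_\lambda(r_0,\epsilon)/\epsilon$ by the rescaling trick, obtain the bound $\lambda(A+Br_0^{q-1})$ with the same constants $A,B$, invoke Lemmas \ref{Le6.6}--\ref{le6.7}, and then use sequential weak lower semicontinuity of $\mathfrak{I}_\lambda$ (granted for free by the compact embedding since $q<p_s^*$) to locate a minimiser in the open ball. All of that is exactly what the paper does.

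There is, however, one genuine discrepancy in the choice of the radius $r_0$, and it affects the final norm bound. You \emph{maximise} the ratio $r\mapsto r^{p-1}/(A+Br^{q-1})$, obtaining the $\lambda$-independent point
\[
r_\lambda^{*}=\left(\frac{(p-1)A}{(q-p)B}\right)^{1/(q-1)}.
\]
The paper instead \emph{minimises} $\varphi_\lambda(r):=\lambda(A+Br^{q-1})-r^{p-1}$, obtaining the $\lambda$-dependent point
\[
r_{0,\lambda}=\left(\frac{\lambda(q-1)B}{p-1}\right)^{1/(p-q)},
\]
and it is precisely this $r_{0,\lambda}$ that equals the norm bound stated in the theorem. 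Your claim that ``a rearrangement in terms of $\lambda$ and $B$'' converts $r_\lambda^{*}$ into the target is therefore incorrect: $r_\lambda^{*}$ contains no $\lambda$ at all, so no algebraic bookkeeping can produce the $\lambda$-dependent expression. Both choices of $r_0$ satisfy $\varphi_\lambda(r_0)<0$ under the threshold hypothesis, so both yield existence; but to recover the stated estimate $\|u_{0,\lambda}\|_{X_0^{s,p}(\Omega)}<r_{0,\lambda}$ you must either switch to the paper's minimisation of $\varphi_\lambda$ from the outset, or add the (true but not entirely trivial) observation that $r_\lambda^{*}\le r_{0,\lambda}$ for every $\lambda$ below the threshold, with equality only at the threshold value.
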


\begin{proof} We fix $\lambda$ such that 
\begin{equation} \label{S42sec6}
    0<\lambda< \frac{(q-p)^{\frac{q-p}{q-1}} (p-1)^{\frac{p-1}{q-1}}}{(a_1 C_1)^{\frac{q-p}{q-1}} (a_2C_2)^{\frac{p-1}{q-1}} (q-1) C_{Q, s, p_s^*, \Omega}^p |\Omega|^{\frac{p_s^*-q}{p_s^*}\left( \frac{p-1}{q-1} \right)} |\Omega|^{\frac{p_s^*-1}{p_s^*} \left( \frac{q-p}{q-1} \right)}  }.
\end{equation}
Now, let us consider $0<\epsilon< r$ and define
\begin{equation}
    \Lambda_{\lambda, \mu}(\epsilon, r):=\frac{\sup_{v \in \mathfrak{E}_{p, s, Q}^{-1}([0, r] )} \mathfrak{F}_{ \lambda}(v)- \sup_{v \in \mathfrak{E}_{p, s, Q}^{-1}([0, r-\epsilon]) } \mathfrak{F}_{ \lambda}(v)}{\epsilon}.
\end{equation}
Therefore, by rescaling $v$ we obtain that 
\begin{align*}
    \Lambda_{\lambda}(\epsilon, r)&\leq\frac{1}{\epsilon} \Bigg| \sup_{v \in \mathfrak{E}_{p, s, Q}^{-1}([0, r] )} \mathfrak{F}_{ \lambda}(v)- \sup_{v \in \mathfrak{E}_{p, s, Q}^{-1}([0, r-\epsilon]) } \mathfrak{F}_{ \lambda}(v) \Bigg| \\&
    \leq \sup_{v \in \mathfrak{E}_{p, s, Q}^{-1}([0, 1] )} \int_\Omega \Bigg| \int^{r v(x)}_{(r-\epsilon)v(x)} \frac{\lambda h(x, t)}{\epsilon} d t \Bigg| d x.
\end{align*}

Next, using the subcritical growth condition \eqref{growthsec6} of $h$ along with the subelliptic  embedding $X_0^{s,p}(\Omega) \hookrightarrow L^{p_s^*}(\Omega),$  and H\"older inequality for exponents $\frac{p_s^*}{p_s^*-q}$ and $\frac{p_s^*}{q}$,  we get
\begin{align*}
    \Lambda_{\lambda}(\epsilon, r)&\leq \sup_{v \in \mathfrak{E}_{p, s, Q}^{-1}([0, 1] )} \int_\Omega \Big| \int^{rv(x)}_{(r-\epsilon)v(x)} \frac{\lambda h(x, t)}{\epsilon} d t \Big| d x\\& \leq \frac{1}{\epsilon} \sup_{v \in \mathfrak{E}_{p, s, Q}^{-1}([0, 1] )} \int_\Omega \Big| \int^{rv(x)}_{(r-\epsilon)v(x)}  \lambda (a_1+a_2|t|^{q-1}))  d t \Big| d x \\&\leq  \lambda \sup_{v \in \mathfrak{E}_{p, s, Q}^{-1}([0, 1] )} \Bigg( a_1 \|v\|_{L^1(\Omega)} + \frac{a_2}{q} \left( \frac{r^{q}-(r-\epsilon)^{q}}{\epsilon} \right) \|v\|_{L^q(\Omega)}^q \Bigg) \\&\leq  \lambda \Bigg( a_1 C_1 C_{Q, s, p_s^*, \Omega} |\Omega|^{\frac{p_s^*-1}{p_s^*}}+a_2 C_2 \frac{C_{Q, s, p_s^*, \Omega}^q}{q} \left( \frac{r^{q}-(r-\epsilon)^{q}}{\epsilon} \right)|\Omega|^{\frac{r-q}{r}}\Bigg),
\end{align*} where $C_1, C_2$ and $C_{Q, s, p_s^*, \Omega}$ are the embedding constants.  Therefore, 
\begin{align} \label{S44sec6}
   \limsup_{\epsilon \rightarrow 0} \Lambda_{\lambda}(\epsilon, r) &\leq \lambda \Bigg( a_1 C_1 C_{Q, s, p_s^*, \Omega} |\Omega|^{\frac{p_s^*-1}{p_s^*}}+a_2 C_2C_{Q, s, p_s^*, \Omega}^q r^{q-1}|\Omega|^{\frac{p_s^*-q}{p_s^*}}\Bigg).
\end{align} 
Now consider the real-valued function
$$\varphi_{\lambda}(r)=\lambda \Bigg( a_1 C_1 C_{Q, s, p_s^*, \Omega} |\Omega|^{\frac{p_s^*-1}{p_s^*}}+a_2 C_2 C_{Q, s, p_s^*, \Omega}^q r^{q-1}|\Omega|^{\frac{p_s^*-q}{p_s^*}}\Bigg)-r^{p-1},$$
for every $r>0.$ By using the elementary calculus, one can see that $\inf_{r>0} \varphi_\lambda(r)$ is attained at 
$$r_{0, \lambda}:=\left( \frac{\lambda (q-1) a_2 C_2 C_{Q,s,p_s^*, \Omega}^q |\Omega|^{\frac{p_s^*-q}{p_s^*}}}{p-1} \right)^{\frac{1}{p-q}}.$$
On the other hand,  by \eqref{S42sec6}, we see that 
$$\inf_{r>0} \varphi_\lambda(r)= \varphi_\lambda(r_{0, \lambda})<0.$$
Thus, \eqref{S44sec6} yields that 
\begin{equation} \label{humha}
    \limsup_{\epsilon \rightarrow 0} \Lambda_{\lambda}(\epsilon, r) \leq r_{0, \lambda}^{p-1}.
\end{equation}
Combining \eqref{humha} with Lemma \ref{Le3.3} and Lemma \ref{le3.4}, we deduce that 
\begin{equation} 
     \inf_{u \in \mathfrak{E}^{-1}_{p,s,Q}([0, r_{0, \lambda}))} \frac{\sup_{v \in \mathfrak{E}^{-1}_{p,s,Q}([0, r_{0, \lambda}])} \mathfrak{F}_{ \lambda}(v)- \mathfrak{F}_{\lambda}(u)}{r_{0, \lambda}^p- \mathfrak{E}_{p,s,Q}(u)^p} <\frac{1}{p}.
\end{equation}
Therefore, the above relation shows that there exists $w_\lambda \in \mathfrak{E}^{-1}_{p,s,Q}([0, r_{0, \lambda}))$ such that 
    \begin{equation}
        \mathfrak{F}_{ \lambda}(u) \leq \sup_{v \in \mathfrak{E}^{-1}_{p,s,Q}([0, r_{0, \lambda}])} \mathfrak{F}_{ \lambda}(v) < \mathfrak{F}_{ \lambda}(w_\lambda)+\frac{r_{0, \lambda}^p-\mathfrak{E}_{p,s,Q}(w_\lambda)^p}{p} 
    \end{equation}
    for every $u \in \mathfrak{E}^{-1}_{p,s,Q}([0, r_{0, \lambda}]).$ Thus,  we deduce that
    \begin{equation} \label{eqsec6.17}
        \mathfrak{I}_{\lambda}(w_{\lambda})=\frac{1}{p} \mathfrak{E}_{p,s,Q}(w_{ \lambda})^p- \mathfrak{F}_{ \lambda}(w_{\lambda})<\frac{r_{0,  \lambda}^p}{p}- \mathfrak{F}_{ \lambda}(u)
    \end{equation}
    for every $u \in \mathfrak{E}^{-1}_{p,s,Q}([0, r_{0,  \lambda}]).$ Since the energy functional $\mathfrak{I}_{ \lambda}$ is sequentially weakly lower semicontinuous, its restriction on $\mathfrak{E}^{-1}_{p,s,Q}([0, r_{0,  \lambda}])$  has a global minimum $u_{0, \lambda}$ in $\mathfrak{E}^{-1}_{p,s,Q}([0, r_{0,  \lambda}]).$ Note that $u_{0, \mu, \lambda} \in \mathfrak{E}^{-1}_{p,s,Q}([0, r_{0,  \lambda}]).$ Indeed, suppose that $$\mathfrak{E}_{p,s,Q}(u_{0, \lambda}):=\|u_{0, \lambda}\|_{X^{s,p}_0(\Omega)}=r_{0,  \lambda},$$ 
    then by \eqref{eqsec6.17} we have 
$$ \mathfrak{I}_{\lambda}(u_{0,  \lambda})=\frac{1}{p} r_{0,  \lambda}^p- \mathfrak{F}_{\lambda}(u_{0,  \lambda})> \mathfrak{I}_{\lambda}(w_{ \lambda}),$$
contradicting the fact that $u_{0, \lambda}$ is a global minimum for $\mathfrak{I}_{\lambda}.$ Therefore, we conclude that $u_{0,\lambda} \in X^{s,p}_0(\Omega)$ is a local minimum for the energy functional $\mathfrak{I}_{ \lambda}$ with 
$$\mathfrak{E}_{p,s,Q}(u_{0, \lambda}):=\|u_{0,  \lambda}\|_{X^{s,p}_0(\Omega)}<r_{0, \lambda}$$ and 
hence, a weak solution to the problem \eqref{pro1}. This completes the proof. \end{proof}

The following interesting result is a special case $(\lambda=1)$ of Theorem \ref{thm6.5}. Specifically, this is a nonlocal version of the local problem studied in \cite{AC04} within the Euclidean setting, using the variational principle obtained by Ricceri \cite{Ric00}.

\begin{cor} 
    Let $\Omega$ be a bounded open subset of a stratified Lie group $G$ with the homogeneous dimension $Q.$ Let $s \in (0, 1),$ $1<p<q<p_s^*=\frac{pQ}{Q-ps}$ and $\frac{Q}{s}>p\geq 2.$ Let $h:\Omega \times \mathbb{R} \rightarrow \mathbb{R}$ be a Carath\'eodory function such that 
    \begin{equation} 
        |h(x, t)| \leq a_1+a_2|t|^{q-1}\quad \text{for}\,\,\text{a.e.}\,\,x\in \Omega,\,\, \forall t \in \mathbb{R}
    \end{equation}
    for some $a_1, a_2>0.$  Assume that $$ a_1^{\frac{q-p}{p-1}} a_2 < \frac{(q-p)^{\frac{q-p}{p-1}}(p-1)}{  C_1^{\frac{q-p}{p-1}} C_2 (q-1)^{\frac{q-1}{p-1}} C_{Q, s, r, \Omega}^{p \left(\frac{q-1}{p-1} \right)} |\Omega|^{\frac{p_s^*-q}{p_s^*}} |\Omega|^{\frac{p_s^*-1}{p_s^*} \left( \frac{q-p}{p-1} \right)} },$$ where $C_1,$ $C_2$ and  $C_{Q, s, p_s^* , \Omega}$ is the embedding constant of $L^{p_s^*}(\Omega) \hookrightarrow L^1(\Omega),$ $L^{p_s^*}(\Omega) \hookrightarrow L^q(\Omega),$ and  $X_{0}^{s, p}(\Omega) \hookrightarrow L^{p_s^*}(\Omega),$ respectively  and $|\Omega|$ is the Haar measure of set $\Omega.$ 
     Then, the following  nonlocal subelliptic problem
    \begin{equation} 
       \begin{cases} 
(-\Delta_{\mathbb{G}, p})^s u=  h(x, u) \quad &\text{in}\quad \Omega \\
u=0\quad & \text{in}\quad \mathbb{G}\backslash \Omega
\end{cases},
    \end{equation} admits a weak solution on $u_{0} \in X^{s,p}_0(\Omega)$ and 
    $$\|u_{0}\|_{X^{s,p}_0(\Omega)}< \left( \frac{(q-1) a_2 C_2 C_{Q,s,p_s^*, \Omega}^q |\Omega|^{\frac{p_s^*-q}{p_s^*}}}{p-1} \right)^{\frac{1}{p-q}}.$$

\end{cor}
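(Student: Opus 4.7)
The plan is to obtain this corollary as a direct specialization of Theorem \ref{thm6.5} to the parameter value $\lambda = 1$. Since the nonlinearity $h$ already satisfies the growth condition \eqref{growthsec6}, the only thing to verify is that the displayed hypothesis on $a_1^{(q-p)/(p-1)} a_2$ in the corollary is equivalent to the smallness condition on $\lambda$ in Theorem \ref{thm6.5} when $\lambda = 1$.

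First I would rewrite the inequality
\[
1<\frac{(q-p)^{\frac{q-p}{q-1}}(p-1)^{\frac{p-1}{q-1}}}{(a_1 C_1)^{\frac{q-p}{q-1}}(a_2 C_2)^{\frac{p-1}{q-1}}(q-1)\,C_{Q,s,p_s^*,\Omega}^{\,p}\,|\Omega|^{\frac{p_s^*-q}{p_s^*}\frac{p-1}{q-1}}\,|\Omega|^{\frac{p_s^*-1}{p_s^*}\frac{q-p}{q-1}}}
\]
as an upper bound on the product $(a_1 C_1)^{\frac{q-p}{q-1}}(a_2 C_2)^{\frac{p-1}{q-1}}$, and then raise both sides to the power $\frac{q-1}{p-1}$. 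Elementary bookkeeping of the exponents of $|\Omega|$, $C_{Q,s,p_s^*,\Omega}$, $q-1$, $q-p$, $p-1$ yields exactly the assumption
\[
a_1^{\frac{q-p}{p-1}} a_2<\frac{(q-p)^{\frac{q-p}{p-1}}(p-1)}{C_1^{\frac{q-p}{p-1}} C_2\,(q-1)^{\frac{q-1}{p-1}}\,C_{Q,s,p_s^*,\Omega}^{\,p\frac{q-1}{p-1}}\,|\Omega|^{\frac{p_s^*-q}{p_s^*}}\,|\Omega|^{\frac{p_s^*-1}{p_s^*}\frac{q-p}{p-1}}}
\]
stated in the corollary (the symbol $C_{Q,s,r,\Omega}$ being, as elsewhere, shorthand for $C_{Q,s,p_s^*,\Omega}$).

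Once this equivalence is verified, Theorem \ref{thm6.5} applied with $\lambda = 1$ immediately produces a weak solution $u_0 \in X_0^{s,p}(\Omega)$ of
\[
\begin{cases}
(-\Delta_{\mathbb{G},p})^s u = h(x,u) & \text{in } \Omega,\\
u = 0 & \text{in } \mathbb{G}\setminus\Omega,
\end{cases}
\]
which is a local minimizer of the corresponding energy functional, together with the norm bound obtained by setting $\lambda = 1$ in the expression for $r_{0,\lambda}$, namely
\[
\|u_0\|_{X_0^{s,p}(\Omega)}<\left(\frac{(q-1)\,a_2\,C_2\,C_{Q,s,p_s^*,\Omega}^{q}\,|\Omega|^{\frac{p_s^*-q}{p_s^*}}}{p-1}\right)^{\frac{1}{p-q}}.
\]

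There is essentially no obstacle here beyond careful algebra: the entire content is a reformulation of a sufficient smallness condition in terms of the data $a_1, a_2$ rather than in terms of the running parameter $\lambda$. The only point that requires some care is tracking the exponents of $|\Omega|$ and of the Sobolev embedding constants correctly when passing from the condition on $\lambda$ in Theorem \ref{thm6.5} to the condition on the pair $(a_1,a_2)$ appearing in the corollary; doing this bookkeeping with $\lambda = 1$ substituted in is the entirety of the proof.
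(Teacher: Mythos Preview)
Your proposal is correct and matches the paper's approach exactly: the paper presents this corollary without proof, simply noting that it is ``a special case ($\lambda=1$) of Theorem~\ref{thm6.5}.'' Your verification that the smallness condition on $a_1^{(q-p)/(p-1)}a_2$ is nothing but the $\lambda$-condition of Theorem~\ref{thm6.5} rewritten at $\lambda=1$ (via raising both sides to the power $\frac{q-1}{p-1}$) is precisely the algebra one needs, and the norm bound then follows by substituting $\lambda=1$ into the expression for $r_{0,\lambda}$.
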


\section*{Conflict of interest statement}
On behalf of all authors, the corresponding author states that there is no conflict of interest.

\section*{Data availability statement}
Data sharing is not applicable to this article as no datasets were generated or analysed during the current study.

\section*{Acknowledgement}
 SG acknowledges the research facilities available at the Department of Mathematics, NIT Calicut under the DST-FIST support, Govt. of India [Project no. SR/FST/MS-1/2019/40 Dated. 07.01 2020.].
  VK and MR are supported by the FWO Odysseus 1 grant G.0H94.18N: Analysis and Partial Differential Equations, the Methusalem programme of the Ghent University Special Research Fund (BOF) (Grant number 01M01021) and by FWO Senior Research Grant G011522N. MR is also supported by EPSRC grant
 EP/V005529/1. 


\begin{thebibliography}{10}








 








\bibitem{AC04} G. Anello and G. Cordaro. An existence and localization theorem for the solutions of a Dirichlet problem.
{\it Ann. Pol. Math.} 83:107–112, 2004.

\bibitem{BPV22}
S. Biagi, A. Pinamonti and E. Vecchi. \newblock Sublinear Equations Driven by H\"ormander Operators. \newblock {\em The Journal of Geometric Analysis}, 32(4):1--27, 2022.




 
  




\bibitem{BSS18} J. F. Bonder, N. Saintier and A. Silva. \newblock The concentration-compactness principle for fractional order {S}obolev spaces in unbounded domains and applications to the generalized fractional {B}rezis-{N}irenberg problem. {\em Nonlinear Differ. Equ. Appl.}, 25, Art. 52, 2018.

\bibitem{BLU07}
A.~Bonfiglioli, E.~Lanconelli and F.~Uguzzoni.
\newblock {\em Stratified {L}ie {G}roups and {P}otential {T}heory for their
  {S}ub-{L}aplacians}.
\newblock Springer Berlin Heidelberg, 2007.


\bibitem{BU04} A. Bonfiglioli and F. Uguzzoni. Nonlinear Liouville theorems for some critical problems on H-type groups. {\em J. Funct. Anal.} 207:161–215, 2004.
 
\bibitem{BFP20a}  S. Bordoni, R. Filippucci and P. Pucci. Existence of solutions in problems on Heisenberg groups involving Hardy and critical terms. {\em J. Geom. Anal.}, 30(2):1887--1917, 2020.

\bibitem{BF13} T. H. Branson, L. Fontana and C. Morpurgo. Moser-{T}rudinger and {B}eckner-{O}nofri's inequalities on the {CR} sphere. \newblock {\it  Ann. of Math. (2)} 177(1):1--52, 2013.




 
\bibitem{BL83}
H.~Brezis and E.~Lieb.
\newblock A relation between pointwise convergence of functions and convergence of functionals.
\newblock {\em Proceedings of the American Mathematical Society}, 88(3):486, 1983.

\bibitem{BN83}
H.~Brezis and L.~Nirenberg.
\newblock Positive solutions of nonlinear elliptic equations involving critical {S}obolev exponents.
\newblock {\em Communications on Pure and Applied Mathematics}, 36(4):437--477, 1983.


  
  





\bibitem{CFP85} A. Capozzi, D. Fortunato and G. Palmieri, An existence result for nonlinear elliptic problems involving
critical Sobolev exponent. \newblock {\em  Ann. Inst. H. Poincar´e Anal. Non Lin´eaire 2} 463–470, 1985.
 



 \bibitem{CW17} Y.-H. Chen and Y. Wang. \newblock Perturbation of the CR fractional Yamabe problem. {\em Math. Nachr.}, 290(4):534–545,  2017.




  
  \bibitem{C98} D. Christodoulou. \newblock On the geometry and dynamics of crystalline continua. {\em Ann. Inst. H. Poincaré}. 69:335–358, 1998.

  \bibitem{Citti95} G. Citti. Semilinear Dirichlet problem involving critical exponent for the Kohn Laplacian. {\em Ann. Mat. Pura. Appl.} 169:375–392 .1995.
  
  \bibitem{CGS04} G. Citti, G. Manfredini and A. Sarti. \newblock Neuronal oscillations in the visual cortex: $\Gamma$-
convergence to the Riemannian Mumford–Shah functional. {\em SIAM J. Math. Anal.} 35:1394-1419, 2004.

  \bibitem{DGN07} Danielli, N. Garofalo and D. Nhieu. \newblock Sub-Riemannian calculus on hypersurfaces in
Carnot groups. {\em Adv. Math.} 215:292–378, 2007. 




 







\bibitem{DNK23} N. De Nitti and T. K\"onig. 
\newblock Critical functions and blow-up asymptotics for the fractional Brezis–Nirenberg problem in low dimension. {\it Calculus of Variations and Partial Differential Equations}, 62(4):114, 2023.



\bibitem{FZ24} Y. Fang and C. Zhang. 
\newblock Regularity theory for nonlocal equations with general growth in the Heisenberg group. {\it International Mathematics Research Notices}, rnae072, 2024.

\bibitem{FF15} F. Faraci and C. Farkas. A quasilinear elliptic problem involving critical Sobolev exponents. {\em  Collect. Math.} 66:243–259, 2015.

 \bibitem{FMMT15} R. L. Frank, M. del Mar Gonz\`alez, D. D. Monticelli and J. Tan. \newblock  An extension problem for the CR fractional Laplacian. {\em Adv. Math.} 270:97--137, 2015.

\bibitem{FF2015}
F. Ferrari and B. Franchi. \newblock Harnack inequality for fractional sub-Laplacians in Carnot groups. \newblock {\em Mathematische Zeitschrift}, 279(1):435--458, 2015.

\bibitem{FMPPS18} F. Ferrari, M. Miranda Jr, D. Pallara, A. Pinamonti, Y. Sire: Fractional Laplacians, perimeters and heat semigroups in Carnot groups. {\em Discrete Cont. Dyn. Sys - Series S} 11:477-491, 2018.





\bibitem{FBR17}
M. Ferrara, G. M. Bisci and D. Repov$\check{\text{s}}$. \newblock Nonlinear elliptic equations on Carnot groups. \newblock {\em RACSAM}, 111(3):707--718, 2017.


\bibitem{FR16}
V.~Fischer and M.~Ruzhansky.
\newblock {\em Quantization on Nilpotent Lie Groups, Progress in Mathematics,
  vol. 314}.
\newblock Springer International Publishing, Birkh\"{a}user, 2016.

\bibitem{F75}
G.~B. Folland.
\newblock Subelliptic estimates and function spaces on nilpotent {L}ie groups.
\newblock {\em Arkiv f\"{o}r Matematik}, 13(1-2):161--207, 1975.

 \bibitem{F77} G. B. Folland. \newblock On the Rothschild–Stein lifting theorem. {\em Comm. Partial Differential
Equations} 2(2):165-191, 1977. 

\bibitem{FS82}
G.~B. Folland and E.~M. Stein.
\newblock {\em Hardy Spaces on Homogeneous Groups, Mathematical Notes}.
\newblock vol. 28, Princeton University Press, Princeton, N.J., 1982.




\bibitem{FL}  R. L. Frank and E. H. Lieb. Sharp constants in several inequalities on the Heisenberg group. {\em Ann. Math.} 176:349–381, 2012.







\bibitem{GL92}
N. Garofalo and E. Lanconelli.
\newblock Existence and nonexistence results for semilinear equations on the Heisenberg group.
\newblock {\em Indiana Univ. Math. J.}, 41(1):71--98, 1992.

\bibitem{GV00}
N.~Garofalo and D.~Vassilev.
\newblock Regularity near the characteristic set in the non-linear {D}irichlet problem and conformal geometry of {S}ub-{L}aplacians on {C}arnot groups.
\newblock {\em Mathematische Annalen}, 318(3):453--516, 2000.

\bibitem{GT21} N. Garofalo and G. Tralli. Feeling the heat in a group of Heisenberg type. {\it Adv. Math.} 381:Art. 107635, 2021.

\bibitem{GLV} N. Garofalo, A. Loiudice and D. Vassilev. Optimal decay for solutions of nonlocal semilinear equations with critical exponent in homogeneous group. {\it Proceedings of the Royal Society of Edinburgh: Section A Mathematics.}, 2024.  doi:10.1017/prm.2024.58 

\bibitem{GKR22}
S.~Ghosh, V.~Kumar and M.~Ruzhansky.
\newblock Compact Embeddings, Eigenvalue Problems, and subelliptic {B}rezis-{N}irenberg equations involving singularity on stratified Lie groups. {\em Math. Ann.} 388:4201–4249 2024.


\bibitem{GKR23} S.~Ghosh, V.~Kumar and M.~Ruzhansky.
\newblock Best constants in subelliptic fractional  Sobolev and Gagliardo-Nirenberg inequalities and ground states on stratified Lie groups. 2023, \href{https://doi.org/10.48550/arXiv.2306.07657}{Arxiv preprint: 2306.07657}.







\bibitem{GQ13} M. d. M. Gonz\'alez and J. Qing.  \newblock Fractional conformal Laplacians and fractional Yamabe problems. \newblock {\em Anal. PDE} 6(7): 1535–1576, 2013.






\bibitem{GLPY21} Y. Guo, B. Li, A. Pistoia and S. Yan.
\newblock The fractional Brezis-Nirenberg problems on lower dimensions.
\newblock {\em Journal of Differential Equations}, 286:284--331, 2021.

\bibitem{HBV16} H. Hajaiej, G. Molica Bisci and L. Vilasi.
\newblock Existence results for a critical fractional equation.
\newblock {\em Asymptotic Analysis}, 100(3-4):209-225, 2016.

\bibitem{H20}
Y. Han. \newblock An integral type Brezis-Nirenberg problem on the Heisenberg group. \newblock {\em Journal of Differential Equations}, 269(5):4544--4565, 2020.













 \bibitem{JL} D. S. Jerison and J. M. Lee. The Yamabe problem on CR manifolds. {\em J. Differential Geom.}, 25 :167-197, 1987.

 \bibitem{JL89} D. S. Jerison and J. M. Lee. Extremals of the Sobolev inequality on the Heisenberg group and the CR Yamabe problem. \newblock {\em J. of Diff. Geom.}, 29:303-343, 1989.



\bibitem{KD20}
A.~Kassymov and D.~Suragan.
\newblock Lyapunov-type inequalities for the fractional $p$-sub-{L}aplacian.
\newblock {\em Advances in Operator Theory}, 5(2):435--452, 2020.

 \bibitem{KMW17} S. Kim, M. Musso and J. Wei.  \newblock  Existence theorems of the fractional Yamabe problem. {\em Analysis \& PDE}, 11(1):75-113, 2017.


 \bibitem{K20} A. Krist\'aly. \newblock Nodal solutions for the fractional Yamabe problem on Heisenberg groups. {\em Proceedings of the Royal Society of Edinburgh Section A: Mathematics}, 150(2):771-788, 2020.  





  \bibitem{Lan03} E. Lanconelli. Nonlinear equations on Carnot groups and curvature problems for CR manifolds. {\em Rend.
Mat. Acc. Lincei} 14:227–238, 2003.
  

  

\bibitem{L90}
P. Lindqvist.
\newblock On the equation $div\left(|\nabla u|^{p-2}\nabla u\right)+\lambda|u|^{p-2}u=0$.
\newblock {\em Proceedings of the American Mathematical Society}, 
109(1):157--164, 1990.

\bibitem{Lions85} P. L. Lions. The concentration-compactness principle in the calculus of variations: the limiting case (Parts I and II). {\em  Rev. Mat. Iberoam 1}, 145–201:45–121, 1985



\bibitem{Loi22} A. Loiudice, A.  Existence Results for Critical Problems Involving p-Sub-Laplacians on Carnot Groups. In: Ruzhansky, M., Wirth, J. (eds) Harmonic Analysis and Partial Differential Equations. Trends in Mathematics. Birkhäuser, Cham, 2022.




\bibitem{L07}
A. Loiudice. \newblock Semilinear subelliptic problems with critical growth on Carnot groups. \newblock {\em Manuscripta Mathematica}, 124(2):247--259, 2007.


\bibitem{L19}
A. Loiudice. \newblock Optimal decay of $p$-Sobolev extremals on Carnot groups. \newblock {\em Journal of Mathematical Analysis and Applications}, 470(1):619--631, 2019.








\bibitem{MM17} J. Mawhin and G. Molica Bisci, A Brezis-Nirenberg type result for a nonlocal fractional operator. {\it J. Lond. Math. Soc. (2)}, 95(1):77-93, 2017.


\bibitem{MM07} J. J. Manfredi and G. Mingione. Regularity results for quasilinear elliptic equations in the Heisenberg group.
{\em Math. Ann.} 339:485–544, 2007.

\bibitem{BR17}
G. Molica Bisci and D. Repovs. \newblock Yamabe type equations on Carnot groups. \newblock {\em Potential Analysis}, 46(2):369--383, 2017.


\bibitem{BS15}
G.~Molica Bisci and R.~Servadei.
\newblock A {B}rezis-{N}irenberg splitting approach for nonlocal
  fractional equations.
\newblock {\em Nonlinear Analysis: Theory, Methods {\&} Applications},
  119:341--353, 2015.
  
\bibitem{MBF16} G. Molica Bisci and M. Ferrara. Subelliptic and parametric equations on Carnot groups. {\em Proc. Am. Math.
Soc.} 144:3035–3045, 2016.

\bibitem{MRS16} G. Molica Bisci, V. R$\check{\text{a}}$dulescu and R. Servadei. Variational methods for nonlocal fractional problems,
Encyclopedia of Mathematics and Its Applications 142 (Cambridge University Press, Cambridge), 2016

\bibitem{MMPP23} M. Manfredini, G. Palatucci, M. Piccinini, S. Polidoro. Hölder continuity and boundedness
estimates for nonlinear fractional equations in the Heisenberg group. {\em J. Geom. Anal.} 33(3),Art. 77, 2023.

\bibitem{MPSY16} S. Mosconi, K. Perera, M. Squassina and Y. Yang. \newblock The Brezis-Nirenberg problem for the fractional 
$p$-Laplacian. {\em Calc. Var. Partial Differential Equations}, 55, Art. 105, 2016.





\bibitem{PP24} G. Palatucci and M. Piccinini. Nonlinear fractional equations in the Heisenberg group,
{\it Bruno Pini Math. Anal. Semin.}, 2024.

\bibitem{PP22} G. Palatucci and M. Piccinini. \newblock Nonlocal Harnack inequalities in the Heisenberg group. {\em Calc. Var. Partial Differential Equations}, 61(5):185, 30 pp,  2022.

\bibitem{PP14} G. Palatucci and A. Pisante. \newblock Improved Sobolev embeddings, profile decomposition, and concentration-compactness for fractional Sobolev spaces. {\em Calc. Var. Partial Differential Equations}, 50(3--4):799--829,  2014.




\bibitem{Pic22} M. Piccinini,  The obstacle problem and the Perron Method for nonlinear fractional equations in
the Heisenberg group. {\em Nonlinear Anal.} 222:Art. 112966, 2022.

\bibitem{PT22} P. Pucci and L. Temperini. Entire solutions for some critical equations in the Heisenberg group. {\em Opuscula Math.}, 42(2):279--303, 2022.











\bibitem{RT16} 
L. Roncal and S. Thangavelu. 
\newblock Hardy’s inequality for fractional powers of the sub-{L}aplacian on the {H}eisenberg group.
\newblock {\em Adv. Math.} 302:106--158, 2016.


\bibitem{Ric00} B. Ricceri. A general variational principle and some of its applications, fixed point theory with applications
in nonlinear analysis. {\em J. Comput. Appl. Math.} 113:401–410, 2000.

\bibitem{RT20} L. Roncal and S. Thangavelu.  An extension problem and trace {H}ardy inequality for the sub-{L}aplacian on {H}-type groups. \newblock {\it Int. Math. Res. Not. IMRN},  14:4238--4294, 2020.

\bibitem{RS76} L. P. Rothschild and E. M.  Stein,  Hypoelliptic differential operators and nilpotent groups. {\em Acta Math.} 137(3-4):247–320, 1976.

 \bibitem{Roth83} L. P. Rothschild. \newblock A remark on hypoellipticity of homogeneous invariant differential
operators on nilpotent Lie groups. {\em Comm. Partial Differential Equations} 8(15):1679–1682,
1983. 





\bibitem{RS19}
M. Ruzhansky and D. Suragan.
\newblock Hardy inequalities on homogeneous groups: 100 years of Hardy inequalities, 
\newblock Progress in Mathematics, Vol. 327, Birkha\"{u}ser, 2019. xvi+588pp.

\bibitem{RTY20}
M. Ruzhansky, N. Tokmagambetov and N. Yessirkegenov.
\newblock Best constants in {S}obolev and {G}agliardo-{N}irenberg inequalities on graded groups and ground states for higher order nonlinear subelliptic equations,
\newblock {\em Calc. Var. Partial Differential Equations}, 59(5):1--23, paper no. 175, 2020.



 \bibitem{CHY21} H. Chtioui, H. Hajaiej and R. Yacoub. \newblock Concentration phenomena for fractional CR Yamabe type flows. {\em Differential Geom. Appl.} 79:101803, 27 pp., 2021.  






\bibitem{SER13} R. Servadei.
\newblock The Yamabe equation in a non-local setting. {\em 
Adv. Nonlinear Anal.} 2:235--270, 2013.

\bibitem{SER14} R. Servadei.
\newblock A critical fractional Laplace equation in the resonant case. {\em 
Topol. Methods Nonlinear Anal.} 43:251--267, 2014.


\bibitem{SERV05} R. Servadei and E. Valdinoci.
\newblock The Brezis-Nirenberg result for the fractional Laplacian. {\em 
Trans. Am. Math. Soc.} 367:67--102, 2005.

\bibitem{SERV13} R. Servadei and E. Valdinoci.
\newblock A Brezis-Nirenberg result for non-local critical equations in low dimension. {\em 
Commun. Pure Appl. Anal.} 12:2445--2464, 2013


\bibitem{Squ04} M. Squassina. 
\newblock Two solutions for inhomogeneous nonlinear elliptic equations at critical growth. {\em 
Nonlinear Differ. Equ. Appl.} 11:53–71, 2004.



\bibitem{Shen24} Y. Shen. 
\newblock The Brezis-Nirenberg Problem for Fractional $p$-Laplacian Systems in Unbounded Domains {\em 
Results in Mathematics.} 79(4):163, 2024.








\bibitem{WS22} X.~ Wang and Y. Shen. 
\newblock Existence results for fractional Brezis-Nirenberg type problems in unbounded domains. \newblock {\em Topol. Methods Nonlinear Anal.} 60(2: 517--546, 2022.



\bibitem{WD20} X.~ Wang and G. Du. 
\newblock Properties of solutions to fractional $p$-sub-{L}aplace equations on the {H}eisenberg group. \newblock {\em Bound. Value Probl.}, Paper No. 128, 15 pp, 2020.

\bibitem{WN19}	X. Wang and P. Niu. 
\newblock Properties for nonlinear fractional sub-{L}aplace equations on the {H}eisenberg group. {\em J. Partial Differ. Equ.} 32(1):66–76, 2019.



\bibitem{WM97}
M. Willem. 
\newblock Minimax {T}heorems, 24 
\newblock {\em Springer, Berlin}, 1997.


  

\end{thebibliography}

\end{document}